%%%%%%%%%%%%%%%%%%%%%%%%%%%%%%%%%%%%%%%%%%%%%%%%%%%%%%%%%%%%%%%%%%%%%%%%%%%%
%% Author template for Operations Research (opre) for articles with no e-companion (EC)
%% Mirko Janc, Ph.D., INFORMS, mirko.janc@informs.org
%% ver. 0.95, December 2010
%%%%%%%%%%%%%%%%%%%%%%%%%%%%%%%%%%%%%%%%%%%%%%%%%%%%%%%%%%%%%%%%%%%%%%%%%%%%
% COMPILE from arxiv-submission/ directory
% Self-contained package for arXiv submission
% All dependencies included locally

% \documentclass[opre, blindrev]{informs3}
\documentclass[opre, nonblindrev]{informs3} % current default for manuscript submission

% \usepackage{showlabels}

% \DoubleSpacedXI % Made default 4/4/2014 at request
\OneAndAHalfSpacedXI % current default line spacing
% \documentclass[mnsc, blindrev]{informs3} % current default for manuscript submission

% \DoubleSpacedXI % Made default 4/4/2014 at request
% \OneAndAHalfSpacedXI
\usepackage{graphicx}
\graphicspath{{figs/}}  % All figures included locally for arXiv submission

\usepackage[english]{babel}
\usepackage[utf8]{inputenc}
\usepackage{amsmath, amsfonts, amssymb}
\allowdisplaybreaks
\usepackage{parskip}
\usepackage{url}
\usepackage{graphicx}
\usepackage{setspace}
\usepackage{subfigure}
% \usepackage[inline]{showlabels}
% \usepackage{algorithm}
% \usepackage{algorithmicx}
% \usepackage{algpseudocode}
% \let\footnote=\endnote

%

% Private macros here (check that there is no clash with the style)

% Natbib setup for author-year style
\usepackage{natbib}
 \bibpunct[, ]{(}{)}{,}{a}{}{,}%
 %
 %
 % \setlength{\bibsep}{0.8ex}
 %
 %
 %

%% Setup of theorem styles. Outcomment only one.
%% Preferred default is the first option.
\TheoremsNumberedThrough     % Preferred (Theorem 1, Lemma 1, Theorem 2)
%\TheoremsNumberedByChapter  % (Theorem 1.1, Lema 1.1, Theorem 1.2)
\ECRepeatTheorems
\usepackage{bbm}
\usepackage{tikz}
\usetikzlibrary{arrows.meta, positioning}
\usetikzlibrary{decorations.pathreplacing}
\usetikzlibrary{shapes.geometric}  % Required for diamond shape

\usepackage[unicode=true, bookmarks=false,
breaklinks=false,pdfborder={0 0 1}]
{hyperref}

\hypersetup{
 colorlinks,citecolor=blue,filecolor=blue,linkcolor=blue,urlcolor=blue}
\usepackage{booktabs}
\usepackage{tabularx}
\usepackage{enumitem}  % For list customization (used in OR version)

% \setstretch{1.5}

\newcommand{\prn}[1]{\left({#1}\right)} % parentheses
 % parentheses
 % parentheses
\newcommand{\brk}[1]{\left[{#1}\right]} % bracket

\newcommand{\regret}{\mathrm{Regret}}

\newcommand{\Fi}{\mathcal F_{\mathrm{I}}}
\newcommand{\Fii}{\mathcal F_{\mathrm{II}}}
\newcommand{\loss}{\ell}

\newcommand{\Lossi}{\mathcal L_{\mathrm{I}}}
\newcommand{\Lossii}{\mathcal L_{\mathrm{II}}}

\newcommand{\R}{\mathbb{R}}
\newcommand{\E}{\mathbb{E}}

\renewcommand{\P}{\mathbb{P}}

\newcommand{\real}{\mathbb{R}}

\newcommand{\x}{\boldsymbol{x}}
\newcommand{\y}{\boldsymbol{y}}

\newcommand{\HO}{\textbf{HO}}

\newcommand{\mix}{\tilde{\pi}}

\newcommand{\pow}{f}

\definecolor{arc}{RGB}{128,128,255}

% Margins
\usepackage[top=2.5cm, left=3cm, right=3cm, bottom=4.0cm]{geometry}
% Colour table cells
% \usepackage[table]{xcolor}
% \usepackage[table,usenames,dvipsnames]{xcolor}

% Get larger line spacing in table

         % = `top' strut
         % = `top' strut
   % = `bottom' strut

% \newtheorem{proposition}{Proposition}
% \newtheorem{definition}{Definition}
% % \newtheorem{property}{Property}
% % \newtheorem{corollary}{Corollary}
% \newtheorem{remark}{Remark}
% % \newtheorem{proof}{Proof}
% % \newtheorem{example}{Example}
% \newtheorem{assumption}{Assumption}
% \newtheorem{Proposition}{Proposition}
% \newtheorem{Definition}{Definition}
% \newtheorem{Corollary}{Corollary}
% \newtheorem{remark}{Remark}
% \newtheorem{assumption}{Assumption}
% \newtheorem{Condition}{Condition}
% \newtheorem{condition}{Condition}
% \newtheorem{theorem}{Theorem}
% \newtheorem{lemma}{Lemma}
% \newtheorem{example}{Example}

\newcommand*{\circled}[1]{\lower.7ex\hbox{\tikz\draw (0pt, 0pt)%
    circle (.5em) node {\makebox[1em][c]{\small #1}};}}
\usepackage{algorithm}
\usepackage{algorithmicx}
\usepackage{algpseudocode}
 % Use Input in the format of Algorithm
 % Use Output in the format of Algorithm
\algnewcommand{\algorithmicand}{\textbf{and }}
\algnewcommand{\algorithmicor}{\textbf{or }}
\algnewcommand{\OR}{\algorithmicor}
\algnewcommand{\AND}{\algorithmicand}
\makeatletter
\newenvironment{breakablealgorithm}
  {% \begin{breakablealgorithm}
  \begin{center}
     \refstepcounter{algorithm}% New algorithm
     \hrule height.8pt depth0pt \kern2pt% \@fs@pre for \@fs@ruled
     \renewcommand{\caption}[2][\relax]{% Make a new \caption
      {\raggedright\textbf{\ALG@name~\thealgorithm} ##2\par}%
      \ifx\relax##1\relax % #1 is \relax
         \addcontentsline{loa}{algorithm}{\protect\numberline{\thealgorithm}##2}%
      \else % #1 is not \relax
         \addcontentsline{loa}{algorithm}{\protect\numberline{\thealgorithm}##1}%
      \fi
      \kern2pt\hrule\kern2pt
     }
  }{% \end{breakablealgorithm}
     \kern2pt\hrule\relax% \@fs@post for \@fs@ruled
  \end{center}
  }

%%%%%%%%%%%%%%%%%
%     Title     %
%%%%%%%%%%%%%%%%%
% \title{\huge Online Local False Discovery Rate Control:\texorpdfstring{\\}{} A Resource Allocation Approach}
% \title{Overbooking in online reusable resource allocation}
% \author{}
% \date{}

\begin{document}
%%%%%%%%%%%%%%%%

% Outcomment only when entries are known. Otherwise leave as is and
%   default values will be used.
%\setcounter{page}{1}
%\VOLUME{00}%
%\NO{0}%
%\MONTH{Xxxxx}% (month or a similar seasonal id)
%\YEAR{0000}% e.g., 2005
%\FIRSTPAGE{000}%
%\LASTPAGE{000}%
%\SHORTYEAR{00}% shortened year (two-digit)
%\ISSUE{0000} %
%\LONGFIRSTPAGE{0001} %
%\DOI{10.1287/xxxx.0000.0000}%

% Author's names for the running heads
% Sample depending on the number of authors;
% \RUNAUTHOR{Jones}
% \RUNAUTHOR{Jones and Wilson}
% \RUNAUTHOR{Jones, Miller, and Wilson}
% \RUNAUTHOR{Jones et al.} % for four or more authors
% Enter authors following the given pattern:
\RUNAUTHOR{Ao, Fu, Simchi-Levi}

% Title or shortened title suitable for running heads. Sample:
% \RUNTITLE{Bundling Information Goods of Decreasing Value}
% Enter the (shortened) title:
\RUNTITLE{Two-stage Online Reusable Resource Allocation: Reservation, Overbooking and Confirmation Call}

% Full title. Sample:
% \TITLE{Bundling Information Goods of Decreasing Value}
% Enter the full title:

\TITLE{ Two-stage Online Reusable Resource Allocation: \texorpdfstring{\\}{} Reservation, Overbooking and Confirmation Call 
% \arc{do you think Confirmation or Confirmation Timing is better}
}

% \TITLE{A Simple and Optimal Multi-armed Bandit Policy Design with Safety against Heavy-tailed Risk }

% Block of authors and their affiliations starts here:
% NOTE: Authors with same affiliation, if the order of authors allows,
%   should be entered in ONE field, separated by a comma.
%   \EMAIL field can be repeated if more than one author
\ARTICLEAUTHORS{%
\AUTHOR{Ruicheng Ao\textsuperscript{1} \quad\quad Hengyu Fu\textsuperscript{2} \quad\quad David Simchi-Levi\textsuperscript{1,3,4} }
\AFF{\textsuperscript{1}Institute for Data, Systems, and Society, Massachusetts Institute of Technology, Cambridge, MA 02139, \EMAIL{\texttt{\{aorc, dslevi\}@mit.edu}}\texorpdfstring{\\}
{}\textsuperscript{2}School of Mathematical Sciences, Peking University, Beijing 100871, \EMAIL{\texttt{2100010881@stu.pku.edu.cn}}\texorpdfstring{\\}
{}\textsuperscript{3}Department of Civil and Environmental Engineering, Massachusetts Institute of Technology, Cambridge, MA 02139\texorpdfstring{\\}{}\textsuperscript{4}Operations Research Center, Massachusetts Institute of Technology, Cambridge, MA 02139 } %, \URL{}}
% Enter all authors
}

\ABSTRACT{
% Abstract
Service industries must balance overbooking risks against idle capacity: overbooking causes costly rejections of reserved customers, while underbooking leaves capacity unused. The challenge intensifies when decisions must be made online amid uncertain cancellations, multi-day occupancy durations, and competing walk-in demand. We study a two-stage online model where Stage I manages advance reservations and overbooking, while Stage II handles check-ins from both reserved and walk-in customers. The decision maker can schedule confirmation calls to refine no-show predictions, but must immediately accept or reject each request without knowledge of future arrivals. Cancellation uncertainties and cross-day correlations in occupancy cause any online policy to incur $\Omega(T)$ regret without sufficient walk-in demand (a \textit{busy season}). We develop \textit{decoupled adaptive safety stocks} that hedge overbooking risks using only single-day information. Under busy season conditions, our policy achieves $O(1)$ regret by decoupling the offline optimal into independent single-day problems. Regret decays exponentially with confirmation timing: delaying confirmation by just $O(\log T)$ time suffices for near-optimal performance without sacrificing customer satisfaction. Numerical experiments on synthetic and real hotel booking data from Algarve reveal a sharp phase transition in performance as confirmation timing varies, confirming our theoretical predictions.

}
% Fill in data. If unknown, outcomment the field
\KEYWORDS{resource allocation, reusable resources, online algorithms, advance reservation, walk-in customer, overbooking, confirmation call}
% \HISTORY{\today.}

\maketitle

\section{Introduction}
Service industries—healthcare, hospitality, transportation, and cloud computing—must balance two objectives: avoiding rejections of reserved customers while minimizing idle capacity. This balance is typically achieved through overbooking and accommodating walk-in customers.

Outpatient medical clinics and car rental companies exemplify these complexities. Each examination is scheduled for a specific time slot, and any unfilled appointment results in lost revenue. To reduce idle time given high no-show rates, clinics often overbook patients. On favorable days, this allows all patients to be seen by adjusting schedules or managing short waiting times—\citet{kros2009overbooking} demonstrated that overbooking in clinic scheduling can reduce idle time and achieve cost savings of up to 15\% in a field study. However, on busier days, overbooking can lead to patient dissatisfaction and increased workloads for medical staff. Similarly, car rental firms often overbook reservations when anticipating cancellations or no-shows. When actual demand exceeds expectations, customers may wait for another vehicle or be directed to a different location. Meanwhile, accommodating walk-in customers who require immediate vehicle access—a common service on platforms such as Uber\footnote{\url{https://m.uber.com/go/rent}}—provides flexibility to make lower overbooking levels and reduce risks, but necessitates real-time capacity management.

Beyond healthcare and rentals, hotels and cloud services face similar challenges. Hotels regularly overbook rooms, anticipating cancellations or no-shows to increase revenue. While this strategy typically improves profitability, it can lead to guests arriving to find no available rooms. Hotels must then arrange alternative accommodations, incurring extra costs and potentially harming customer satisfaction and loyalty. Cloud computing services face similar challenges: managing server capacity to handle both scheduled and unexpected user demand is crucial for maintaining service reliability. Effective capacity management requires not only strategic overbooking but also reliable confirmation processes to predict and respond to demand changes. These industries must balance reserved bookings with walk-in demand to maintain service quality and operational efficiency.

Confirmation calls allow firms to better predict no-shows and adjust overbooking levels. By contacting customers before service to verify their plans, firms gain valuable information about demand. Many hotels confirm arrivals 24 to 48 hours before check-in \citep{chen2016cancellation}—a practice reflected in policies at Booking\footnote{\url{https://www.booking.com/content/terms.html}} and Airbnb.\footnote{\url{https://www.airbnb.com/help/article/149}} In healthcare, appointment reminders reduce no-shows and improve utilization \citep{almog2003reduction}. Confirmation timing involves trade-offs: too early, and plans may change; too late, and there is insufficient time to respond to cancellations. Despite these challenges, confirmations provide valuable information for overbooking decisions.

Existing models typically focus on either overbooking or online resource allocation, but rarely their joint control with confirmation processes and walk-in demand. We lack a unified model integrating these elements with rigorous sensitivity analysis of their interactions.

We study a two-stage online model incorporating reservations, walk-ins, overbooking, and confirmations. Stage I manages advance reservations and overbooking; Stage II handles check-ins from both reserved and walk-in customers. Confirmation calls occur before service ends to refine no-show predictions and adjust overbooking. Section 2 formalizes the model. Section 3 establishes the offline benchmark for regret analysis. Sections 4 and 5 present our online policies and theoretical guarantees. Section 6 extends the results to heterogeneous customers with nested capacity protection. Section 7 provides numerical validation, and Section 8 concludes.

\subsection{Main Contributions}

\noindent\textbf{1. Model and Problem Formulation.} We develop a continuous-time two-stage model integrating reservations (Stage I), check-ins (Stage II), overbooking, and confirmation calls. Arrivals and cancellations are random over a $T$-day horizon with time-varying rates. The DM optimizes revenue by balancing overbooking risks against idle capacity. This framework enables comparison of online policies and confirmation schedules.

\noindent\textbf{2. Theoretical Results.} Cancellation uncertainties and inter-day correlations in occupancy create fundamental challenges. We establish four main results:
\begin{enumerate}[label=(\alph*), leftmargin=*]
    \item \textit{Lower bound without busy season.} When walk-in demand is insufficient, \emph{any} policy incurs $\Omega(T)$ regret (Proposition~\ref{prop:lower_bound}). This formally characterizes the minimal walk-in rate threshold for constant regret.

    \item \textit{Upper bound under busy season.} Our DASS policy achieves $O(1)$ regret via confidence-based safety stocks that decouple multi-day dependencies (Theorem~\ref{prop::StageI regret}). The decoupling reduces a $T$-dimensional problem to $T$ independent single-day problems.

    \item \textit{Confirmation timing sensitivity.} Regret decays \emph{exponentially} with post-confirmation time; $O(\log T)$ time suffices for near-optimal performance (Theorem~\ref{prop:HO_gap}). This enables mid-day confirmation without performance loss.

    \item \textit{No capacity scaling.} Unlike prior work requiring $C = \Omega(T)$ for sublinear regret, we achieve $O(1)$ regret with \emph{fixed} capacity—the first such result for reusable resource allocation with overbooking.
\end{enumerate}

\noindent\textbf{3. Empirical Validation.} Numerical experiments reveal a phase transition in regret as confirmation timing varies, validating our sensitivity analysis. Using synthetic data and hotel booking data from Algarve, our algorithm substantially outperforms overbooking strategies without confidence-based safeguards.

\subsection{Related Work}

\noindent\textbf{Overbooking.} Overbooking research spans from classical static models to modern dynamic and online approaches. Foundational work developed static models balancing revenue against no-show risks \citep{beckmann1958decision, thompson1961statistical, rothstein1971airline, liberman1978hotel, belobaba1987air}. More recent approaches apply dynamic programming or linear programming \citep{karaesmen2004overbooking, kunnumkal2012randomized, aydin2013single, dai2019network, ekbatani2022online, freund2023overbooking}; see \citet{freund2023overbooking} for a comprehensive survey. Closer to our setting is work on service systems with occupancy durations and walk-in customers: healthcare \citep{williams1977decision, shonick1977approach, vissers1979selecting, laganga2007clinic, kros2009overbooking}, hospitality and rentals \citep{geraghty1997revenue, kim2006stochastic, birkenheuer2011reservation, alexandrov2012reservations}, and restaurants \citep{tomas2013improving, caglar2014ioverbook, tse2017modeling, roy2022restaurant}. Recent extensions address stochastic durations and dynamic walk-ins \citep{liu2023managing, yang2023optimal, xiao2024robust}. Our work differs by jointly modeling reservation and check-in stages in continuous time while providing near-optimal regret guarantees.

\noindent\textbf{Online Resource Allocation.} Our analysis framework relates to online algorithms benchmarked against offline optima \citep{reiman2008asymptotically, jasin2012re, jasin2013analysis, ferreira2018online, bumpensanti2020re, banerjee2020uniform, vera2021bayesian, jiang2022degeneracy, zhu2023assign, jaillet2024should}; see \citet{balseiro2023survey} for a survey. Particularly relevant is reusable resource allocation, where resources return after occupancy periods \citep{levi2010provably, chen2017revenue, chawla2017stability, owen2018price, rusmevichientong2020dynamic, gong2022online, zhang2022online, balseiro2023dynamic, rusmevichientong2023revenue, simchi2023greedy, dong2024value}. Existing work typically measures performance via competitive ratios under i.i.d. arrivals, requiring capacity to scale with the horizon to achieve sublinear online-offline gaps. In contrast, we achieve $O(1)$ regret without this requirement.

Our problem differs in two key aspects. First, we allow overbooking throughout, introducing additional allocation uncertainty. Second, we achieve $O(1)$ regret without requiring large-capacity asymptotics—walk-in arrivals provide sufficient information to decouple inter-day dependencies. Our Stage I policy employs decoupling techniques similar to inventory control \citep{zipkin2000foundations, miao2022asymptotically, chao2024adaptive}, while Stage II builds on prediction-based methods for settings with occupancy durations and state-dependent arrivals \citep{jasin2012re, vera2021bayesian, feng2021online, zhang2022online, freund2023overbooking, jiang2023constant, ao2024online}.

\noindent\textbf{Confirmation Calls.} Confirmation calls are widely used in reservation-based services to reduce overbooking risks and idle capacity \citep{christensen2001effect, almog2003reduction, stubbs2012methods, chen2016cancellation}. At confirmation, the DM verifies whether reserved customers will arrive. Firms may cancel unconfirmed reservations or impose penalties \citep{chen2016cancellation}. Confirmations can occur before service begins or partway through the service period. After confirmation, the DM has complete information about reservation status, effectively transitioning from online to offline decision-making. \citet{xie2023benefits} study a related ``decision delay'' concept where the DM observes all arrivals up to time $t+d$ when deciding at time $t$. Our setting differs: the DM has no future arrival information until confirmation. We analyze how confirmation timing affects regret, showing that a logarithmic interval between confirmation and service end suffices for near-optimal performance.

% The confirmation timing is an important component widely used in researvation-based service systems that can reduce the risks of overbooking and wastes of resource idling (e.g. \citealt{christensen2001effect}; \citealt{almog2003reduction}; \citealt{stubbs2012methods}; \citealt{chen2016cancellation}). At this time point, the decision maker will make confirmation whether the customers with reservation ahead will come or not. Some decision maker may directly cancel the reservation for whom not arriving before this time or charge for penalties (see \citealt{chen2016cancellation} for possibilities). The confirmation can be made either in advance of the day / peiord's service begins or at some point before the whole service time ends. After the confirmation timing, decision maker has all future information for that day's upcoming reservation status, thus it serves as a bridge from ``online decision" to ``offline decision". A recent paper \citealt{xie2023benefits} describe a similar setting called ``decision delay". Our setting is different from theirs in that, when handling the online request at time $t$, the decision maker knows all future true arrivals up to $t+d$, where $d$ is the length of delay, while in our setting, the decision maker has no knowledge of realization of future arrivals until the confirmation timing. 

\subsection{Notations}
For integer $n\ge 1,$ we denote $[n]=\{1,2,\dots,n\}$ as the set of integers from $1$ to $n$. For $x\in\real$, denote $\lceil x\rceil$ as the smallest integer not smaller than $x$ and $\lfloor x\rfloor$ as the largest integer not greater than $x$. Denote $x_+=\max\{x, 0\}$. For set $S$, denote $|S|$ as its cardinality. For two functions $f(T)$ and $g(T)$, we use $f(T)=O(g(T))$ if there exists constant $c_1>0$ such that $f(T)\le c_1g(T)$ as $T\to+\infty$ and $f(T)=\Omega(g(T))$ if there exists constant $c_2>0$ such that $f(T)\ge c_2g(T)$ as $T\to +\infty$.

\section{Problem Setup}
We develop a two-stage reusable resource allocation model with overbooking. Figure~\ref{fig::timeline} illustrates the decision timeline for a single service day. Section \ref{sec::setup} presents the model framework. Section \ref{sec:arrival} describes the customer arrival process. Section \ref{sec:benchmark} formulates the online problem and establishes the benchmark for regret analysis. Table \ref{tab:notation_board} summarizes notation.

\begin{figure}[ht]
\centering
\begin{tikzpicture}[x=1cm, y=1cm]

% Draw timeline
\draw[ultra thick, -{Latex[length=3mm]}] (0,0) -- (14.2,0) node[anchor=north west] {Time};

% Stage labels
% \node[align=center] at (7, 3) {Decision Process for Service Beginning on Day $k$};
% \node[align=center] at (3, 2) {Stage I:\\ Reservations prior to Day $k$};
% \node[align=center] at (10.5, 2) {Stage II:\\ Check-in on Day $k$};
\node[align=center] at (11, 1.4) {Confirmation call};

% Red dot below confirmation call
\filldraw [red] (11,1.1) circle (2pt);

% Time markers
\foreach \x in {0, 6, 11} {
    \draw[thick] (\x, 0.1) -- (\x, -0.1);
}

% Labels for specific days and times
\node at (0, -0.5) {Day $k-k_0$};
\node at (6, -0.5) {Day $k$};
\node at (11, -0.5) {Time $k+v$};
\node at (13.6, -0.5) {Day $k+1$};

% Dashed lines to separate stages
\draw[dashed] (6, 1) -- (6, -0.2);
\draw[dashed] (11, 1) -- (11, -0.2);
\draw[dashed] (13.6, 1) -- (13.6, -0.2);

% Decision points
\node at (2.8,0.9) {Type I Customers};
\node at (8.5, 0.9) {Type I \& II Customers};
\node at (12.3, 0.9) {Final Decision};

% Trend of B_{s,t}
\draw[dotted, thick, red] plot[smooth, tension=0.8] coordinates {(0,0.0) (1,0.1) (2,0.2) (3,0.35) (4,0.6) (5,0.85) (6,1)};

% Trend of W_{u}
\draw[dotted, thick, blue] plot[smooth, tension=0.8] coordinates {(6,0.7) (7.2,0.4) (8.4,0.7) (9.6,0.4) (11,0.7)};
\draw[dotted, thick, blue] plot[smooth, tension=0.8] coordinates {(11,0.3) (12,0.2) (13.6,0.1)};

% Add braces for Stage I and Stage II
\draw [decorate, decoration={brace, mirror, amplitude=10pt}, thick] (0,-1) -- (6,-1)
    node [midway, below, yshift=-10pt] {Stage I: Reservations prior to Day $k$};
\draw [decorate, decoration={brace, mirror, amplitude=10pt}, thick] (6,-1) -- (13.6,-1)
    node [midway, below, yshift=-10pt] {Stage II: Check-in on Day $k$};

\end{tikzpicture}

\caption{Timeline of two-stage control for Day $k$. Reservations arrive during Stage I (days $k-k_0$ to $k$). On Day $k$, both reserved and walk-in customers arrive during Stage II. The confirmation call at time $k+\nu$ reveals which reservations will actually show up, enabling the manager to finalize capacity allocation.
 \label{fig::timeline}}
\end{figure}

\textbf{Illustrative example.} Consider a hotel, as depicted in Figure~\ref{fig::timeline}. Guests make reservations days or weeks in advance (Stage I); the red curve shows accepted reservations accumulating over time. Some guests cancel before their stay. On Day $k$, reserved guests check in while walk-in guests also arrive (Stage II); the blue curves depict walk-in traffic patterns. A confirmation call at time $k+\nu$---marked by the red dot---reveals which reservations will show up. The manager faces a fundamental trade-off: accept too few reservations and risk empty rooms if walk-ins are sparse; accept too many and risk overbooking if cancellations are fewer than expected. This tension between \emph{overbooking risk} and \emph{idle capacity risk} lies at the heart of our model.

% \subsection{Notations}
\subsection{The Two-stage Online Reusable Resource Allocation Model}\label{sec::setup}
\subsubsection{Continuous-time Framework}
The model operates over a continuous time horizon $[1-k_0, T+1]$, where $T$ denotes the total number of service days and $k_0 \in \mathbb{N}$ is the maximum advance booking window (e.g., $k_0 = 7$ for one-week advance reservations). For each day $k \in [T]$, the interval $(k, k+1)$ represents Day $k$ (e.g., 12:00 a.m. to 11:59 p.m.). Each day $k$ has its own two-stage structure: Stage I (reservations for Day $k$) spans $[k - k_0, k]$, while Stage II (check-ins on Day $k$) spans $(k, k+1)$. Decision windows for different days overlap---while Day $k$ is in Stage II, reservations for Days $k+1, k+2, \ldots$ are already arriving in their respective Stage I periods. Both stages have sequential arrivals. A real number $t$ denotes time $t - \lfloor t \rfloor$ on day $\lfloor t \rfloor$.

\subsubsection{Customer Types: Reservation and Walk-in}
Customers are classified into two types: advance reservations (\textbf{Type I}) and walk-ins (\textbf{Type II}). Figure~\ref{fig::customer_journey} depicts their distinct journeys through the two-stage process. Type I customers intending to check in on day $k \in [T]$ make reservations within $[k - k_0, k]$. Cancellations may occur before time $k+1$ with no penalty. Customers with uncancelled reservations arrive for check-in in Stage II. Type II customers arrive within $[k,k+1)$ (Stage II) without prior reservation. Both types may occupy resources for multiple days. The DM observes each customer's occupancy duration upon receiving the reservation or check-in request.
% A Type II customer arriving on Day $k$ will either walk in directly or make a same-day reservation within the interval $[k, k + 1]$. We assume that once a Type II customer submits their request, it cannot be cancelled, thereby confirming the resource occupancy to the DM. For convenience, we refer to all Type II customers as "walk-in" customers. Additionally, we assume that both types of customers can occupy resources for multiple days, and the DM knows a customer's occupancy duration when a reservation request is made or when a walk-in customer checks in directly.

\begin{figure}[ht]
\centering
\begin{tikzpicture}[
    box/.style={rectangle, draw, rounded corners, minimum width=2.2cm, minimum height=0.6cm, align=center, font=\small},
    decision/.style={diamond, draw, aspect=2.5, inner sep=1pt, align=center, font=\small},
    arrow/.style={-{Latex[length=2mm]}, thick},
    typeI/.style={draw=red!70!black, thick},
    typeII/.style={draw=blue!70!black, thick},
    stagebox/.style={fill=gray!8, draw=none}
]

% Stage I background
\fill[gray!12] (-6.5, -0.3) rectangle (6.5, -3.2);
\node[font=\scriptsize\bfseries, gray!60!black] at (-4.8, -0.6) {Stage I};

% Stage II background
\fill[gray!6] (-6.5, -3.2) rectangle (6.5, -7.5);
\node[font=\scriptsize\bfseries, gray!60!black] at (-4.8, -3.5) {Stage II};

% Header
\node[font=\small\bfseries] (header) at (0, 0.6) {Customer Arrivals};

% Type I branch (left) - red border
\node[box, typeI] (typeI) at (-2.8, -1.0) {Type I};
\node[right, font=\scriptsize, red!70!black, xshift=2pt] at (typeI.east) {$t \in [k{-}k_0, k]$};

\node[decision, typeI] (cancel) at (-2.8, -2.5) {\scriptsize Cancel?};
\node[font=\scriptsize\itshape, red!70!black] (exit1) at (-4.8, -2.5) {Exit};

% Type II branch (right) - blue border
\node[box, typeII] (typeII) at (2.8, -1.0) {Type II};
\node[right, font=\scriptsize, blue!70!black, align=left, xshift=2pt] at (typeII.east) {(not yet arrived)\\ \textit{waiting...}};

% Stage II elements
\node[box, typeI] (confirm) at (-2.8, -4.2) {Confirmation};
\node[right, font=\scriptsize, red!70!black, xshift=2pt] at (confirm.east) {at $k{+}\nu$};

\node[decision, typeI] (showup) at (-2.8, -5.8) {\scriptsize Show?};
\node[font=\scriptsize\itshape, red!70!black] (exit2) at (-4.8, -5.8) {Exit};

\node[box, typeII] (walkin) at (2.8, -4.2) {Walk-in};
\node[right, font=\scriptsize, blue!70!black, xshift=2pt] at (walkin.east) {$u \in [k, k{+}1)$};

% Check-in box (shared)
\node[box, minimum width=3.2cm, fill=white] (checkin) at (0, -7.0) {Check-in};

% Arrows - Type I (red)
\draw[arrow, red!70!black] (header) -- (-2.8, 0.2) -- (typeI);
\draw[arrow, red!70!black] (typeI.south) ++(0,-0.05) -- (cancel);
\draw[arrow, red!70!black] (cancel.west) -- node[above, font=\scriptsize] {Yes} (exit1);
\draw[arrow, red!70!black] (cancel.south) -- node[right, font=\scriptsize, xshift=2pt] {No} ++(0, -0.6) -- (confirm);
\draw[arrow, red!70!black] (confirm.south) ++(0,-0.05) -- (showup);
\draw[arrow, red!70!black] (showup.west) -- node[above, font=\scriptsize] {No} (exit2);
\draw[arrow, red!70!black] (showup.south) -- node[right, font=\scriptsize, xshift=2pt] {Yes} ++(0, -0.4) -| (-0.8, -6.65);

% Arrows - Type II (blue)
\draw[arrow, blue!70!black] (header) -- (2.8, 0.2) -- (typeII);
\draw[arrow, blue!70!black, dashed] (typeII.south) ++(0,-0.05) -- (walkin);
\draw[arrow, blue!70!black] (walkin.south) ++(0,-0.05) -- ++(0, -1.55) -| (0.8, -6.65);

\end{tikzpicture}
\caption{Customer journey flow. Type I customers (red) make advance reservations during Stage I and may cancel; those who confirm at $k{+}\nu$ proceed to check-in. Type II customers (blue) are not yet present during Stage I; they arrive directly as walk-ins during Stage II. Both types undergo capacity checks at check-in.}
\label{fig::customer_journey}
\end{figure}

\subsubsection{Confirmation Call}
At time $k+\nu$ with $\nu\in(-k_0,1]$, the DM makes confirmation calls to all reserved customers for Day $k$. For instance, $\nu = -0.5$ corresponds to a call at noon the day before service, while $\nu = 0.5$ corresponds to midday on the service day itself. When $\nu < 0$ ($\nu \geq 0$), calls occur in Stage I (Stage II). Customers who have not cancelled by $k+\nu$ are contacted; cancellations are finalized during the call, and remaining customers arrive with certainty. From time $k+\nu$ onward, the DM has complete information about Type I arrivals on Day $k$. We treat $\nu$ as a predetermined constant and perform sensitivity analysis on this parameter.

\subsubsection{Manager's Online Decision Making: Balancing Reservations and Walk-ins}
The DM makes immediate accept/reject decisions for each reservation or walk-in request. For Day $k$, this involves selecting which reservations from $[k - k_0, k]$ to accept in Stage I and which Type II customers from $[k, k + 1)$ to accept in Stage II. There are $\mathcal{C}$ units of capacity each day (e.g., $C = 100$ hotel rooms or clinic appointment slots). Once occupied, a unit cannot be reallocated on the same day.

In Stage I, the DM may accept reservations beyond available capacity. When a Type I customer arrives at time $u \in [k, k + 1)$ but no resources are available, the DM incurs an overbooking loss of $\loss^{(k)}$. Each occupied resource on Day $k$ generates revenue $r^{(k)}$; idle resources forgo this revenue. The DM may accept Type II customers even when overbooking risk is positive. Admission control across both stages balances overbooking risk against capacity utilization. Figure~\ref{fig::overbooking_tradeoff} contrasts the two extremes: a conservative policy leaves rooms idle, forgoing revenue $r^{(k)}$, while an aggressive policy risks overbooking penalties $\loss^{(k)}$.

\begin{figure}[ht]
\centering
\begin{tikzpicture}[x=0.45cm, y=0.45cm]

% Room size
\def\rs{0.7}

% Conservative scenario (left)
\begin{scope}[shift={(-8, 0)}]
    \node[font=\small\bfseries] at (2.5, 6.0) {Conservative Policy};
    % Building outline
    \draw[thick, gray] (-0.3, -0.3) rectangle (5.3, 4.3);
    % Room grid: 5 columns x 4 rows
    % Row 1 (bottom): all occupied
    \foreach \x in {0,1,2,3,4} {
        \fill[black!75] (\x+0.15, 0.15) rectangle (\x+\rs, \rs);
    }
    % Row 2: all occupied
    \foreach \x in {0,1,2,3,4} {
        \fill[black!75] (\x+0.15, 1.15) rectangle (\x+\rs, 1+\rs);
    }
    % Row 3: 3 occupied, 2 idle
    \foreach \x in {0,1,2} {
        \fill[black!75] (\x+0.15, 2.15) rectangle (\x+\rs, 2+\rs);
    }
    \foreach \x in {3,4} {
        \draw[gray, thick] (\x+0.15, 2.15) rectangle (\x+\rs, 2+\rs);
    }
    % Row 4 (top): all idle
    \foreach \x in {0,1,2,3,4} {
        \draw[gray, thick] (\x+0.15, 3.15) rectangle (\x+\rs, 3+\rs);
    }
    % Annotation
    \node[font=\scriptsize, align=center] at (2.5, -1.2) {Revenue loss: $r^{(k)} \times 7$};
\end{scope}

% Aggressive scenario (right)
\begin{scope}[shift={(3, 0)}]
    \node[font=\small\bfseries] at (2.5, 6.0) {Aggressive Policy};
    % Building outline
    \draw[thick, gray] (-0.3, -0.3) rectangle (5.3, 4.3);
    % Room grid: all occupied
    \foreach \y in {0,1,2,3} {
        \foreach \x in {0,1,2,3,4} {
            \fill[black!75] (\x+0.15, \y+0.15) rectangle (\x+\rs, \y+\rs);
        }
    }
    % Turned-away customers (outside, above building)
    \node[font=\normalsize, red!70!black] at (1.5, 5.0) {\texttimes};
    \node[font=\normalsize, red!70!black] at (2.5, 5.0) {\texttimes};
    \node[font=\normalsize, red!70!black] at (3.5, 5.0) {\texttimes};
    \node[font=\scriptsize, red!70!black, right] at (4.0, 5.0) {turned away};
    % Annotation
    \node[font=\scriptsize, align=center] at (2.5, -1.2) {Overbooking loss: $\loss^{(k)} \times 3$};
\end{scope}

% Legend (bottom center)
\node[font=\scriptsize] at (-4.0, -2.8) {$\blacksquare$ Occupied};
\node[font=\scriptsize, gray] at (0, -2.8) {$\square$ Idle};
\node[font=\scriptsize, red!70!black] at (4.0, -2.8) {\texttimes~Overbooked};

\end{tikzpicture}
\caption{The overbooking tradeoff. A conservative policy (left) accepts fewer reservations, leaving rooms idle and forgoing revenue $r^{(k)}$ per empty room. An aggressive policy (right) overbooks, filling all rooms but incurring penalty $\loss^{(k)}$ when confirmed arrivals exceed capacity.}
\label{fig::overbooking_tradeoff}
\end{figure}

\paragraph{Notation conventions.}
Superscripts indicate customer type and day: $(j,k)$ denotes Type $j$ ($j=1$ for reservations, $j=2$ for walk-ins) on Day $k$. Subscripts indicate time or customer index. For example, $\lambda^{(1,k)}(t)$ is the arrival rate of Type I customers at time $t$ for Day $k$, and $B_t^{(k)}$ is the number of accepted reservations at time $t$ for Day $k$. Key notation is summarized below; Table \ref{tab:notation_board} provides a complete reference.
$T$ denotes the horizon (days), $C$ is the capacity per day,
$\lambda^{(1,k)}(t)$ and $\lambda^{(2,k)}(u)$ are arrival rates for Type I (reservations) and Type II (walk-ins) on Day $k$,
$\nu$ is the confirmation timing offset,
$\delta$ is the expected per-day occupancy fraction (turnover rate),
$\loss^{(k)}$, $r^{(k)}$ are the overbooking loss and revenue on Day $k$,
and $\iota = \log(CT)$ is a confidence parameter used in probabilistic bounds.
Table \ref{tab:notation_board} provides a comprehensive summary.

\begin{table}[h!]
\centering
\caption{Notations Used in the Model}
\renewcommand{\arraystretch}{1.2} % Optional: Adjust row height for better spacing
\begin{tabular}{|>{\centering\arraybackslash}m{3.5cm}|p{12cm}|}
\hline
\textbf{\small Notation} & \textbf{Description} \\ \hline
\small $T$ & Total number of days in the time horizon. \\ \hline
\small $k (\in \mathbb{Z})$ & Discrete time variable to represent a specific Day $k$ in the time horizon. \\ \hline
\small $[k, k+1)$ & Time interval associated with the check-in control phase (Stage II) on Day $k$. \\ \hline
\small $k_0 (\in \mathbb{N})$ & Maximum number of days in advance that a reservation (by a Type I customer) can be made. \\ \hline
\small $C$ & Total available capacity. \\ \hline
\small $\mathcal{L}^{(k)}$ & Overbooking penalty incurred if a Type I customer arrives to check in without resources available. \\ \hline
\small $r^{(k)}$ & Revenue from unit resource occupancy on Day $k$. \\ \hline
\small $t (\le T+1)$ & Continuous time point used in Stage I. \\ \hline
\small $u (\in [0,1])$ & Continuous time point used in Stage II. \\ \hline
\small $\Lambda^{(1,k)}(t,k)$ & Total number of booking requests in time interval $(t,k)$. \\ \hline
\small $\lambda^{(1,k)}(t)$ & The arrival rate at time $t$ of Type I customers in Stage I for Day $k$. \\ \hline
\small $p^{(k)}(t)$ & The probability of cancellation of a reservation made at $t$ for Day $k$ in Stage I. \\ \hline
\small $(X_i^{(0,k)}, Y_i^{(0,k)}, D_i^{(0,k)})$ & The booking status of the $i$-th Type I customer for a Day $k$ check-in during Stage I, where $X_i^{(0,k)} \in [0,1]$ and $Y_i^{(0,k)} \in \{0,1\}$. \\ \hline
\small $B^{(k)}$ & The number of accepted reservations on Day $k$ not cancelled in Stage I. \\ \hline
\small $q^{(1,k)}$ & The probability of reservation cancellation for Day $k$ in Stage II. \\ \hline
\small $\gamma^{(k)}(u)$ & The distribution of Type I's check-in timing in Stage II. \\ \hline
\small $\Lambda^{(2,k)}[u_1,u_2]$ & Total number of Type II customers in time interval $[u_1,u_2]$ in Stage II. \\ \hline
\small $\lambda^{(2,k)}(u)$ & The arrival rate of Type II customers in Stage II on Day $k$. \\ \hline
\small $(X_i^{(1,k)}, Y_i^{(1,k)}, D_i^{(1,k)})$ & The check-in status of the $i$-th Type I customer arriving on Day $k$, where $X_i^{(1,k)} \in [0,1]$ and $Y_i^{(1,k)} \in \{0,1\}$. \\ \hline
\small $(X_i^{(2,k)}, D_i^{(2,k)})$ & The check-in status of the $i$-th Type II customer arriving on Day $k$, where $X_i^{(2,k)} \in [0,1]$. \\ \hline
\end{tabular}
\label{tab:notation_board}
\end{table}

\subsection{Customer Arrival Process}\label{sec:arrival}
\subsubsection{Type I Customers in Stage I}
Stage I receives bookings from Type I customers. For any $k \in [T]$, $\Lambda^{(1,k)}[t,k]$ denotes the total number of booking requests for Day $k$ in interval $[t,k]$. We assume $\Lambda^{(1,k)}$ follows a Poisson process with measurable time-varying rate $\lambda^{(1,k)}(t)$.
Customers make independent reservation and cancellation decisions. As time approaches Day $k$, cancellations become less likely. A Type I customer booking at time $t$ keeps the reservation during Stage I with probability $p^{(k)}(t)$, where $p^{(k)}:[k-k_0,k]\rightarrow [0,1]$ is nondecreasing with $p^{(k)}(k)=1$.

Common specifications for $p^{(k)}(t)$ include:
\begin{itemize}
    \item \textit{Linear}: $p^{(k)}(t) = (t - (k-k_0))/k_0$, where cancellation probability decreases linearly with lead time.
    \item \textit{Exponential}: $p^{(k)}(t) = 1 - \exp(-\beta(t - (k-k_0)))$ for some rate $\beta > 0$.
\end{itemize}
For example, with $k_0=7$ days and linear $p^{(k)}(t)$, a customer booking 5 days ahead ($t=k-5$) has $p^{(k)}(k-5) = 2/7 \approx 0.29$ probability of keeping the reservation through Stage~I, while one booking 0.5 days ahead has $p^{(k)}(k-0.5) = 6.5/7 \approx 0.93$. This captures the intuition that last-minute bookings rarely cancel. Figure~\ref{fig::cancellation_prob} plots retention probability $p^{(k)}(t)$ for both specifications.

\begin{figure}[ht]
\centering
\begin{tikzpicture}[x=1cm, y=1cm]

% Axes
\draw[ultra thick, -{Latex[length=2.5mm]}] (0,0) -- (9,0) node[anchor=north west] {Time};
\draw[ultra thick, -{Latex[length=2.5mm]}] (0,0) -- (0,4.8) node[anchor=south east] {$p^{(k)}(t)$};

% Y-axis labels
\node[left] at (0, 0) {\small 0};
\node[left] at (0, 4) {\small 1};
\draw[thick] (-0.1, 4) -- (0.1, 4);

% X-axis labels
\node[below] at (0.5, 0) {\small $k{-}k_0$};
\node[below] at (5.5, 0) {\small $k$};
\node[below] at (7.5, 0) {\small $k{+}\nu$};
\draw[thick] (0.5, -0.1) -- (0.5, 0.1);
\draw[thick] (5.5, -0.1) -- (5.5, 0.1);
\draw[thick] (7.5, -0.1) -- (7.5, 0.1);

% Stage boundary
\draw[dashed, gray] (5.5, 0) -- (5.5, 4.6);
\node[above, gray, font=\scriptsize] at (3, 4.5) {Stage I};
\node[above, gray, font=\scriptsize] at (6.5, 4.5) {Stage II};

% Linear curve (solid blue)
\draw[thick, blue] plot[smooth, tension=0.8] coordinates {(0.5, 0) (2, 1.2) (3.5, 2.4) (5, 3.6) (5.5, 4)};
% Flat at p=1 after day k
\draw[thick, blue] (5.5, 4) -- (8.5, 4);

% Exponential curve (dashed red for pattern distinction)
\draw[thick, red!70!black, densely dashed] plot[smooth, tension=0.7] coordinates {(0.5, 0) (1.5, 1.8) (2.5, 2.8) (3.5, 3.4) (4.5, 3.8) (5.5, 4)};
\draw[thick, red!70!black, densely dashed] (5.5, 4) -- (8.5, 4);

% Confirmation call marker
\filldraw[black] (7.5, 4) circle (2pt);
\draw[thick, dotted, gray] (7.5, 0) -- (7.5, 4);
\node[right, font=\small] at (7.6, 3.4) {$q^{(1,k)} = p^{(k)}(k{+}\nu)$};

% Legend box (outside plot area, bottom right)
\draw[thick, gray!50] (6.5, 0.4) rectangle (9.5, 1.6);
\draw[thick, blue] (6.7, 1.3) -- (7.3, 1.3);
\node[right, font=\scriptsize] at (7.4, 1.3) {Linear};
\draw[thick, red!70!black, densely dashed] (6.7, 0.7) -- (7.3, 0.7);
\node[right, font=\scriptsize] at (7.4, 0.7) {Exponential};

% Annotations for early vs late booking (with more spacing)
\draw [decorate, decoration={brace, mirror, amplitude=5pt}, thick] (0.5,-1.2) -- (2.5,-1.2)
    node [midway, below, yshift=-6pt, font=\scriptsize, align=center] {Early booking};
\draw [decorate, decoration={brace, mirror, amplitude=5pt}, thick] (4,-1.2) -- (5.5,-1.2)
    node [midway, below, yshift=-6pt, font=\scriptsize, align=center] {Late booking};

\end{tikzpicture}
\caption{Retention probability as a function of booking time. The probability $p^{(k)}(t)$ of keeping a reservation increases as booking time $t$ approaches Day $k$. Early bookings have low retention probability; late bookings rarely cancel. After Day $k$, $p^{(k)}(t) = 1$: at the confirmation call ($k{+}\nu$), remaining customers show up with certainty.}
\label{fig::cancellation_prob}
\end{figure}

Booking requests in Stage I are modeled as 3-dimensional vectors:
% \[
% (X^{(0,k)}_1, Y^{(0,k)}_1,W^{(0,k)}_1, D^{(0,k)}_1),~ \dots,~ (X^{(0,k)}_{\Lambda^{(1,k)}(k-k_0,k)}, Y^{(0,k)}_{\Lambda^{(1,k)}(k-k_0,k)},W^{(0,k)}_{\Lambda^{(1,k)}(k-k_0,k)},D^{(0,k)}_{\Lambda^{(1,k)}(k-k_0,k)}).
% \]
\[
(X^{(0,k)}_1, Y^{(0,k)}_1,D^{(0,k)}_1),~ \dots,~ (X^{(0,k)}_{\Lambda^{(1,k)}(k-k_0,k)}, Y^{(0,k)}_{\Lambda^{(1,k)}(k-k_0,k)},D^{(0,k)}_{\Lambda^{(1,k)}(k-k_0,k)}).
\]

Here, $X^{(0,k)}_i \in [k-k_0,k]$ is the arrival time of the $i$-th reservation request. The binary variable $Y^{(0,k)}_i$ equals $0$ for cancellation in Stage I and $1$ otherwise. $D_i^{(0,k)}$ denotes the occupancy duration for the $i$-th booking. Conditioned on $X^{(0,k)}_i$, we have $Y^{(0,k)}_i \sim \text{Bernoulli}(p^{(k)}(X^{(0,k)}_i))$. The integrated rate of non-cancelled Type I customers is
\begin{align*}
    \lambda^{(1,k)}=\int_{k-k_0}^{k}\lambda^{(1,k)}(t)(1-p^{(k)}(t)) dt.
\end{align*}

\subsubsection{Customers in Stage II}

Let $B^{(k)}$ denote the number of reservations for Day $k$ not cancelled during Stage I. Each Type I customer's check-in information is modeled as a 3-dimensional vector:
\[
(X^{(1,k)}_1, Y^{(1,k)}_1, D^{(1,k)}_1),~ (X^{(1,k)}_2, Y^{(1,k)}_2,D^{(1,k)}_2),~ \dots,~ (X^{(1,k)}_{B^{(k)}}, Y^{(1,k)}_{B^{(k)}},D^{(1,k)}_{B^{(k)}}).
\]
Here, $X^{(1,k)}_i \in [0,1]$ is the time when the $i$-th Type I customer confirms check-in or cancellation. The binary variable $Y^{(1,k)}_i$ equals 0 for cancellation and 1 for check-in. We assume $X_i^{(1,k)} \sim \gamma^{(k)}$ and $Y_i^{(1,k)} \sim \text{Bernoulli}(q^{(1,k)})$. $D_i^{(1,k)}$ denotes the occupancy duration.

The Stage II confirmation rate $q^{(1,k)}$ relates to $p^{(k)}(t)$ via $q^{(1,k)} = p^{(k)}(k+\nu)$---the survival probability evaluated at confirmation time $k+\nu$. When confirmation occurs after the service day begins ($\nu > 0$), $p^{(k)}$ is capped at 1, so $q^{(1,k)} = 1$: customers who survived Stage I almost surely show up. When confirmation occurs before the service day ($\nu < 0$), residual cancellation risk remains and $q^{(1,k)} < 1$.

For Type II customers on Day $k$, $\Lambda^{(2,k)}[u_1,u_2]$ denotes the number of arrivals in interval $(u_1, u_2)$. We assume $\Lambda^{(2,k)}$ follows a time-varying Poisson process with rate $\lambda^{(2,k)}(u)$ for $u \in [0,1]$. The integrated arrival rate is
\[
\lambda^{(2,k)} = \int_{0}^{1} \lambda^{(2,k)}(u) \, du.
\]
The total number of Type II customers on Day $k$ follows $\text{Poisson}(\lambda^{(2,k)})$. Each Type II customer's check-in information is modeled as a 2-dimensional vector (Type II customers do not cancel):
\[
(X^{(2,k)}_1,D^{(2,k)}_1),~ (X^{(2,k)}_2,D^{(2,k)}_2),~ \dots,~ (X^{(2,k)}_{\Lambda^{(2,k)}[0,1]},D^{(2,k)}_{\Lambda^{(2,k)}[0,1]}).
\]
Here, $X^{(2,k)}_i \in [0,1]$ is the arrival time of the $i$-th Type II customer. $D_i^{(j,k)}$ denotes the occupancy duration. For $j\in\{0,1,2\}$, we consider two occupancy duration distributions widely used in service systems (e.g., \citet{lei2020real}, \citet{dong2024value}): \begin{itemize}
    \item[1)] \textbf{Geometric}: $D_i^{(j,k)}\sim \text{Geometric}(q)$, where $q\in [0,1]$.
    \item[2)] \textbf{Constant}: $D_i^{(j,k)}\equiv d$, where $d\in \mathbb{N}$.
\end{itemize}

For analytical convenience, we define the parameter $\delta$ to capture the expected per-day resource occupancy. For geometric duration, $\delta = 1-q$ represents the probability that a customer departs each day; for constant duration $d$, we set $\delta = 1/d$. This parameter $\delta$ measures the rate at which resources turn over: higher $\delta$ indicates faster turnover, while lower $\delta$ indicates longer average occupancy. The parameter $\delta$ will be used throughout our analysis to characterize capacity constraints and arrival rate requirements. Figure~\ref{fig::capacity_evolution} shows how daily occupancy combines continuing stays with new check-ins, with approximately $\delta C$ rooms turning over each day.

\begin{figure}[ht]
\centering
\begin{tikzpicture}[x=1.6cm, y=0.5cm]

% Axes
\draw[ultra thick, -{Latex[length=2.5mm]}] (-0.3,0) -- (7.5,0) node[anchor=north west] {Days};
\draw[ultra thick, -{Latex[length=2.5mm]}] (0,0) -- (0,8.5) node[anchor=south east, font=\small] {Rooms};

% Capacity line
\draw[thick, dashed, gray] (-0.2, 7) -- (5.3, 7);
\node[left, font=\small, gray] at (-0.2, 7) {$C$};

% Day labels
\foreach \x/\label in {0.5/{$k{-}2$}, 1.5/{$k{-}1$}, 2.5/{$k$}, 3.5/{$k{+}1$}, 4.5/{$k{+}2$}} {
    \node[below, font=\small] at (\x, 0) {\label};
}

% Day k-2: continuing stays + new check-ins
\begin{scope}[shift={(0.5, 0)}]
    \fill[black!25] (-0.35, 0.1) rectangle (0.35, 2);
    \fill[black!60] (-0.35, 2) rectangle (0.35, 5);
    \draw[thick] (-0.35, 0.1) rectangle (0.35, 5);
\end{scope}

% Day k-1: continuing + new
\begin{scope}[shift={(1.5, 0)}]
    \fill[black!25] (-0.35, 0.1) rectangle (0.35, 3.5);
    \fill[black!60] (-0.35, 3.5) rectangle (0.35, 5.5);
    \draw[thick] (-0.35, 0.1) rectangle (0.35, 5.5);
\end{scope}

% Day k: continuing + new
\begin{scope}[shift={(2.5, 0)}]
    \fill[black!25] (-0.35, 0.1) rectangle (0.35, 4);
    \fill[black!60] (-0.35, 4) rectangle (0.35, 6);
    \draw[thick] (-0.35, 0.1) rectangle (0.35, 6);
\end{scope}

% Day k+1: continuing + new
\begin{scope}[shift={(3.5, 0)}]
    \fill[black!25] (-0.35, 0.1) rectangle (0.35, 4.3);
    \fill[black!60] (-0.35, 4.3) rectangle (0.35, 6.2);
    \draw[thick] (-0.35, 0.1) rectangle (0.35, 6.2);
\end{scope}

% Day k+2: continuing + new
\begin{scope}[shift={(4.5, 0)}]
    \fill[black!25] (-0.35, 0.1) rectangle (0.35, 3.8);
    \fill[black!60] (-0.35, 3.8) rectangle (0.35, 5.8);
    \draw[thick] (-0.35, 0.1) rectangle (0.35, 5.8);
\end{scope}

% Legend (positioned clearly outside bars)
\node[font=\scriptsize, right] at (5.6, 6.5) {New check-ins};
\fill[black!60] (5.35, 6.3) rectangle (5.55, 6.7);

\node[font=\scriptsize, right] at (5.6, 5.5) {Continuing stays};
\fill[black!25] (5.35, 5.3) rectangle (5.55, 5.7);

% Delta C annotation (turnover)
\draw[thick, <->, gray] (1.0, 5) -- (1.0, 7);
\node[left, font=\scriptsize, gray] at (0.95, 6) {$\approx \delta C$};

\end{tikzpicture}
\caption{Capacity evolution with multi-day stays. Each day's occupancy combines continuing stays (light gray) and new check-ins (dark gray). Approximately $\delta C$ rooms turn over each day---this turnover rate determines both available capacity for new arrivals and the scale of walk-in demand needed for the busy season assumption.}
\label{fig::capacity_evolution}
\end{figure}

\begin{example}[Notation in context]
To anchor the notation, consider a hotel with $C=500$ rooms operating over $T=365$ days, with a booking window of $k_0=30$ days. Guests stay for a geometrically distributed number of nights with $q=0.8$, so $\delta = 1-q = 0.2$ (20\% of guests depart each day, corresponding to an average stay of $1/\delta = 5$ nights). The confidence parameter is $\iota = \log(CT) = \log(182500) \approx 12$.

To illustrate a specific customer's journey, consider Customer $i$ who makes a reservation at time $t=8.5$ for Day $k=10$ (i.e., 1.5 days in advance). Her booking is represented by the tuple $(X_i^{(0,10)}, Y_i^{(0,10)}, D_i^{(0,10)}) = (8.5, 1, 3)$:
\begin{itemize}
    \item $X_i^{(0,10)} = 8.5$: booking time (Day 8, afternoon).
    \item $Y_i^{(0,10)} = 1$: she keeps the reservation through Stage I (does not cancel before Day 10).
    \item $D_i^{(0,10)} = 3$: she stays for 3 nights (Days 10, 11, 12).
\end{itemize}
Since she booked 1.5 days ahead, her Stage I survival probability is $p^{(10)}(8.5) \approx 0.85$---relatively high because late bookings rarely cancel. If she does not cancel by the confirmation call at time $10+\nu$, she becomes a confirmed arrival.

This example also illustrates the capacity dynamics. With $\delta = 0.2$ and $C = 500$, the steady-state turnover is approximately $\delta C = 100$ departures per day. The busy season assumption (Assumption~\ref{assump:busy_season}) requires walk-in demand $\lambda^{(2,k)} \gtrsim \iota + \sqrt{C\delta\iota} \approx 12 + \sqrt{500 \times 0.2 \times 12} \approx 47$ customers per day---less than half the turnover rate. The threshold scales as $O(\sqrt{C\delta\log T})$, growing much slower than capacity.
\end{example}

% \subsubsection{Role of Confirmation Call} 

\subsection{Analysis Framework and Offline Benchmark}\label{sec:benchmark}
Let $\pi_1$ and $\pi_2$ denote admissible policies for Stage I and Stage II. $\pi_1$ governs acceptance of booking requests in Stage I; $\pi_2$ governs acceptance of check-in requests in Stage II. Denote $z_i^{(0,k)}$ as the acceptance decision for the $i$-th booking request for Day $k$ in Stage I ($z_i^{(0,k)} = 1$ for accept, $0$ for reject). Similarly, $z_i^{(1,k)}$ and $z_i^{(2,k)}$ denote acceptance decisions for the $i$-th Type I and Type II arrivals in Stage II. We write $\pi_1=\{z_i^{(0,k)}\}_{i,k}$ and $\pi_2=\{z_i^{(1,k)},z_i^{(2,k)}\}_{i,k}$. Define $\Pi=\Pi_1\otimes\Pi_2$ as the set of admissible policies satisfying capacity constraints.

The decision maker faces a sequential optimization problem: minimize total expected loss over $T$ days by balancing two competing costs. The first cost arises from overbooking—when a reservation holder arrives but no room is available, incurring penalty $\loss^{(k)}$ per rejected customer. The second cost arises from idle capacity—unused resources forfeit potential revenue $r^{(k)}$ per unit. The challenge lies in making accept/reject decisions without advance knowledge of arrivals, while multi-day occupancy durations create complex inter-day dependencies in both capacity constraints and loss functions.

The two-stage online allocation problem is
\begin{equation}
    \label{eq:prob_stage_I}
    \begin{aligned}
        \min_{(\pi_1,\pi_2)\in\Pi} \quad & \mathcal{L}_1(\pi_1,\pi_2) := \mathbb{E} \left[ \sum_{k=1}^T \mathcal{L}_1^{(k)}(\pi_2, \{B^{(k')}(\pi_1)\}_{k'=1}^{k}) \right] \\
        \text{s.t.} \quad & B^{(k)}(\pi_1) = \sum_{i=1}^{\Lambda^{(1,k)}[k-k_0,k]} Y_i^{(0,k)} z_i^{(0,k)}, \quad \forall k \in [T], \\
        &\sum_{k'=1}^k \left( \sum_{i=1}^{B^{(k')}(\pi_1)} Y_i^{(1,k')} z_i^{(1,k')} \mathbbm{1}\{D_i^{(1,k')} \geq k-k'\} \right. \\
        &\qquad\qquad \left. + \sum_{i=1}^{\Lambda^{(2,k')}[0,1]} z_i^{(2,k')} \mathbbm{1}\{D_i^{(2,k')} \geq k-k'\} \right) \leq C, \quad \forall k \in [T], \\
        & z_{i_1}^{(0,k)}, z_{i_2}^{(1,k)}, z_{i_3}^{(2,k)} \in \{0,1\}, \quad \forall k \in [T], i_1 \in [\Lambda^{(1,k)}[k-k_0,k]],\\
        &\qquad\qquad  \, i_2 \in [B^{(k)}(\pi_1)], \, i_3 \in [\Lambda^{(2,k)}[k-k_0,k]].
    \end{aligned}
\end{equation}

% \begin{equation}
%     \label{eq:prob_stage_I}
%     \begin{aligned}
%         \min_{(\pi_1,\pi_2)\in\Pi} \quad & \Lossi(\pi_1,\pi_2):=\ex{}{\sum_{k=1}^T\Lossii^{(k)}(\pi_2,\{B^{(k')}(\pi_1))\}_{k'=1}^{k})}\\ 
%         \st \qquad& B^{(k)}(\pi_1) = \sum_{i=1}^{\Lambda^{(1,k)}(k-k_0,k)}Y_i^{(0,k)}z_i^{(0,k)},\quad\forall k\in[T],\\ 
% &\sum_{k'=1}^k\prn{\sum_{i=1}^{B^{(k')}(\pi_1)}Y_i^{(1,k')}z_i^{(1,k')}\mathbbm 1\{D_i^{(1,k')}\ge k-k'\}+\sum_{i=1}^{\Lambda^{(2,k')}[0,1]}z_i^{(2,k')}\mathbbm 1\{D_i^{(2,k')}\ge k-k'\} } \le C,\quad\forall k\in[T],\\
%         & z_{i_1}^{(0,k)},z_{i_2}^{(1,k)},z_{i_3}^{(2,k)} \in \{0,1\},\quad\forall k\in[T],i_1\in[\Lambda^{(1,k)}[k-k_0,k]],i_2\in [B^{(k)}(\pi_1)], i_3\in [\Lambda^{(2,k)}[k-k_0,k]],
%         % \ex{}{\sum_{k=1}^T\loss^{(k)}\max\{0,\sum_{i=1}^{B^{(k)}}z_i^{(1,k)}Y_i^{(1,k)}+\sum_{i=1}^{\lambda^{(2,k)}}z_i^{(2,k)}z_i^{(2,k)}-C\}  }
%     \end{aligned}
% \end{equation}
where the loss function $\Lossii^{(k)}(\pi_2,\{B^{(k')}(\pi_1))\}_{k'=1}^{k})$ for Stage II is given by
\begin{equation}
    \label{eq:loss_stage_II}
    \begin{aligned}
        &\Lossii^{(k)}(\pi_2,\{B^{(k')}(\pi_1))\}_{k'=1}^{k})
        = \loss^{(k)}\sum_{i=1}^{B^{(k)}}Y_i^{(1,k)}(1-z_i^{(1,k)})\\ 
        &\quad+r^{(k)}\prn{C-\sum_{k'=1}^k\prn{\sum_{i=1}^{B^{(k')}}Y_i^{(1,k')}z_i^{(1,k')}\mathbbm 1\{D_i^{(1,k')}\ge k-k'\}+\sum_{i=1}^{\Lambda^{(2,k')}[0,1]}z_i^{(2,k')}\mathbbm 1\{D_i^{(2,k')}\ge k-k'\} } }.
    \end{aligned}
\end{equation}
The first term $\loss^{(k)}\sum_{i=1}^{B^{(k)}}Y_i^{(1,k)}(1-z_i^{(1,k)})$ captures overbooking loss—customers turned away despite holding confirmed reservations. The second term $r^{(k)}(C - \text{occupied})$ captures idle capacity loss—revenue foregone from unoccupied rooms. Varying occupancy durations create correlations across days in both capacity constraints and loss functions, yielding a high-dimensional state space. Dynamic programming becomes computationally infeasible and offers limited insight into effective policy design. We instead propose intuitive online policies that provide structural insights while achieving near-optimal performance relative to the offline benchmark (which assumes complete advance knowledge of arrivals).

% The first term represents the overbooking loss on Day $k$, while the second term represents the idle resource loss. Due to the occupancy durations, there is a correlation between the capacity constraints for each day as well as the loss functions. The state space then has very high dimension, making dynamic programming (\textbf{DP}) computationally challenging. Moreover, the \textbf{DP} approach does not provide sufficient intuition or insight into the properties or structures that an effective policy should possess. In the remainder of this paper, we propose intuitive online policies that offer insight into the classic online reusable resource allocation problem with overbooking while achieving near-optimal performance compared to the offline counterpart, where full knowledge of all arrivals is available in advance.

We benchmark our policies against the Hindsight Optimal (\HO), which has full advance knowledge of all bookings, check-ins, and cancellations. The \HO~ benchmark optimally controls both stages subject to capacity constraints. Let filtrations $\{\Fi^{(k)},\Fii^{(k)}\}$ contain all realized information from Stage I and Stage II up to (and including) Day $k$. Decompose the hindsight optimal policy as $\pi^\star=(\pi_1^{\star},\pi_2^{\star})$, where $\pi_1^{\star}$ and $\pi_2^{\star}$ access filtrations $\{\Fi^{(k)}\}_{k=1}^{T}$ and $\{\Fii^{(k)}\}_{k=1}^{T}$. The regret is
% and sample path $\Fi^{(T)},\Fii^{(T)}$, we can define the regret for any policy $(\pi_1,\pi_2)\in\Pi$ as 
\begin{align}
    \label{equ::expected regret}\regret^T(\pi_1,\pi_2,\Fi^{(T)},\Fii^{(T)})={\Lossi(\pi_1,\pi_2)-\Lossi(\pi^{\star}_1,\pi^{\star}_2)}.
\end{align}
Decompose expected regret into contributions from Stage I and Stage II decisions:
\begin{align}
\regret^T(\pi_1,\pi_2)&=\Lossi(\pi_1,\pi^{\star}_2)-\Lossi(\pi^{\star}_1,\pi^{\star}_2)+\Lossi(\pi_1,\pi_2)-\Lossi(\pi_1,\pi^{\star}_2) \label{equ::regret decomposition}\\
&=\sum_{k=1}^T\E_{\Fi^{(T)},\Fii^{(T)}}\brk{\underbrace{\Lossii^{(k)}(\pi^{\star}_2,\{B^{(k')}(\pi_1))\}_{k'=1}^{k})-\Lossii^{(k)}(\pi^{\star}_2,\{B^{(k')}(\pi^{\star}_1))\}_{k'=1}^{k})}_{\mathcal{R}_1^{(k)}:~\text{Regret from Stage I on Day $k$}}}. \label{equ::Stage I regret}\\
&\quad+ \sum_{k=1}^T\E_{\Fi^{(T)},\Fii^{(T)}}\brk{\underbrace{\Lossii^{(k)}(\pi_2,\{B^{(k')}(\pi_1))\}_{k'=1}^{k})-\Lossii^{(k)}(\pi^{\star}_2,\{B^{(k')}(\pi_1))\}_{k'=1}^{k})}_{\mathcal{R}_2^{(k)}:~\text{Regret from Stage II on Day $k$}}}\label{equ::Stage II regret}
\end{align}
The following sections provide regret guarantees and sensitivity analysis on interactions between model elements.

We analyze the two stages separately. Section~\ref{sec:Stage I} addresses Stage~I: we establish a lower bound showing $\Omega(T)$ regret without sufficient walk-ins, then present our DASS-I policy achieving $O(1)$ regret under the \emph{busy season} condition. Section~\ref{sec:Stage II} addresses Stage~II: we formalize the decoupling property of DASS-I and present DASS-II with sensitivity analysis on confirmation timing. Section~\ref{sec:nested} extends to heterogeneous customers with nested capacity protection.

% \begin{equation}
%     \label{eq:prob_stage_II}
%     \begin{aligned}
%         \min_{(\pi_1,\pi_2)\in\Pi} \quad & \Lossi(\pi_1,\pi_2):=\ex{}{\sum_{k=1}^T\Lossii^{(k)}(\pi_2,B^{(k)})}\\ 
%         \st \quad& B^{(k)} = \sum_{i=1}^{\Lambda^{(1,k)}(k-k_0,k)}Y_i^{(0,k)}z_i^{(0,k)},\quad\forall k\in[T],\\ 
%         & z_i^{(0,k)} \in \{0,1\},
%         % \ex{}{\sum_{k=1}^T\loss^{(k)}\max\{0,\sum_{i=1}^{B^{(k)}}z_i^{(1,k)}Y_i^{(1,k)}+\sum_{i=1}^{\lambda^{(2,k)}}z_i^{(2,k)}z_i^{(2,k)}-C\}  }
%     \end{aligned}
% \end{equation}

\section{Regret Analysis for Stage I}\label{sec:Stage I}

We now analyze Stage I—the reservation acceptance phase. Our analysis reveals a fundamental dichotomy: constant regret is achievable if and only if walk-in demand is sufficiently high, a condition we term the \emph{busy season}. We first establish this necessity via a lower bound (Section \ref{sec:lower_bound}), then present our DASS-I policy achieving $O(1)$ regret under the busy season condition (Section \ref{sec:algorithm_stage_i}).

We present our Stage I policy—the booking control process. Section \ref{sec:lower_bound} introduces the busy season assumption and demonstrates that any online policy incurs $\Omega(T)$ regret when the busy season condition fails. Section \ref{sec:algorithm_stage_i} details our Decoupled Adaptive Safety Stock-I (\textbf{DASS-I}) algorithm and shows that it achieves constant regret for Stage I decisions as specified in \eqref{equ::Stage I regret}.

\subsection{Busy Season and Lower Bound}\label{sec:lower_bound}
This section provides intuition on the role of walk-in customers in the reusable resource allocation problem.

\textbf{Why do we need walk-ins?} Without walk-in customers, the manager faces a dilemma: accept too few reservations and risk idle rooms; accept too many and risk overbooking. The fundamental difficulty is that cancellations are random—even with perfect knowledge of arrival rates, the manager cannot predict exactly how many reservations will materialize. Walk-ins provide a ``safety valve'': if reservations fall short of expectations, walk-ins can fill the gap; if reservations exceed expectations, the manager can simply reject walk-ins. The following assumption quantifies the minimum walk-in rate needed for this safety valve to work effectively.

\begin{assumption}[Busy seasons]
\label{assump:busy_season}
    For any $k\in[T]$, we assume that:
    \begin{itemize}
        \item[(i)] The reservation arrival rate $\lambda^{(1,k)}$ slightly exceeds expected departure rates.
        \item[(ii)] The walk-in arrival rate $\lambda^{(2,k)}$ is at least $\Omega(\iota+\sqrt{C\delta\iota})$, where $\delta$ measures expected occupancy duration and $\iota=O(\log(CT))$ is a confidence parameter.
    \end{itemize}
   Here $\delta:=$ (a) $1 - q$ when occupancy duration follows a geometric distribution with parameter $q$, or (b) $1/d$ when occupancy duration is constant $d$. See Appendix \ref{appendix::busy_season} for precise conditions.
\end{assumption}

\begin{example}[Busy season threshold calculation]
Consider $C=500$ rooms, $T=365$ days, and $\delta = 0.2$ (geometric with $q=0.8$, average stay of 5 nights). The confidence parameter is $\iota = \log(CT) = \log(182500) \approx 12.1$, corresponding to a failure probability of $\exp(-\iota) \approx 5.5 \times 10^{-6}$ per day---negligible over the planning horizon. The busy season threshold becomes:
\[
\lambda^{(2,k)} \gtrsim \iota + \sqrt{C\delta\iota} \approx 12.1 + \sqrt{500 \times 0.2 \times 12.1} \approx 12.1 + 34.8 = 47 \text{ walk-ins/day}.
\]
With turnover rate $\delta C = 100$ departures per day, this threshold is less than half the daily capacity turnover---easily met by most urban hotels. The threshold scales as $O(\sqrt{C\delta\log T})$, so larger capacity or longer horizons require proportionally more walk-ins, but only at a square-root rate.
\end{example}

Condition (i) is common in service systems (e.g., \citet{chen2017revenue}, \citet{dong2024value}). Condition (ii) requires sufficient walk-in demand to offset uncertainties from cancellation and no-show, with the confidence bound of level $\exp(-\iota)$ over horizon $T$ (see \citet{lattimore2020bandit}). To underscore the tightness of Assumption \ref{assump:busy_season} up to logarithmic factors, 
% \begin{assumption}[Busy seasons]
%     For any $k\in[T]$, we assume that the Type I customer rate $\lambda^{(1,k)}$ satisfies
%     \begin{align}
%          \lambda^{(1,k)}> \hat{C}^{(k)}+\frac{4\iota}{3} +\sqrt{\frac{8\iota^2}{3}+2\hat{C}^{(k)}\cdot\iota} \label{equ::require lambda 1}
%     \end{align}
%     and the Type II customer rate $\lambda^{(2,k)}$ satisfies
%     \begin{align}
%     \lambda^{(2,k)}\ge 5\iota+1+4\sqrt{\iota \Tilde{C}}+\sqrt{10\iota^2 +2\iota+8\iota\sqrt{\iota\Tilde{C}}}  \label{equ::require lambda}
%     \end{align}
%     Here $\Tilde{C}=3C(1-q)$ when the duration  $\sim\text{geometric}(q)$ and  $\tilde{C}=C/d$ when the duration equals constant $d$.
% \end{assumption}
we demonstrate that the performance gap between any online policy and the hindsight optimal policy can grow linearly in $T$ when the uncertainty of either occupancy duration or cancellation cannot be offset by simultaneously exploiting reservations and walk-ins. In contrast to the $O(1)$ regret achieved in revenue management with non-reusable resources (e.g., \citet{jasin2012re, bumpensanti2020re, vera2021bayesian}), our reusable resource allocation model introduces two additional sources of uncertainty: cancellation and occupancy duration. Since Type I reservations can be cancelled without penalty before confirmation, the decision maker must overbook to offset potential idle resource loss. However, to avoid overbooking loss, the total number of reservations cannot exceed idle capacities by too much. With overlapped occupancy durations across days, numerous reservations for upcoming days make the decision maker more conservative with current-day reservations. This tradeoff results in a positive probability of daily loss, accumulating to $\Omega(T)$ total regret—explaining the $\Omega(T/C)$ regret observed in existing online reusable resource allocation problems (e.g., \citet{feng2021online, gong2022online, zhang2022online}). Typically, these models require capacity to scale linearly with $T$ to mitigate such uncertainties, whereas service systems with reusable resources repeatedly allocate limited resources to possibly overcrowded arrivals.

\begin{example}[Lower bound illustration]
Consider $C=500$ rooms, geometric duration with $\delta = 0.2$, and suppose reservation demand is $\lambda^{(1,k)} = 120$ (slightly above turnover rate $\delta C = 100$). If walk-in demand is only $\lambda^{(2,k)} = 20$ customers per day (violating the busy season threshold of approximately 47):
\begin{itemize}
    \item \textit{Conservative strategy}: Accept 100 reservations to avoid overbooking. Expected show-ups $\approx 100 \times 0.9 = 90$, leaving 10 rooms idle. With only 20 walk-ins expected, the gap can often be filled, but variance causes positive probability of loss each day.
    \item \textit{Aggressive strategy}: Accept 120 reservations. Expected show-ups $\approx 108$, risking overbooking when cancellations are fewer than expected.
\end{itemize}
In either case, the manager faces constant probability of loss on each day---from overbooking when aggressive, from idling when conservative---accumulating to $\Omega(T)$ total regret. Only with sufficient walk-ins ($\lambda^{(2,k)} \gtrsim 47$) can the manager use a conservative strategy while relying on walk-ins to fill capacity gaps.
\end{example}

We now formally state the lower bound in Proposition \ref{prop:lower_bound}.
% \begin{proposition}
%     \label{prop:lower_bound}
%     Assume that
%         there exists a constant \(\kappa>0\) such that the walk-in rates \(\lambda_1^{(2,k)} = \int_0^1 \lambda^{(2,k)}(u) du\) in Stage II satisfy \(\lambda_1^{(2,k)} \leq \kappa \sqrt{C\delta},\)
%      Then there exists an instance such that for any online policy \((\pi_1, \pi_2)\), we have:
%     \begin{align*}
%        \regret^T(\pi_1, \pi_2) = \Omega(T).
%     \end{align*}
% \end{proposition}
\begin{proposition}[Loss lower bound]\label{prop:lower_bound}
    Assume $\lambda^{(2,k)}\le \sqrt{\delta C\cdot \iota }$ for any $k \in [T]$. Then there exist instances and constant $c>0$ such that for any online policy $({\pi}_1,{\pi}_2)$, the regret grows linearly in $T\exp(-c\iota)$:
    \begin{align*}
        \mathrm{Regret}^T(\pi_1,\pi_2)\ge \sum_{k=1}^{T}\min(\ell^{(k)}_{\text{over}},r^{(k)})\ge \min \{\min(\ell^{(k)}_{\text{over}},r^{(k)})\}_{k=1}^{T}\exp(-c\iota)\cdot T .
    \end{align*}
    Hence $\mathrm{Regret}^T(\pi_1,\pi_2) = \Omega(T)$ when $\iota = O(1).$
\end{proposition}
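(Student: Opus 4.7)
The plan is to exhibit a single adversarial i.i.d.\ family of instances on which every online policy loses a per-day expected increment of order $\min(\ell^{(k)}_{\text{over}},r^{(k)})\exp(-c\iota)$, then sum over days. I would fix a constant capacity $C$, take constant occupancy duration $d=1$ so that days do not share resources (hence $\delta=1$ and the dynamics decouple across days), set $p^{(k)}(t)\equiv 1$ so Stage I never loses a booking request and $B^{(k)}$ equals the number of reservations accepted, set $q^{(1,k)}=1/2$ so every reserved customer flips a fair coin in Stage II, saturate the hypothesis at $\lambda^{(2,k)}=\sqrt{\delta C\iota}$, and pick $\lambda^{(1,k)}$ large enough both to meet the busy-season inequality \eqref{eq:require_booking} and to allow the policy to choose any integer $N$ as the number of accepted reservations. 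With $d=1$ and i.i.d.\ days, both $\Lossi(\pi_1,\pi_2)$ and $\Lossi(\pi^\star_1,\pi^\star_2)$ decompose into independent single-day sums, and the hindsight optimum on one day incurs essentially zero expected loss because, given all $Y^{(1,k)}_i$ and walk-in arrivals in advance, it can accept exactly the right number of reservations and walk-ins.

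The core argument is then a case analysis on the (possibly randomized) count $N$ that the online policy commits to by the end of Stage I, conditional on which the Type I check-ins are $B\sim\mathrm{Binomial}(N,1/2)$ and the walk-ins form an independent Poisson process of total rate at most $\sqrt{C\iota}$, whose upper tail beyond $2\sqrt{C\iota}$ is already $\exp(-c\iota)$. If $N\le 2C-\Theta(\sqrt{C\iota})$, the Chernoff lower tail of $B$ yields $\Pr[B+\text{walk-ins}<C]\ge\exp(-c\iota)$, forcing an idle-capacity loss of at least $r^{(k)}$. If $N\ge 2C+\Theta(\sqrt{C\iota})$, the Chernoff upper tail gives $\Pr[B>C]\ge\exp(-c\iota)$, forcing an overbooking loss of at least $\ell^{(k)}_{\text{over}}$ that no Stage II decision can avoid because $B$ is determined by Stage I reservations alone. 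In the $O(\sqrt{C\iota})$-wide transition band around $N=2C$, a local CLT or Berry--Esseen anti-concentration bound simultaneously lower bounds $\Pr[B>C]$ and $\Pr[B+\text{walk-ins}<C]$ by $\exp(-c\iota)$. In all three cases, the single-day expected regret is at least $\min(\ell^{(k)}_{\text{over}},r^{(k)})\exp(-c\iota)$, and summing $k=1,\dots,T$ together with the trivial bound $\sum_k a_k\ge T\min_k a_k$ delivers both displayed inequalities.

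The principal obstacle is ruling out, in the middle case, that adaptive Stage II admission of walk-ins can absorb the Binomial variance of $B$. The quantitative hypothesis $\lambda^{(2,k)}\le\sqrt{\delta C\iota}$ is exactly what makes this impossible: the total Poisson mass of walk-ins is of the same order as the one-sided confidence radius but is strictly dwarfed, in a tail sense, by the two-sided anti-concentration deficit of $B$, so even a prescient Stage II policy can only shift the loss and not eliminate it. Because a naive Chernoff bound does not give two-sided lower tails, this step genuinely requires a local CLT / Stone-type estimate; a secondary subtlety is that the policy's choice of $N$ may itself depend on the Stage I Poisson arrivals, which I would handle by conditioning on $N$ throughout and using that the subsequent Stage II randomness is independent of that conditioning.
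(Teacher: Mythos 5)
Your proposal is correct in substance and rests on the same core dichotomy as the paper's proof: condition on the number $N$ of reservations the policy has committed to by the end of Stage I, and show that every choice of $N$ leaves either an overbooking event or an idling event with probability at least $\exp(-c\iota)$, while the hindsight optimum (which sees the cancellation coins and walk-ins in advance) is essentially lossless; summing over the $T$ decoupled days (duration $d=1$) gives the bound. The difference is in the choice of instance and the machinery it forces. The paper takes the degenerate instance $C=1$, $\lambda^{(1,k)}=1$, $\lambda^{(2,k)}=\sqrt{\iota}$, so the case analysis collapses to two trivial branches: $B^{(k)}\ge 2$ gives overbooking with constant probability, and $B^{(k)}\le 1$ gives idling whenever the walk-in Poisson is zero and the lone booking cancels, an event of probability at least $e^{-\sqrt{\iota}}\cdot\Theta(1)\ge e^{-c\iota}$ — all probabilities are computed directly, with no anti-concentration needed. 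Your general-$C$ instance with $q^{(1,k)}=1/2$ and $\lambda^{(2,k)}=\sqrt{C\iota}$ is strictly more informative (it exhibits the $\sqrt{\delta C\iota}$ threshold as the genuinely binding scale, rather than hiding it in a capacity-one example), but it requires the transition-band argument around $N=2C$, where you correctly identify that a reverse-Chernoff / local-CLT lower bound on both binomial tails is needed and that the hypothesis $\lambda^{(2,k)}\le\sqrt{\delta C\iota}$ is exactly what prevents Stage II walk-in admission from absorbing the lower tail of $B$. Your conditioning argument for handling the adaptivity of $N$ is sound, since the Stage II check-in coins and the walk-in process are independent of the Stage I filtration. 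The only items to fill in are the standard binomial tail lower bound and a quantitative check that the hindsight benchmark's expected per-day loss (of order $e^{-\Omega(\lambda^{(1,k)})}$ once $\lambda^{(1,k)}$ is taken large relative to $C$ and $\iota$) is dominated by $\exp(-c\iota)\min(\ell^{(k)}_{\rm over},r^{(k)})$; both are routine.
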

The proof decomposes regret into overbooking and idling losses. Under the assumption in Proposition \ref{prop:lower_bound}, when the DM overbooks in Stage I, there is a constant probability of incurring an overbooking loss. Conversely, if no reservations are overbooked, there remains probability of order $\exp(-c\iota)$ of an idling loss due to insufficient walk-ins. Hence, regardless of the policy choice, a positive probability of loss occurs each day, accumulating to $\Omega(T)$ total regret over the horizon. Detailed proofs are in Appendix \ref{appendix:proof}. Our result demonstrates the important role of walk-ins in offsetting uncertainties from cancellations and occupancy durations, underscoring the necessity of adaptive algorithms that exploit both customer types while maintaining safety guards.

\subsection{Algorithm Framework for Stage I}\label{sec:algorithm_stage_i}
We design DASS-I to decouple occupancy correlations across days through adaptive safety stocks. The policy maintains dynamic thresholds that balance acceptance against available resources while accounting for cancellation uncertainty and occupancy duration.

\begin{figure}[ht]
\centering
\begin{tikzpicture}[x=1cm, y=0.8cm]

% Axes
\draw[ultra thick, -{Latex[length=2.5mm]}] (0,0) -- (9.5,0) node[anchor=north west] {Time $t$};
\draw[ultra thick, -{Latex[length=2.5mm]}] (0,0) -- (0,6) node[anchor=south east] {Bookings};

% X-axis labels
\node[below] at (0, 0) {$k-k_0$};
\node[below] at (8, 0) {$k$};
\draw[thick] (8, 0.1) -- (8, -0.1);

% Rejection Region
\fill[gray!20, opacity=0.5] plot[smooth, tension=0.6] coordinates {(0, 2) (2, 2.5) (4, 3.5) (6, 4.5) (8, 5)} -- (8, 6) -- (0, 6) -- cycle;
\node[font=\bfseries, gray!60!black] at (1.5, 5.5) {Reject};
\node[font=\bfseries, gray!60!black] at (7.5, 1.5) {Accept};

% Capacity lines
\draw[thick, dashed, red!70!black] (0, 5) -- (9, 5) node[midway, above, font=\small] {$\hat{C}^{(k)}$ (Conservative Capacity)};

% Dynamic Threshold Curve (hat{B}_t) - curved upwards
\draw[ultra thick, blue] plot[smooth, tension=0.6] coordinates {(0, 2) (2, 2.5) (4, 3.5) (6, 4.5) (8, 5)};
\node[above left, blue, font=\small, rotate=10] at (6.2, 4.6) {Acceptance Threshold $\hat{B}_t^{(k)}$};

% Expected confirmed bookings (dashed blue below threshold)
\draw[thick, blue, dashed] plot[smooth, tension=0.6] coordinates {(0, 1.2) (2, 1.8) (4, 2.8) (6, 3.9) (8, 4.8)} node[below right, font=\scriptsize] {$\mathbb{E}[\text{Show-ups}]$};

% Safety Stock Annotation
\draw[<->, thick, gray] (4.5, 3.08) -- (4.5, 3.75);
\node[right, gray, font=\scriptsize, align=left] at (4.6, 3.4) {Safety\\Stock};

% Actual Bookings (Stepped line)
\draw[thick, black] (0, 0) -- (1, 0) -- (1, 1) -- (2.5, 1) -- (2.5, 2) -- (4, 2) -- (4, 3) -- (5.5, 3) -- (5.5, 4) -- (6.5, 4) -- (6.5, 5);
\node[font=\small] at (4.75, 2.5) {Accepted $B_t^{(k)}$};

\end{tikzpicture}
\caption{Schematic of DASS-I Booking Control. The policy maintains a dynamic acceptance threshold $\hat{B}_t^{(k)}$ (solid blue line) that lies above the expected show-ups (dashed blue line) by a safety margin. Reservations (black stepped line) are accepted as long as they stay below this threshold and the conservative capacity $\hat{C}^{(k)}$. As time progresses and cancellation risk decreases ($p^{(k)}(t) \to 1$), the safety stock shrinks, allowing the threshold to converge towards capacity.}
\label{fig::dass_I_schematic}
\end{figure}

\textbf{Intuition: safety stocks for overbooking control.} The key question for Stage I is: ``How many reservations can we safely accept for Day $k$?'' The answer depends on how many reservations will actually show up, which is uncertain due to cancellations. The threshold $\hat{B}_t^{(k)}$ answers a related question: ``How many confirmed guests can we \emph{expect} from current reservations?'' Since cancellations are random, we cannot use the expected value alone—we must add a safety margin to avoid overbooking with high probability. This margin shrinks as $p^{(k)}(t) \to 1$ (closer to service time, fewer cancellations possible), allowing more reservations to be accepted as uncertainty resolves. Figure \ref{fig::dass_I_schematic} illustrates this mechanism, showing how the acceptance threshold stays above the expected show-up curve by a safety margin that tightens over time.

For service starting on Day $k$, we maintain a dynamic threshold $\hat{B}^{(k)}_{t}$ that upper-bounds the expected number of non-cancelled Type I reservations with high probability. Let ${B}^{(k)}_{t}$ denote the number of accepted Type I reservations not yet cancelled at time $t\le k$. The threshold takes the form
\begin{align}\label{eq:hat_B_t}
    \hat{B}^{(k)}_{t} \approx p^{(k)}(t) B^{(k)}_{t} + \text{safety stock},
\end{align}
where $p^{(k)}(t)$ is the confirmation rate and the safety stock accounts for randomness in cancellations. Similarly, we estimate the remaining resource capacity as
\begin{align}\label{eq:hat_C}
\hat{C}^{(k)} \approx C - \delta \lambda^{(1,k)} - \text{safety buffer},
\end{align}
where $\delta$ measures expected occupancy duration. A booking request at time $t$ for Day $k$ is rejected if $\hat{B}^{(k)}_{t} \ge \hat{C}^{(k)}$. The exact formulas with precise safety stock and buffer terms are given in \eqref{eq:hat_B_t_full} and \eqref{eq:hat_C_geometric}--\eqref{eq:hat_C_constant} in Appendix \ref{appendix::DASS_formulas}.

The safety stock and buffer terms are derived using Bernstein's inequality to upper-bound deviations from expected values with probability $1-\exp(-\iota)$, where $\iota = \Theta(\log(CT))$ is the confidence parameter. Specifically, the safety stock in $\hat{B}_t^{(k)}$ accounts for the binomial randomness in cancellations: since each of the $B_t^{(k)}$ reservations cancels independently with probability $1-p^{(k)}(t)$, we add a term scaling as $\sqrt{B_t^{(k)} \cdot \iota}$ to ensure $\hat{B}_t^{(k)}$ exceeds the true confirmed count with high probability. By controlling the reservation pool $B_t^{(k)}$ to stay within the conservative capacity $\hat{C}^{(k)}$, we ensure with high probability that Day $k$ does not overbook. Crucially, this guarantee depends only on Day $k$'s state—not on future days—enabling the decoupling formalized in Lemma \ref{lemma::decoupling} (Section \ref{sec:Stage II}).

Based on these quantities, we present our Decoupled Adaptive Safety Stock-I (\textbf{DASS-I}) policy in Algorithm \ref{alg:DASS_stage_I}. We denote this Stage I policy as $\hat{\pi}_1$. As ``decoupled'' suggests, our Stage I decision for Day $k$ does not rely on the status of other days—we construct adaptive safety stocks to hedge uncertainties of cancellation and departures using only Day $k$ information. These safety stocks eliminate correlations between occupancy durations on different days, discussed further in Section \ref{sec:Stage II}.
\bigskip
\begin{breakablealgorithm}
    \caption{Decoupled Adaptive Safety Stock-I (\textbf{DASS-I}) for Day $k$}
    \label{alg:DASS_stage_I}
    \begin{algorithmic}[1]
        \Require{Cancellation probabilities $p^{(k)}(\cdot)$, $q^{(1,k)}$, capacity $C$, occupancy parameters $q$ or $d$, confidence parameter $\iota$.}
        \State{Initialize: $B_{k-k_0}^{(k)}\leftarrow 0$.}
        \State{Compute estimated capacity $\hat{C}^{(k)}$ via \eqref{eq:hat_C_geometric} or \eqref{eq:hat_C_constant}.}
    \For{each reservation request arriving at time $t\in [k-k_0,k]$ for day $k$}
            \State{Compute booking threshold $\hat B_{t}^{(k)}$ based on current bookings $B_t^{(k)}$ via \eqref{eq:hat_B_t_full}.}
            \If{$\hat B_{t}^{(k)} < \hat{C}^{(k)}$}
                \State{Accept the reservation and update $B_t^{(k)}\leftarrow B_t^{(k)}+1$.}
            \Else
                \State{Reject the reservation.}
            \EndIf
        \EndFor
    \end{algorithmic}
\end{breakablealgorithm}
\bigskip

The thresholds $\hat{C}^{(k)}$ and $\hat{B}_t^{(k)}$ are designed as high-probability bounds: $\hat{C}^{(k)}$ conservatively estimates available capacity accounting for future departures, while $\hat{B}_t^{(k)}$ upper-bounds expected confirmed bookings with a safety margin for cancellation uncertainty.

\begin{example}[DASS-I in action]
Consider Day $k=10$ with $C=100$ rooms, geometric duration ($q=0.8$, so $\delta = 0.2$), and confidence parameter $\iota = 10$. At time $t=9.5$ (half a day before service), suppose we have accepted $B_t^{(10)}=80$ reservations for Day $10$, and the confirmation rate is $p^{(10)}(9.5)=0.9$.
\begin{itemize}
    \item \textit{Expected confirmations:} $0.9 \times 80 = 72$ guests expected to show up.
    \item \textit{Safety margin:} $\sqrt{B_t^{(10)} \cdot p^{(10)}(t)(1-p^{(10)}(t)) \cdot \iota} \approx \sqrt{80 \times 0.9 \times 0.1 \times 10} \approx 8.5$.
    \item \textit{Threshold:} $\hat{B}_t^{(10)} \approx 72 + 8.5 = 80.5$.
\end{itemize}
Suppose the conservative capacity is $\hat{C}^{(10)} = 82$ (accounting for ongoing occupancies from previous days). Since $\hat{B}_t^{(10)} = 80.5 < 82 = \hat{C}^{(10)}$, the next reservation request is \emph{accepted}. If instead $\hat{C}^{(10)} = 79$, then $\hat{B}_t^{(10)} = 80.5 \geq 79$, so the request is \emph{rejected} to avoid overbooking risk.
\end{example}

How does the algorithm design enable $O(1)$ regret? Three design choices in DASS-I are essential:
\begin{enumerate}[label=(\roman*)]
    \item \textit{Bernstein-based safety stock}: The $\sqrt{B_t^{(k)} \cdot \iota}$ term in $\hat{B}_t^{(k)}$ ensures that confirmed bookings stay below $\hat{C}^{(k)}$ with probability $1 - e^{-\iota}$, controlling overbooking risk. This is tighter than Hoeffding bounds when $B_t^{(k)}$ is moderate.

    \item \textit{Conservative capacity $\hat{C}^{(k)}$}: Reserving $\delta\lambda^{(1,k)}$ capacity for ongoing occupancies from previous days prevents cascading overbooking across days. The buffer accounts for geometric (memoryless) or constant ($C/d$ quota) duration structures.

    \item \textit{Day-independent thresholds}: Both $\hat{B}_t^{(k)}$ and $\hat{C}^{(k)}$ depend only on Day $k$ data—not on other days' states. This enables decoupling: Day $k$'s analysis is self-contained.
\end{enumerate}
Together, these bound Stage I regret by $O(CT \cdot e^{-\iota})$. Setting $\iota = \Theta(\log(CT))$ yields $O(1)$ total regret.

We characterize the performance of DASS-I in Theorem \ref{prop::StageI regret}. Without loss of generality, for geometrically distributed occupancy duration, we assume the service system runs with full capacity on Day $0$ to align with the busy season setting.
\begin{theorem}
 [Regret upper bound for Stage I]\label{prop::StageI regret}
    Suppose on Day $0$ the system is at full capacity under geometric duration, or there are at most $C/d$ customers that checked in $k$ days before Day $1$ for each $k\in[d]$. Then our policy $\hat{\pi}_1$ yields expected regret in Stage I bounded by
    \begin{align}
     \E\left[\sum_{k=1}^{T}\mathcal{R}_1^{(k)}(\hat{\pi}_1)\right]  &\le   5CT\exp(-\iota)\cdot \max_{k}r^{(k)}+3CT\exp(-\iota)\cdot \max_{k}\ell^{(k)}+{C}_0.
    \end{align}
\end{theorem}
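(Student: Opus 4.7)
The plan is to couple DASS-I with the hindsight-optimal Stage I policy $\pi_1^\star$ via concentration: on a high-probability ``good'' event the two policies feed essentially the same confirmed-booking vector $\{B^{(k')}\}_{k'\le k}$ into $\pi_2^\star$, so the per-day regret $\mathcal{R}_1^{(k)}$ vanishes, and on the complement we use a crude per-day bound of order $C(\max_k r^{(k)}+\max_k \ell^{(k)})$.

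First, for each Day $k$ I would define a good event $\mathcal{E}^{(k)}$ as the intersection of a constant number of Bernstein-type concentration inequalities mirroring the functional forms of \eqref{eq:hat_B_t} and \eqref{eq:hat_C_t}: (i) at every Stage I arrival time $t$, the number of eventually non-cancelled reservations among those accepted is sandwiched between $p^{(k)}(t)B^{(k)}_t$ and $\hat B^{(k)}_t$; (ii) the realized number of Stage II confirmations concentrates around $q^{(1,k)}B^{(k)}$, so that $\hat C^{(k)}$ is a tight upper bound on the confirmed check-ins induced by a given acceptance level; (iii) the number of units carried over from prior days (i.e.\ the realized departures) concentrates around $(1-q)C$ in the geometric case and $C/d$ in the constant case, matching the ``Estimated departures'' term in \eqref{eq:hat_C_t}. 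The quadratic expressions in \eqref{eq:hat_B_t}--\eqref{eq:hat_C_t} are precisely the inversions of Bernstein's inequality applied to these indicator sums, so each bound holds with failure probability at most $\exp(-\iota)$.

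Second, I would argue that on $\mathcal{E}^{(k)}$ the pair $(\hat\pi_1,\pi_2^\star)$ produces the same Day-$k$ outcome as $(\pi_1^\star,\pi_2^\star)$. The safety-stock definition ensures that DASS-I never accepts beyond the level at which Day-$k$ check-ins plus prior-day carryover would breach $C$, so no overbooking occurs on $\mathcal{E}^{(k)}$; the busy-season Assumption~\ref{assump:busy_season}, which is calibrated exactly to absorb the deviations in (i)--(iii), guarantees that there are enough arrivals for DASS-I to reach the threshold so no idling occurs either. In that case Day $k$ fills its capacity exactly and the offline optimum cannot do better, giving $\mathcal{R}_1^{(k)}=0$. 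The memoryless property of the geometric distribution (or the bounded lookback $d$ in the constant case) is what allows the safety stocks to depend only on Day-$k$ information; this is the ``decoupling'' in the algorithm's name, and the initial-condition hypothesis of the theorem enters here to seed the recursion for the first $k_0$ (or $d$) days, contributing the additive constant $C_0$.

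Finally, on $(\mathcal{E}^{(k)})^c$ the Day-$k$ loss difference is trivially at most $C\max_k r^{(k)}+C\max_k \ell^{(k)}$. A union bound over the constitutive Bernstein inequalities in $\mathcal{E}^{(k)}$ gives the $5\exp(-\iota)$ and $3\exp(-\iota)$ failure probabilities (the coefficients $5$ and $3$ come from the number of distinct concentration inequalities associated respectively with idling events, namely those involving $r^{(k)}$, and overbooking events, namely those involving $\ell^{(k)}$). Summing over $k=1,\dots,T$ yields the stated bound. The main obstacle is the decoupling step: one must verify that the prior-day carryover into Day $k$ under $\hat\pi_1$ concentrates sharply enough using only Day-$k$-local information, despite being a measurable function of the entire past history. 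For geometric durations this follows from memorylessness plus a single-step Bernstein bound conditional on the current occupancy level; for constant durations one must track a sliding window of $d$ days and use the initialization assumption as the base case of an induction, which is also the source of the $C_0$ term.
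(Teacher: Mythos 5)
Your proposal is correct and follows essentially the same route as the paper: define per-day Bernstein-based good events under which the safety stocks prevent overbooking and the busy-season walk-in rate prevents idling (so the Day-$k$ loss is zero and hence $\mathcal{R}_1^{(k)}\le 0$), use the crude $C(\max_k r^{(k)}+\max_k\ell^{(k)})$ bound on the complement, and union-bound to get the coefficients $5$ and $3$, with $C_0$ arising from the first $d$ days under the constant-duration capacity split. One small remark: your opening framing of ``coupling'' $\hat\pi_1$ with $\pi_1^\star$ so they feed the same booking vector into $\pi_2^\star$ is not what is actually needed (and would be hard to execute, since $\pi_1^\star$ is not characterized); the argument you then execute --- and the one the paper uses --- is simply that $(\hat\pi_1,\pi_2^\star)$ attains the nonnegative loss's minimum of zero on the good event, so the hindsight optimum cannot do better.
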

Here ${C}_0=0$ for geometric duration and ${C}_0=\sum_{k=1}^{d}\frac{C(d-k)}{d}\cdot r^{(k)}$ for constant duration. The proof is in Appendix \ref{appendix::stage I}. Leveraging the safety stock in \eqref{eq:hat_B_t} as a buffer, we avoid overbooking with high probability. When the walk-in rate $\lambda^{(2,k)}$ exceeds this buffer, idling loss from vacant rooms is compensated by walk-in arrivals on most days, yielding constant regret.

% Thus, when $\iota=\log (CT)$, we have a constant regret of ${O}(1)$ expected regret for Stage I. To give more direct intuition on how the quantities accepted reservations $B^{(k)}$ and walk-in rates $\lambda^{(2,k)}$ interplay, we run a simulation with $T=100$ to compare the regret when $p^{(k)} = \frac{1}{2},C=200,r^{(k)}=\loss^{(k)}=1$, geometric occupancy duration parameter $q=\frac{1}{2}.$ and confirmation timing $\gamma=0$ (thus all online algorithms run offline optimal policy in Stage II). 
% \begin{figure}
% \centering\includegraphics[width=0.96\linewidth]{./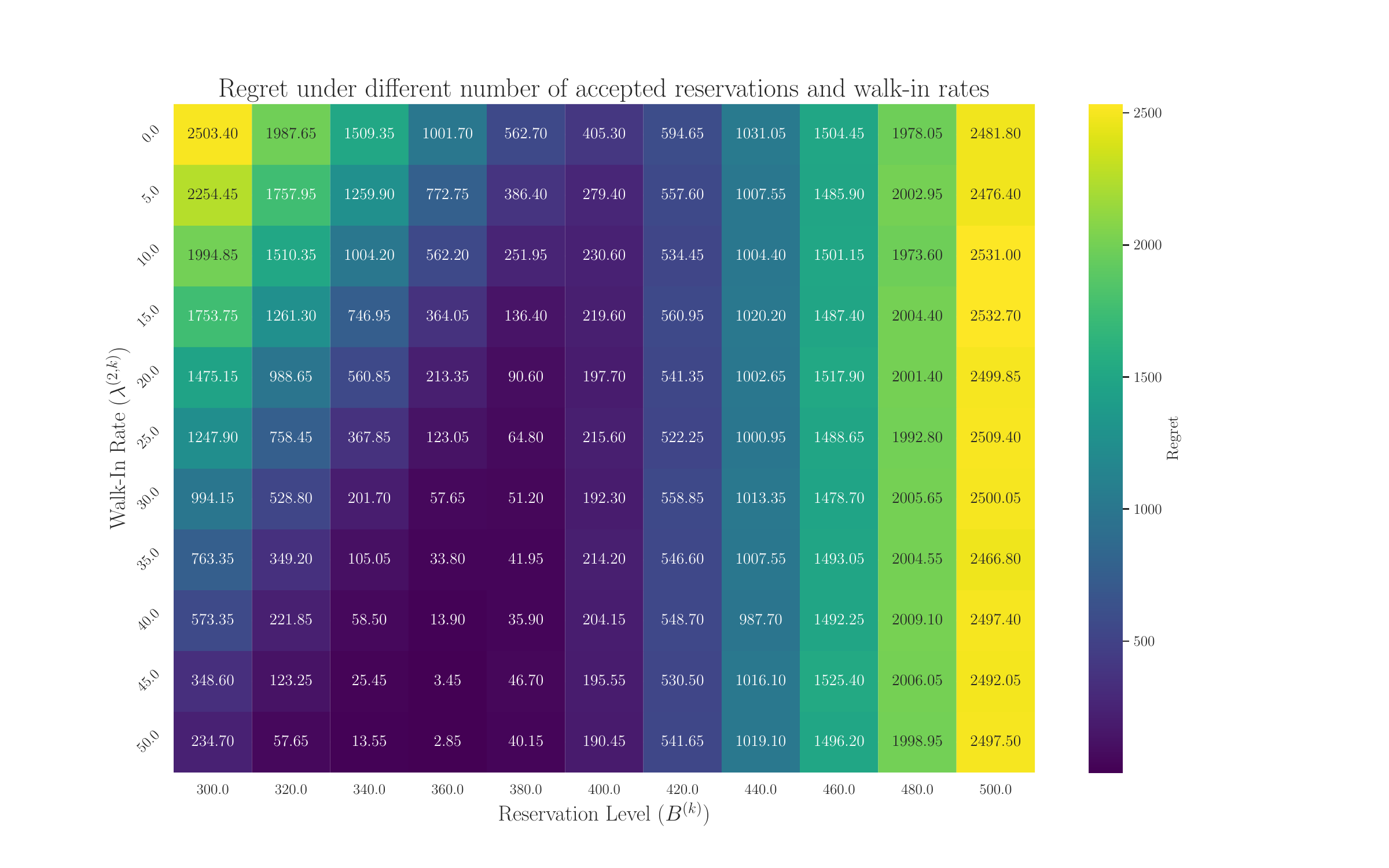}
%     \caption{Regret under different accepted reservation $B^{(k)}$ walk-in rates $\lambda_1^{(l)}$}
%     \label{fig:heatmap}
% \end{figure}
% \begin{figure}
%     \centering
% \includegraphics[width=0.96\linewidth]{./}
%     \caption{Regret under different accepted reservation $B^{(k)}$ walk-in rates $\lambda_1^{(l)}$}
%     \label{fig:3d}
% \end{figure}

\section{Regret Guarantee and Sensitivity Analysis for Stage II}\label{sec:Stage II}
We first elaborate on the decoupling technique in \textbf{DASS-I}, showing that with high probability, the original reusable resource allocation problem decomposes into a series of non-reusable resource allocation subproblems. We then present the Stage II allocation algorithm \textbf{DASS-II}, along with its regret guarantee and sensitivity analysis of confirmation call timing.

% In this section, we first give details on the decoupling used in DASS-I, which guarantees in high probability that the original reusable resource allocation problem can be decoupled into a series of non-reusable resource allocation subproblems. Starting from this, we give the Stage II allocation algorithm DASS-II and its overall regret guarantee, along with sensitivity analysis of the confirmation call timing.

\subsection{Decision Decoupling}

\textbf{Why decoupling matters.} Multi-day occupancy creates dependencies: accepting a guest on Day $k$ reduces capacity on Days $k+1, k+2, \ldots$ until the guest departs. In principle, optimizing Day $k$ requires anticipating all future days—an exponentially complex task. The key insight of our approach is that under DASS-I, these dependencies vanish with high probability. Each day can be optimized independently, reducing a $T$-dimensional problem to $T$ independent single-day problems. This dramatic simplification is possible because (i) DASS-I's safety stocks ensure overbooking events are rare, and (ii) the memoryless property of geometric duration (or the fixed quota $C/d$ for constant duration) makes future capacity independent of past acceptance decisions.

Under DASS-I, Stage II decisions can be made independently for each day, treating the problem as if resources are non-reusable. The key insight is that our Stage I policy controls non-cancelled customers tightly enough to prevent overbooking, while the memoryless property of geometric duration (or the fixed capacity $C/d$ per day for constant duration) eliminates dependencies between days. The following lemma formalizes this decoupling result.
 \begin{lemma}\label{lemma::decoupling}
     Suppose the walk-in rate requirement \eqref{equ::require_lambda_appendix} holds for any $k\in[T]$. Under our Stage I \textbf{DASS-I} policy $\hat{\pi}_1$, with probability at least $1-8T\exp(-\iota)$, on any Day $k$, the decoupled policy
      \begin{align*}
     \hat{\pi}_2^{\star}=\pi_{2}^{(1)\star} \times \pi_{2}^{(2)\star} \times \cdots \times \pi_{2}^{(T)\star}
 \end{align*}
 achieves constant regret relative to the overall hindsight optimal policy $\pi_2^{\star}$, where both apply \textbf{DASS-I} in Stage I:
 \begin{align*}
  \sum_{k=1}^T\brk{\Lossii^{(k)}(\hat{\pi}^{\star}_2,\{B^{(k')}(\hat\pi_1))\}_{k'=1}^{k})-\Lossii^{(k)}(\pi^{\star}_2,\{B^{(k')}(\hat\pi_1))\}_{k'=1}^{k})}\le C_0.
 \end{align*}
 Here ${C}_0=0$ for geometric duration and ${C}_0=\sum_{k=1}^{d}\frac{C(d-k)}{d}\cdot r^{(k)}$ for constant duration. For $k=1,2,\dots,T$, the decoupled single-day offline optimal policy $\hat\pi_2^{(k)\star}$ on Day $k$ is
 \begin{align*}
    \begin{aligned}    &\hat{\pi}_2^{(k)\star}=\argmin_{\pi_2\in\Pi^{(k)\star}_2} \Lossii^{(k)}(\pi^{(k)}_2, \{B^{(k')}(\hat\pi_1)\}_{k'=1}^{k}) , \\
       & \text{s.t.} \quad  B^{(k)}(\pi_1) = \sum_{i=1}^{\Lambda^{(1,k)}(k-k_0,k)} Y_i^{(0,k)} z_i^{(0,k)}, \\
& \sum_{i=1}^{B^{(k)}(\pi_1)} Y_i^{(1,k)} z_i^{(1,k)}   + \sum_{i=1}^{\lambda^{(2,k)}} z_i^{(2,k)}  \le \Tilde{C}^{(k)},
    \end{aligned}
 \end{align*}
where $\Tilde{C}^{(k)}$ is a predetermined room capacity defined as 
\begin{align}
    \Tilde{C}^{(k)}:= \begin{cases}
  &C-  \sum_{k'=1}^{k-1} \Big( \sum_{i=1}^{B^{(k')}(\hat\pi_1)} Y_i^{(1,k')} z_i^{(1,k')} \mathbbm{1}\{D_i^{(1,k')} \geq k-k'\}  \\&\quad+ \sum_{i=1}^{\Lambda^{(2,k')}[0,1]} z_i^{(2,k')} \mathbbm{1}\{D_i^{(2,k')} \geq k-k'\} \Big) ~~(\textbf{Geometric})  \\
 & \frac{C}{d}~~(\textbf{Constant})
\end{cases}  \label{equ::allocated capacity}
\end{align}
and $\pi_2^{(k)}\in\Pi^{(k)\star}_2$ is taken over all offline policies with full access to Stage I and Stage II information up to Day $k$, including results from previous decisions before Day $k$: $B^{(k')}(\hat\pi_1)$, $z_i^{(1,k')}$ and $z_i^{(2,k')}$ for $k' <k$.
 \end{lemma}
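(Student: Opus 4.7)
The plan is to identify a high-probability ``good event'' $\mathcal{E}$ under which the two Stage~II policies make decisions whose objective values differ by at most $C_0$, and then to bound $\Pr(\mathcal{E}^c)$ by a union bound over days and concentration events. Both policies are applied on top of the same Stage~I realization produced by $\hat{\pi}_1$, so we only need to compare them conditionally on that trajectory.

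First, I would define $\mathcal{E}$ as the intersection over $k \in [T]$ of several per-day concentration events. Concretely, for each $k$ I use Chernoff / Bernstein bounds to control (i) the deviation of the number $B^{(k)}$ of non-cancelled bookings from its mean, (ii) the deviation of the number of Stage~II check-ins of accepted reservations (a sum of $B^{(k)}$ independent Bernoulli$(q^{(1,k)})$ variables), (iii) the deviation of the walk-in count $\Lambda^{(2,k)}[0,1]$ from $\lambda^{(2,k)}$, and (iv) in the geometric case, the number of rooms still occupied from previous days' check-ins (a sum of independent survival indicators). The safety-stock constants baked into \eqref{eq:hat_B_t}--\eqref{eq:hat_C_t} are calibrated precisely so each of these events fails with probability at most $\exp(-\iota)$. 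Combining the eight relevant events and applying a union bound over the $T$ days gives $\Pr(\mathcal{E}^c)\le 8T\exp(-\iota)$, matching the statement.

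Second, I would argue that on $\mathcal{E}$ the decoupled and full hindsight optima coincide up to $C_0$. For the geometric case, memorylessness of the residual occupancy implies that the system state evolves only through the \emph{number} of ongoing customers rather than their identities; combined with the fact that on $\mathcal{E}$ the quantity $\tilde{C}^{(k)}$ in \eqref{equ::allocated capacity} is exactly the residual capacity available on Day~$k$ under $\hat{\pi}_1$, the single-day subproblem solved by $\hat{\pi}_2^{(k)\star}$ is precisely the conditional problem faced by $\pi_2^{\star}$ on Day~$k$. The only potential gap is that $\pi_2^{\star}$ could ``save'' capacity on Day~$k$ to earn revenue on subsequent days; however, busy-season Assumption~\ref{assump:busy_season} together with $\mathcal{E}$ guarantees that every later day already has enough combined Type~I and Type~II demand to saturate capacity, so saving cannot strictly improve the Stage~II objective. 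Hence $C_0=0$ in this regime. For constant duration, the predetermined allocation $C/d$ per day produces the correct steady-state utilization after a warm-up window of $d$ days, and the gap from the joint hindsight optimum equals the revenue lost during the first $d$ days when fewer than $C/d$ new customers have accumulated, yielding $C_0 = \sum_{k=1}^{d}\frac{C(d-k)}{d}\,r^{(k)}$.

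The main obstacle is the no-saving argument in the geometric case: making rigorous the informal claim that busy seasons prevent $\pi_2^{\star}$ from benefitting by deferring an acceptance. I plan to handle this by an exchange argument in which any ``saved'' slot on Day~$k$ is swapped for an immediate Stage~II acceptance; on $\mathcal{E}$ the downstream displacement effect is controlled by a geometric tail of short expected length and is absorbed by the Stage~II concentration margin built into $\hat{C}^{(k)}$ via \eqref{eq:hat_C_t}. This absorption relies crucially on the busy-season lower bounds \eqref{eq:require_booking}--\eqref{equ::require lambda} on $\lambda^{(1,k)}$ and $\lambda^{(2,k)}$, which by construction leave no slack that a forward-looking policy could exploit over any day on $\mathcal{E}$.
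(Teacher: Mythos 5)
Your high-level skeleton (a per-day good event built from Bernstein/Chernoff bounds, then a union bound giving $1-8T\exp(-\iota)$) matches the paper: the relevant events are exactly the no-overbooking, high-booking, and no-idling events already constructed in the proof of Theorem \ref{prop::StageI regret}. But the way you compare the two Stage II policies on the good event is genuinely different from the paper, and it is the one step you have not actually carried out. You propose to show that $\hat{\pi}_2^{\star}$ and $\pi_2^{\star}$ \emph{coincide} up to $C_0$ via a "no-saving" exchange argument — swapping any capacity that the forward-looking optimum defers on Day $k$ for an immediate acceptance and controlling the downstream displacement. You yourself flag this as the main obstacle, and it is a real one: the displaced customer's occupancy duration couples Day $k$'s decision to overbooking risk on Days $k+1,\dots$, so the exchange must be verified against future Type I check-ins, not just against whether future days "saturate." As written, this is a plan, not a proof.

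The paper avoids the exchange argument entirely by exploiting the sign of the objective. Since $\Lossii^{(k)}$ is a sum of an overbooking penalty and an idle-room penalty, it is nonnegative for every feasible policy, so $\Lossii^{(k)}(\pi_2^{\star},\cdot)\ge 0$ pointwise. The proof of Theorem \ref{prop::StageI regret} already establishes that on the joint good event the explicit single-day policies ($\pi_2^{\star,\mathrm{geo}}$ or $\pi_2^{\star,\mathrm{const}}$, which are exactly $\hat{\pi}_2^{\star}$) incur \emph{absolute} total loss $0$ in the geometric case and at most $\sum_{k=1}^{d}\frac{C(d-k)}{d}\cdot r^{(k)}$ in the constant case (the warm-up cost you also identify). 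Hence
\begin{align*}
\sum_{k=1}^T\brk{\Lossii^{(k)}(\hat{\pi}^{\star}_2,\cdot)-\Lossii^{(k)}(\pi^{\star}_2,\cdot)}\le \sum_{k=1}^T\Lossii^{(k)}(\hat{\pi}^{\star}_2,\cdot)\le C_0
\end{align*}
on that event, with no need to reason about what the hindsight optimum could gain by deferring. I would recommend replacing your exchange argument with this observation: it turns the hardest step of your proposal into a one-line consequence of the Stage I analysis, and it is the only route for which the stated $8T\exp(-\iota)$ failure probability has actually been verified.
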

 The proof relies on two key observations. First, DASS-I's safety stock construction ensures that the number of confirmed Type I customers on Day $k$ stays within the conservative threshold $\hat{C}^{(k)}$ with probability at least $1-8T\exp(-\iota)$. This tight control prevents overbooking while preserving a large proportion of Type I customers for Stage II. Second, under geometric duration, the memoryless property implies that on Day $k$, the number of ongoing occupancies from previous days is independent of Day $k$'s acceptance decisions—each customer departs with probability $q$ regardless of when they checked in. For constant duration $d$, we enforce a fixed quota of at most $C/d$ acceptances per day, which similarly decouples across days by capping total occupancy at $C$. Together, these properties ensure that the single-day optimizer $\hat{\pi}_2^{(k)\star}$ acting on residual capacity $\tilde{C}^{(k)}$ achieves near-global-optimality. Detailed proof is in Appendix \ref{appendix::decoupling}. For constant duration, $C_0$ quantifies loss due to the hard capacity constraint of providing $C/d$ rooms on each day, which may cause idle rooms during the first $d$ days.

\begin{example}[Decoupling in action]
Consider Day $k=10$ with $C=500$, geometric duration ($\delta=0.2$), and suppose Days 1--9 have already been processed under DASS-I. \textit{Without decoupling}, optimizing Day 10 would require tracking how many guests from each previous day are still occupying rooms---a state space of size $O(C^9)$ in the worst case. \textit{With decoupling}, we observe that:
\begin{itemize}
    \item DASS-I ensures each Day $k' < 10$ accepted at most $\hat{C}^{(k')}$ confirmed guests (with high probability), so overbooking cascades are rare.
    \item Under geometric duration, each guest from Day 9 departs with probability $q=0.8$ before Day 10, independently of when they arrived. This memoryless property means ongoing occupancies from Days 1--9 sum to a random variable independent of Day 10's decisions.
\end{itemize}
The residual capacity $\tilde{C}^{(10)}$ (observed at the start of Day 10) captures all relevant history. Day 10's optimization reduces to: ``given $\tilde{C}^{(10)} = 410$ available rooms and $B^{(10)} = 450$ reservations, how should we allocate rooms to Type I arrivals and walk-ins?'' This single-day problem has a simple threshold solution, avoiding the exponential state space of the joint problem.
\end{example}

  This optimal decoupled single-day policy $\pi_{2,k}^{\star}$ makes Stage II decisions on Day $k$ that greedily maximize revenue on Day $k$ with access to full customer information,\footnote{The revenue is maximized under a tighter capacity constraint for constant duration.} regardless of potential effects on future days caused by occupancy durations. The single-day optimal policy has a simple threshold structure: it prioritizes Type I customers and allocates remaining capacity to walk-ins. See Appendix \ref{appendix::Stage II} for the explicit characterization and proof.

Lemma~\ref{lemma::decoupling} shows that optimizing each day independently achieves near-global-optimality. This motivates DASS-II: instead of solving a complex multi-day problem, we design a policy that approximates the single-day offline optimal $\hat{\pi}_2^{(k)\star}$ using only online information. The residual capacity $\tilde{C}^{(k)}$ (known at the start of Day $k$) serves as a single-day budget, and DASS-II tracks expected arrivals against this budget.

\subsection{Stage II Decision: Regret and Sensitivity Analysis}
We now present our Decoupled Adaptive Safety Stock-II (\textbf{DASS-II}) policy for Stage II—the check-in control process, which exploits the decoupling established above. For a fixed Day $k$, suppose we received $B^{(k)}$ non-cancelled Type I customers in advance. For any time $u \in [0,v)$ before the confirmation call, denote the number of Type I customers who checked in and those who cancelled during $[0,u]$ by $B^{(k)}_{1,u}$ and $B^{(k)}_{2,u}$, and denote $W^{(k)}_{1,u}$ as the number of accepted walk-in customers during this period. We propose an online policy $\hat\pi_2$ that uses conditional expectations of total show-ups to decide whether to accept a new walk-in customer. After time $v$, the confirmation call reveals the exact number of Type I customers that will check in after time $v$, denoted $B^{(k)}_{3,v}$.

The expected show-ups at time $u$ is computed as
\begin{align}\label{equ::StageII standard simple}
\hat{N}^{(k)}_{u} \approx \begin{cases}
    \text{(Type I check-ins)} + q^{(1,k)} \cdot \text{(pending reservations)} \\
    \quad + \text{(Type II check-ins)} + \alpha \cdot \text{(future walk-ins)}, & u < v,\\
    \text{(confirmed Type I)} + \text{(Type II check-ins)}, & u \ge v,
\end{cases}
\end{align}
where $q^{(1,k)}$ is the confirmation rate and $0<\alpha <1$ is a tunable parameter representing the approximate proportion of walk-in customers we accept. The precise formula is given in \eqref{equ::StageII standard full}.

\begin{figure}[ht]
\centering
\begin{tikzpicture}[
    node distance=2cm,
    box/.style={rectangle, draw, rounded corners, minimum width=2.5cm, minimum height=1cm, align=center, font=\small},
    decision/.style={diamond, draw, aspect=2, inner sep=2pt, align=center, font=\small},
    arrow/.style={-{Latex[length=2mm]}, thick},
    line/.style={thick}
]

% Timeline
\draw[ultra thick, ->, gray] (-1, 0) -- (10, 0) node[right] {Stage II Time $u$};
\draw[thick, gray] (4.5, 0.2) -- (4.5, -0.2) node[below] {$v$ (Confirmation)};

% Phase labels
\node[above, font=\bfseries] at (1.5, 0.2) {Phase 1: Estimation};
\node[above, font=\bfseries] at (7.5, 0.2) {Phase 2: Certainty};

% Walk-in Arrival
\node[box, fill=blue!10] (arrival) at (4.5, 2.5) {New Walk-in Request\\ at time $u$};

% Decision Logic
\node[decision] (check) at (4.5, -3.5) {$\hat{N}_u^{(k)} < \tilde{C}^{(k)}$?};
\node[box, fill=green!10] (accept) at (2, -5.5) {Accept};
\node[box, fill=red!10] (reject) at (7, -5.5) {Reject};

% Logic Paths
\draw[arrow] (arrival) -- (check);
\draw[arrow] (check) -| node[above, near start] {Yes} (accept);
\draw[arrow] (check) -| node[above, near start] {No} (reject);

% Estimator Details (Left - Pre-confirmation)
\node[align=left, font=\scriptsize, anchor=east] (est1) at (3.5, -1.5) {
    \textbf{Estimate} $\hat{N}_u^{(k)}$ uses:\\
    $\bullet$ Confirmed: $B_{1,u}^{(k)}$\\
    $\bullet$ Pending: $\mathbf{q^{(1,k)} \cdot B_{2,u}^{(k)}}$\\
    $\bullet$ Future Walk-ins: $\alpha \cdot \mathbb{E}[W]$
};
\draw[dashed, gray] (est1) -- (2, 0);

% Estimator Details (Right - Post-confirmation)
\node[align=left, font=\scriptsize, anchor=west] (est2) at (5.5, -1.5) {
    \textbf{Exact} $\hat{N}_u^{(k)}$ uses:\\
    $\bullet$ Total Confirmed: $B_{\text{final}}^{(k)}$\\
    $\bullet$ Pending Risk: \textbf{Resolved}\\
    $\bullet$ Only Walk-in uncertainty remains
};
\draw[dashed, gray] (est2) -- (7, 0);

\end{tikzpicture}
\caption{Logic flow of DASS-II Check-in Control. When a walk-in arrives, the policy compares the expected total show-ups $\hat{N}_u^{(k)}$ against the residual capacity $\tilde{C}^{(k)}$. Before confirmation (left), $\hat{N}_u^{(k)}$ relies on probabilistic estimates of pending reservations. After confirmation (right), the exact number of Type I arrivals is known, eliminating the primary source of uncertainty and allowing tighter capacity utilization.}
\label{fig::dass_II_schematic}
\end{figure}

\noindent\textit{Choosing $\alpha$.} Our theoretical analysis requires $\alpha < 1/2$ to ensure the regret bound in Theorem \ref{prop:HO_gap}. In practice, we recommend setting $\alpha$ close to 1/2 as a default. The optimal choice of $\alpha$ scales approximately with $\log(\loss^{(k)}/r^{(k)})$: when the overbooking penalty $\loss^{(k)}$ is large relative to revenue $r^{(k)}$, increase $\alpha$ toward $1/2$ to reduce overbooking risk by lowering the expected Type II acceptance rate. Conversely, when $\loss^{(k)}/r^{(k)}$ is small, a smaller $\alpha$ (e.g., $\alpha \approx 0.3$) increases Type II acceptance and improves capacity utilization. This trade-off is analyzed in detail in the proof of Theorem \ref{prop:HO_gap} (Appendix \ref{appendix::Stage II}).

We accept the walk-in customer arriving at time $u$ only if $\hat{N}^{(k)}_u < \Tilde{C}^{(k)}$. For any Type I customer arriving at time $u$, we offer a room if the number of newly occupied rooms $B^{(k)}_{1,u}+W^{(k)}_{1,u}$ is less than the room capacity $\Tilde{C}^{(k)}$. Algorithm \ref{alg:DASS_stage_II} details our decision process, and Figure \ref{fig::dass_II_schematic} provides a visual flowchart of this decision logic, highlighting the shift from estimation to certainty at the confirmation call.

\bigskip
\begin{breakablealgorithm}
    \caption{Decoupled Adaptive Safety Stock-II (\textbf{DASS-II}) for Day $k$}
    \label{alg:DASS_stage_II}
    \begin{algorithmic}[1] % The number tells where the line numbering should start
        % \Require{Arrival rates $\{\lambda^{(1,k)}(\cdot)\}_{k=1}^T,\{\lambda^{(2,k)}(\cdot)\}_{k=1}^T$, distribution of cancellation $\{p^{(k)}(\cdot)\}_{k=1}^T$, $\{q^{(1,k)}(\cdot)\}_{k=1}^T$ , reward $\{r^{(k)}\}_{k=1}^T$, overbooking loss $\{\loss^{(k)}\}_{k=1}^T$, capacity $C$, timr horizon $T$.}
        \Require{Arrival rates $\lambda^{(2,k)}(\cdot)$ of Type II, distribution of cancellation $q^{(1,k)}(\cdot)$, occupancy duration parameter $q$ ($d$) reward $r^{(k)}$, overbooking loss $\loss^{(k)}$, capacity $C$, time horizon $T$.}
        \State{Receive $B^{(k)}$ non-cancelled reservations from Algorithm \ref{alg:DASS_stage_I}.}
        \State{Set the room capacity $\Tilde{C}^{(k)}$ as per \eqref{equ::allocated capacity}.}
    \For{$i=1,2,\dots,\Lambda^{(1,k)}[k-k_0,k]$}
            \State{Observe the \( i \)-th reservation request for day \( k \) at time $u_{i}=X_i^{(2,k)}$.}
            \State{Provide rooms for Type I customers who checked in during $[u_{i-1},u_i)$ ($u_0=0$).}
          \State{Compute expected show-ups $\hat{N}^{(k)}_{u}$ via \eqref{equ::StageII standard full}.}
            \If{\( \hat{N}^{(k)}_{u} < \Tilde{C}^{(k)}  \)}
                \State{Accept the \(i \)-th reservation request.}
            \EndIf
        \EndFor
                
    \end{algorithmic}
\end{breakablealgorithm}
\bigskip

The expected show-ups $\hat{N}^{(k)}_{u}$ serves as an adaptive safety stock to avoid wrong decisions compared to the decoupled single-day offline optimal policy $\pi^{(k)\star}$. This approach is inspired by LP-based approximation in non-reusable resource allocation problems (e.g., \citet{jasin2012re}, \citet{rusmevichientong2020dynamic}, \citet{vera2021bayesian}). In reusable resource allocation, such approximations are generally less accurate due to additional uncertainties from cancellations and occupancy durations. However, the decoupling technique enables us to approximate the complex system through single-day decomposition and corresponding estimators up to constant order, bounding wrong decisions relative to the hindsight optimal policy with high confidence.

\begin{example}[DASS-II in action]
Consider Day $k$ with residual capacity $\tilde{C}^{(k)} = 400$, confirmation time $v = 0.5$, confirmation rate $q^{(1,k)} = 0.9$, and walk-in rate $\lambda^{(2,k)} = 50$ (uniform over $[0,1]$). Suppose we received $B^{(k)} = 420$ non-cancelled reservations from Stage I. At time $u = 0.3$ (before confirmation), a walk-in customer arrives. At this point:
\begin{itemize}
    \item \textit{Type I check-ins so far:} $B^{(k)}_{1,0.3} = 110$ guests have checked in.
    \item \textit{Pending reservations:} $420 - 110 = 310$ still pending; expected to show: $0.9 \times 310 = 279$.
    \item \textit{Walk-ins accepted:} $W^{(k)}_{1,0.3} = 10$ walk-ins already accepted.
    \item \textit{Future walk-ins:} With $\alpha = 0.4$, expected future acceptance is $0.4 \times 50 \times (1-0.3) = 14$.
    \item \textit{Expected total:} $\hat{N}^{(k)}_{0.3} \approx 110 + 279 + 10 + 14 = 413$.
\end{itemize}
Since $\hat{N}^{(k)}_{0.3} = 413 > 400 = \tilde{C}^{(k)}$, the walk-in is \emph{rejected}. After confirmation at $v = 0.5$, the manager learns exactly how many pending reservations will arrive, reducing uncertainty and enabling more precise acceptance decisions.
\end{example}

\begin{example}[DASS-II after confirmation]
Continuing the previous example, suppose confirmation at $v=0.5$ reveals that of the 310 pending reservations, exactly 270 will show up (the rest cancelled). At time $u=0.6$ (after confirmation), another walk-in arrives:
\begin{itemize}
    \item \textit{Type I status:} 110 already checked in + 270 confirmed arrivals = 380 total Type I guests.
    \item \textit{Walk-ins accepted:} $W^{(k)}_{1,0.6} = 15$ (including some after confirmation).
    \item \textit{Expected total:} $\hat{N}^{(k)}_{0.6} = 380 + 15 = 395$.
\end{itemize}
Since $\hat{N}^{(k)}_{0.6} = 395 < 400 = \tilde{C}^{(k)}$, the walk-in is \emph{accepted}. Notice the key difference: before confirmation, $\hat{N}$ included the uncertain term $q^{(1,k)} \times (\text{pending})$; after confirmation, this term becomes exact. The formula in \eqref{equ::StageII standard simple} simplifies because no estimation is needed for Type I arrivals---only the remaining walk-in budget matters.
\end{example}

% The weighted expectation in \eqref{equ::StageII standard} is an estimator that has high confidence to make correct decision compared to the decoupled single-day offline optimal $\pi^{(k)\star},$ which is motivated by the estimators in non-reusable resource allocation problems (see e.g. \citealt{jasin2012re}, \citealt{rusmevichientong2020dynamic},\citealt{vera2021bayesian}). This estimator is in general inaccurate in reusable resource allocation problems due to the additional uncertainties of cancellation and occupancy durations. However, the introduced decoupling technique enables us to approximate the intricate system with the single-day decomposition and the corresponding estimators up to constant order, which helps us to bound the wrong decisions compared to the hindsight optimal policy with high confidence. 

To demonstrate the power of decoupling and dynamic threshold policy, we conduct sensitivity analysis of confirmation timing and show that Stage II regret decays exponentially with the interval length between confirmation timing and day end. Consequently, our algorithm achieves constant regret for Stage II decision \eqref{equ::Stage II regret} with only logarithmic time left for confirmation each day. The following theorem quantifies this exponential decay in terms of walk-in rates before and after the confirmation call.

\begin{theorem}[Stage II regret bound]
\label{prop:HO_gap}
   For any $k\in[T]$, given the number of non-cancelled bookings $B^{(k)}$ from Stage I, by setting $\alpha<1/2$, the expected Stage II loss between $\hat{\pi}_2$ and $\hat{\pi}_2^{\star}$ on Day $k$ is bounded by
    \begin{align*}
        \E_{\Fii^{(k)}}\left[\Lossii^{(k)}(\hat{\pi}_2,\{B^{(k')}(\hat\pi_1))\}_{k'=1}^{k})-\Lossii^{(k)}(\hat{\pi}^{\star}_2,\{B^{(k')}(\hat\pi_1))\}_{k'=1}^{k})\right] = O\left(\lambda^{(2,k)}_{[0,v]}\cdot \exp\left(-\Omega(\lambda^{(2,k)}_{[v,1]})\right)\right),
    \end{align*}
    where $\lambda^{(2,k)}_{[0,v]}=\int_{0}^{v}\lambda^{(2,k)}(s) ds$ and $\lambda^{(2,k)}_{[v,1]}=\int_{v}^{1}\lambda^{(2,k)}(s) ds$ are the walk-in rates before and after confirmation time $v$, respectively. The regret decays exponentially with the post-confirmation walk-in rate $\lambda^{(2,k)}_{[v,1]}$. See Appendix \ref{appendix::Stage II} for the exact bound and a tighter bound when $ \gamma^{(k)}(s)\propto \lambda^{(2,k)}(s)$.
\end{theorem}

\noindent\textit{Proof intuition.} The key difficulty is that DASS-II must estimate the optimal threshold $\tilde{C}^{(k)\star}$ without knowing how many Type I customers will arrive before confirmation time $v$. The proof bounds estimation errors using martingale concentration inequalities applied to the Type I arrival process—since arrivals follow a Poisson process with known distribution $\gamma^{(k)}$, we can establish high-probability bounds on the deviation between predicted and actual Type I arrivals. The exponential decay arises because after confirmation, the policy observes $\lambda^{(2,k)}_{[v,1]}$ walk-in customers arriving in $[v,1]$—a Poisson process generating $\Theta(\lambda^{(2,k)}_{[v,1]})$ samples. These samples drive the estimation error down exponentially via Chernoff-Hoeffding bounds, yielding the $\exp(-\Omega(\lambda^{(2,k)}_{[v,1]}))$ factor.

\begin{example}[Exponential decay with confirmation timing]
Consider $\lambda^{(2,k)} = 50$ walk-ins per day (uniformly distributed). The regret bound $O(\lambda^{(2,k)}_{[0,v]} \cdot \exp(-\Omega(\lambda^{(2,k)}_{[v,1]})))$ behaves as follows for different confirmation times:
\begin{center}
\begin{tabular}{cccl}
$v$ & $\lambda^{(2,k)}_{[0,v]}$ & $\lambda^{(2,k)}_{[v,1]}$ & Regret bound (order of magnitude) \\
\hline
0.5 & 25 & 25 & $25 \cdot e^{-25} \approx 10^{-9}$ (negligible) \\
0.8 & 40 & 10 & $40 \cdot e^{-10} \approx 2 \times 10^{-3}$ (small) \\
0.95 & 47.5 & 2.5 & $47.5 \cdot e^{-2.5} \approx 4$ (noticeable)
\end{tabular}
\end{center}
The exponential decay dominates: even with late confirmation ($v=0.8$), regret remains negligible. Only with very late confirmation ($v \geq 0.95$) does regret become substantial. This phase transition---from negligible to noticeable regret as $v$ crosses a threshold---is consistent with the empirical results in Section~\ref{sec:numerical_experiments}.
\end{example}

The proof is detailed in Appendix \ref{appendix::Stage II}. Since with high probability in Stage I we ensure $q^{(1,k)}B^{(k)}\le \delta C$, and the walk-in rate satisfies $\lambda^{(2,k)}\ge 5\iota+ 4\sqrt{3\iota{\delta C}}$, as long as $\lambda_{[v,1]}^{(2,k)} \gtrsim \iota+ \sqrt{\iota{\delta C}}$, the Stage II regret is bounded by $\Tilde{O}(1)$.

For the homogeneous case where $\lambda^{(2,k)}(s)=\lambda^{(2,k)}$ and $\gamma^{(k)}(s)=1$, the regret decays exponentially with $1-v$, the interval length between confirmation time $v$ and day end. To achieve $O(1)$ total Stage II regret, it suffices that
\begin{align}\label{equ::require Stage II call timing}
    (1-v)\lambda^{(2,k)} \gtrsim \iota + \sqrt{\delta C \cdot \iota}.
\end{align}
Hence we need only schedule confirmation calls $O(\sqrt{\delta C\log(CT)/\lambda^{(2,k)}})$ before the end of each day. See Appendix \ref{appendix::Stage II} for the exact bound.

By combining Theorem \ref{prop::StageI regret} and \ref{prop:HO_gap}, we achieve a constant regret guarantee of our \textbf{DASS} policy.
\begin{corollary}[Total regret]\label{coro::total}
    If the walk-in customer flow and the Type I customers' arrival flow satisfy $\lambda^{(2,k)}(s)=\lambda^{(2,k)}$ and $\gamma^{(k)}(s)=1$ for any $s\in[0,1]$ and $\iota\ge\log(CT)$, as long as the confirmation time $v$ satisfies \eqref{equ::require Stage II call timing} for each day $k\in[T]$, our regret achieves a constant expected total regret of
        % Under Assumption \ref{assump:busy_season}, we have:
    \begin{align*}
        \regret^T(\hat\pi_1,\hat\pi_2) =  O(1).
    \end{align*}
 % \begin{align*}
     % \E\left[\sum_{k=1}^{T}\left(\mathcal{R}_1^{(k)}(\hat{\pi}_1)+\mathcal{R}_2^{(k)}(\hat{\pi}_2)\right)\right]  &={O}(1).
    % \end{align*}
\end{corollary}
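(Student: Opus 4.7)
The plan is to assemble the bound in Corollary \ref{coro::total} by invoking the regret decomposition (\ref{equ::regret decomposition})--(\ref{equ::Stage II regret}) at the pair $(\hat\pi_1,\hat\pi_2)$, and then handling the two pieces with Theorem \ref{prop::StageI regret}, Lemma \ref{lemma::decoupling}, and Theorem \ref{prop:HO_gap} respectively. Concretely, write
\[
\regret^T(\hat\pi_1,\hat\pi_2)=\sum_{k=1}^T \E[\mathcal{R}_1^{(k)}(\hat\pi_1)] + \sum_{k=1}^T \E[\mathcal{R}_2^{(k)}(\hat\pi_1,\hat\pi_2)],
\]
and further split each Stage II summand by inserting the decoupled hindsight optimal $\hat\pi_2^\star$ as an intermediate policy,
\[
\mathcal{R}_2^{(k)}(\hat\pi_1,\hat\pi_2)=\bigl[\Lossii^{(k)}(\hat\pi_2,\cdot)-\Lossii^{(k)}(\hat\pi_2^\star,\cdot)\bigr]+\bigl[\Lossii^{(k)}(\hat\pi_2^\star,\cdot)-\Lossii^{(k)}(\pi_2^\star,\cdot)\bigr],
\]
where in both brackets the cumulative bookings are generated by $\hat\pi_1$.

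For the Stage I term, Theorem \ref{prop::StageI regret} gives a bound proportional to $CT\exp(-\iota)\cdot\max_k(r^{(k)}+\ell^{(k)})$ plus $C_0$. Since $\iota\ge\log(CT)$ we have $CT\exp(-\iota)\le 1$, so this piece is $O(1)$. The second bracket in the Stage II decomposition (the hindsight optimal gap) is handled by Lemma \ref{lemma::decoupling}: on the ``good'' event, which holds with probability at least $1-8T\exp(-\iota)$, the sum over $k$ is at most $C_0=O(1)$; on the complementary event, the loss is bounded crudely by $T\cdot C\cdot\max_k(\ell^{(k)}+r^{(k)})$, and this bad-event contribution is at most $8T\exp(-\iota)\cdot TC\cdot\max_k(\ell^{(k)}+r^{(k)})=O(1)$ under $\iota\ge\log(CT)$ provided rewards and penalties are $O(1)$.

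For the first Stage II bracket, the idea is to apply the sharper (proportional $\gamma^{(k)}\propto\lambda^{(2,k)}$) version of Theorem \ref{prop:HO_gap} on each day, on the high-probability event from Stage I guaranteeing $B^{(k)}q^{(1,k)}\le\delta C$ (this is exactly the concentration that defines $\hat B_t^{(k)}$ in (\ref{eq:hat_B_t}) and was already exploited to prove Theorem \ref{prop::StageI regret}). Under the constant-rate assumption $\lambda^{(2,k)}(s)\equiv\lambda^{(2,k)}$ and $\gamma^{(k)}\equiv 1$, the per-day bound reduces to
\[
v\lambda^{(2,k)}\Bigl((\ell_{\rm over}^{(k)}+r^{(k)})\exp\bigl(-A_k\bigr)+r^{(k)}\exp\bigl(-B_k\bigr)\Bigr),
\]
with $A_k,B_k$ the two exponents appearing after Theorem \ref{prop:HO_gap}. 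The hypothesis (\ref{equ::require Stage II call timing}) together with $\iota\ge\log(CT)$ forces $A_k,B_k\ge\iota\ge\log(CT)$, so each day contributes at most $v\lambda^{(2,k)}/(CT)$; summing over $k$ and noting $\lambda^{(2,k)}\le C$ (which is implicit in the busy-season regime) yields $O(1)$. Adding the three $O(1)$ contributions proves the corollary.

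The main obstacle I foresee is being careful about the probability events: Theorem \ref{prop:HO_gap} is stated conditionally on a realization of $B^{(k)}$, whereas Lemma \ref{lemma::decoupling} and Theorem \ref{prop::StageI regret} only guarantee the required control on $B^{(k)}$ (in particular $B^{(k)}q^{(1,k)}\le\delta C$) on an event of probability $1-O(T\exp(-\iota))$. The clean way to handle this is to condition on the intersection of all Stage I good events, apply Theorem \ref{prop:HO_gap} there, and absorb the complementary event into the Stage I error term already shown to be $O(1)$. A secondary bookkeeping point is ensuring $\lambda^{(2,k)}=\mathrm{poly}(CT)$ so the prefactor $v\lambda^{(2,k)}$ is outpaced by $\exp(-\iota)$; this follows from the standing assumption that per-day arrival rates scale polynomially in the primitives.
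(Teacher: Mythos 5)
Your decomposition --- Stage I via Theorem \ref{prop::StageI regret}, then inserting the decoupled benchmark $\hat\pi_2^\star$ and controlling the two resulting Stage II brackets via Lemma \ref{lemma::decoupling} and Theorem \ref{prop:HO_gap} --- is exactly the route the paper takes (its proof is essentially the one line ``combine Theorems \ref{prop::StageI regret} and \ref{prop:HO_gap}'' together with the discussion preceding the corollary), and your attention to the conditioning issue (Theorem \ref{prop:HO_gap} is stated given $B^{(k)}$, and the sharper exponent needs $q^{(1,k)}B^{(k)}\le\delta C$, which only holds on the Stage I good event) is the right thing to worry about.

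There is, however, one quantitative slip in your handling of the bad event for Lemma \ref{lemma::decoupling}. You bound the complementary-event contribution by
\[
8T\exp(-\iota)\cdot TC\cdot\max_k\bigl(\ell^{(k)}+r^{(k)}\bigr)=8T^2C\exp(-\iota)\cdot\max_k\bigl(\ell^{(k)}+r^{(k)}\bigr)
\]
and claim this is $O(1)$ under $\iota\ge\log(CT)$. It is not: $\exp(-\iota)\le(CT)^{-1}$ kills only one factor of $CT$, leaving a bound of order $T$. The same issue recurs when you ``absorb the complementary event into the Stage I error term'' for the first Stage II bracket. The repair is the one the paper implicitly uses in the proof of Theorem \ref{prop::StageI regret}: do the accounting day by day rather than on the global bad event. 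For each fixed $k$, the day-$k$ good event fails with probability $O(\exp(-\iota))$ (no union bound over $T$ is needed at this stage), and on that failure the day-$k$ loss is at most $C\max_k(\ell^{(k)}+r^{(k)})$; summing over $k$ then gives $O(CT\exp(-\iota))=O(1)$. Alternatively, strengthening the hypothesis to $\iota\ge 2\log(CT)$ --- still $O(\log(CT))$, consistent with Assumption \ref{assump:busy_season} --- rescues your global-event bound as written. With that fix, and the caveat you already flag about needing $\lambda^{(2,k)}$ polynomial in $CT$ so that the prefactor $v\lambda^{(2,k)}T\exp(-\iota)$ is $O(1)$ (a looseness present in the paper itself, which derives the Stage II conclusion for $\iota\ge\log(\lambda^{(2,k)}T)$ while the corollary assumes $\iota\ge\log(CT)$), your argument is complete and matches the paper's.
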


\section{Extension to Heterogeneous Customers}\label{sec:nested}

We extend the model to $m$ customer classes with nested capacity protection. Classes are indexed by $j\in[m]$ with revenues in \textbf{increasing order} $r_1^{(k)}<r_2^{(k)}<\cdots<r_m^{(k)}$ and overbooking losses $\ell_1^{(k)}<\ell_2^{(k)}<\cdots<\ell_m^{(k)}$ on Day $k$. Nested booking limits satisfy $C^{(k)}_1\le C^{(k)}_2\le\cdots\le C^{(k)}_m=C$.

Class 1 (lowest revenue) is restricted to $C_1^{(k)}$ rooms. Class $j$ can access any room up to limit $C_j^{(k)}$, sharing inventory with lower classes. Class $m$ (highest revenue) has access to the entire capacity $C$.

\subsection{Key Challenges}
Extension to nested multi-class control introduces three structural challenges beyond simply adding indices.

First, a decision for one class has a \emph{domino effect} across protection levels. For a given day $k$ and level $j$, the cumulative accepted reservations from the top-$j$ classes are
\[
B_t^{(\le j,k)}:=\sum_{i=1}^j B_t^{(i,k)},
\]
and the safety stock threshold we use is
\begin{align}
    \hat{B}_t^{(\le j,k)}
    =\mu_t^{(\le j,k)}+\frac{\iota\bigl(1-\bar p_t^{(\le j,k)}\bigr)}{3}
      +\sqrt{\left(\frac{\iota\bigl(1-\bar p_t^{(\le j,k)}\bigr)}{3}\right)^2
             +2\iota\,\sigma_t^{2(\le j,k)}}, \tag{16}
\end{align}
where
\[
\mu_t^{(\le j,k)}=\sum_{i=1}^j p^{(i,k)}(t)B_t^{(i,k)},\quad
\sigma_t^{2(\le j,k)}=\sum_{i=1}^j B_t^{(i,k)}p^{(i,k)}(t)\bigl(1-p^{(i,k)}(t)\bigr),\quad
\bar p_t^{(\le j,k)}=\frac{\sum_{i=1}^j B_t^{(i,k)}p^{(i,k)}(t)}{\sum_{i=1}^j B_t^{(i,k)}}.
\]
Accepting a class-$j$ request increases $B_t^{(\le\ell,k)}$ for all $\ell\ge j$, and the admissibility condition
\[
\hat{B}_t^{(\le \ell,k)}<\hat C_\ell^{(k)}\quad \text{for all }\ell\in\{j,\dots,m\}
\]
must hold simultaneously. Thus, a single acceptance decision for class $j$ is constrained by $m-j+1$ simultaneous safety conditions. A violation in \emph{any} higher-level aggregate bucket ($\ell \ge j$) necessitates a rejection, effectively coupling the decision space of all classes.

Second, the safety stock is driven by the \emph{aggregate variance} of heterogeneous Bernoulli streams, not by a sum of per-class safety stocks. In the single-class case, the buffer width scales like
\[
\text{Safety} \;\approx\; \sqrt{\text{Var}\!\bigl(B^{(k)}\bigr)}.
\]
In the nested setting we have
\[
\text{Var}\!\bigl(B^{(\le j,k)}\bigr)=\sum_{i=1}^j B_t^{(i,k)}p^{(i,k)}(t)\bigl(1-p^{(i,k)}(t)\bigr),
\]
so the correct buffer scales as
\[
\text{Safety}_j \;\approx\; \sqrt{\sum_{i=1}^j \text{Var}\!\bigl(B^{(i,k)}\bigr)}\neq \sum_{i=1}^j \sqrt{\text{Var}\!\bigl(B^{(i,k)}\bigr)}.
\]
Running $m$ independent single-class buffers would therefore be overly conservative. The nested estimator \eqref{eq:nested_B} exploits this pooling effect while still guaranteeing all $m$ protections with one coupled construction.

Third, the \emph{busy season} requirement must hold at each bottleneck level of the hierarchy. In the single-class model, it suffices to assume
\[
\lambda^{(2,k)} \gtrsim \iota+\sqrt{C\delta\iota}
\]
to ensure walk-ins can fill the Stage I safety buffer. In the nested model, the overall system may be busy while high-value segments are not. Assumption \ref{assump:nested_busy} requires, for every $j$,
\[
\lambda^{(2,\le j,k)} \ge \frac{\delta C^{(k)}_j}{q_{\min}}+c_1\iota+c_2\sqrt{C^{(k)}_j\iota},
\]
ensuring the safety buffer for each level can be filled with high probability. The regret analysis is driven by the weakest (most capacity-constrained) level.

These three features — domino effects across nested thresholds, non-additive variance structure, and multi-level busy-season requirements — distinguish the multi-class setting from the single-class case and require new analytical techniques.

\subsection{Model and Arrivals}
Stage I reservations of class $j$ arrive as a non-homogeneous Poisson process with rate $\lambda^{(1,j,k)}(t)$, $t\in[k-k_0,k]$, survive Stage I with probability $p^{(j,k)}(t)$, and show up in Stage II with probability $q^{(1,j,k)}$. Class-$j$ walk-ins arrive at rate $\lambda^{(2,j,k)}(u)$, $u\in[0,1]$. Durations follow the same geometric ($q$) or constant ($d$) assumptions and are known on arrival. Aggregate quantities over top $j$ classes use the prefix $(\le j)$, e.g., $\lambda^{(2,\le j,k)}(u)=\sum_{i=1}^j\lambda^{(2,i,k)}(u)$.

\subsection{Nested DASS-I}
Let $B_t^{(j,k)}$ be the accepted, not-yet-cancelled class-$j$ bookings at time $t$. We define the aggregate bookings for the first $j$ classes as $B_t^{(\le j,k)}=\sum_{i=1}^j B_t^{(i,k)}$.

We calculate the aggregate expected confirmation and variance as:
\[
\mu_t^{(\le j,k)}=\sum_{i=1}^j p^{(i,k)}(t)B_t^{(i,k)},\quad \sigma_t^{2(\le j,k)}=\sum_{i=1}^j B_t^{(i,k)}p^{(i,k)}(t)\bigl(1-p^{(i,k)}(t)\bigr).
\]
Let the weighted mean probability be $\bar p_t^{(\le j,k)}=\mu_t^{(\le j,k)}/B_t^{(\le j,k)}$. The cumulative safety threshold is defined as:
\begin{align}
    \hat{B}_t^{(\le j,k)}=\mu_t^{(\le j,k)}+\frac{\iota(1-\bar p_t^{(\le j,k)})}{3}+\sqrt{\left(\frac{\iota(1-\bar p_t^{(\le j,k)})}{3}\right)^2+2\iota\sigma_t^{2(\le j,k)}}. \label{eq:nested_B}
\end{align}

Underestimated capacities $\hat C_j^{(k)}$ are computed analogously to \eqref{eq:hat_C} with capacity $C$ replaced by $C^{(k)}_j$. A class-$j$ reservation is accepted \textbf{if and only if} the aggregate condition holds for all levels $\ell \ge j$:
\[
\hat{B}_t^{(\le \ell,k)}<\hat C_\ell^{(k)}\quad \forall \ell\in\{j,\dots,m\}.
\]
This ensures that accepting a low-value customer does not violate the protection levels set for higher-value aggregates, yielding \textbf{N-DASS-I}.

\subsection{Nested DASS-II}
Fix Day $k$. Let $B^{(j,k)}$ be non-cancelled bookings of class $j$ at Stage II start. For $u<v$, define cumulative check-ins $B^{(\le j,k)}_{1,u}$, accepted walk-ins $W^{(\le j,k)}_{1,u}$, and the weighted forecast
\begin{align}
    \hat N_u^{(\le j,k)}=B^{(\le j,k)}_{1,u}+ \sum_{i=1}^j q^{(1,i,k)}\!\left(B^{(i,k)}-B^{(i,k)}_{1,u}\right) + W^{(\le j,k)}_{1,u}+ \alpha_j \int_u^1 \lambda^{(2,\le j,k)}(s)ds. \label{eq:nested_stageII}
\end{align}
For $u\ge v$, the confirmation call reveals remaining reservations, so $\hat N_u^{(\le j,k)}=B^{(\le j,k)}_{1,v}+B^{(\le j,k)}_{3,v}+W^{(\le j,k)}_{1,u}$. A class-$j$ walk-in at time $u$ is accepted if $\hat N_u^{(\le \ell,k)}<\tilde C_\ell^{(k)}$ for all $\ell\in\{j,\dots,m\}$, where $\tilde C_\ell^{(k)}$ generalizes \eqref{equ::allocated capacity} with $C$ replaced by $C^{(k)}_\ell$ (and $C/d$ for constant durations). This defines \textbf{N-DASS-II}.

\subsection{Busy Season and Regret}
We generalize Assumption \ref{assump:busy_season} as follows.

\begin{assumption}[Multi-class busy season]\label{assump:nested_busy}
There exist absolute constants $c_1,c_2>0$ such that for every day $k$ and nesting level $j\in[m]$,
\begin{align}
    \lambda^{(2,\le j,k)} \;\ge\; \frac{\delta C^{(k)}_j}{q_{\min}}+c_1\iota+c_2\sqrt{C^{(k)}_j\iota}, \label{eq:nested_busy}
\end{align}
where $q_{\min}=\min_{i\le j} q^{(1,i,k)}$, $\delta=1-q$ (geometric) or $1/d$ (constant), and $\iota\ge \log(CTm)$. The reservation rates $\lambda^{(1,j,k)}$ satisfy \eqref{eq:require_booking_appendix} with $C$ replaced by $C^{(k)}_j$.
\end{assumption}

\begin{theorem}[Nested regret]\label{thm:nested_regret}
Under Assumption \ref{assump:nested_busy} and the duration assumptions, (\textbf{N-DASS-I}, \textbf{N-DASS-II}) achieves
\[
    \E[\regret^T(\hat\pi^{\mathrm{N}}_1,\hat\pi^{\mathrm{N}}_2)] \le O(mCT\,e^{-\iota})+C_0,
\]
where $C_0=0$ for geometric durations and $C_0=\sum_{k=1}^{d}\frac{C(d-k)}{d}\cdot \max_j r_j^{(k)}$ for constant durations. Choosing $\iota=\Theta(\log(CTm))$ yields $O(1)$ expected regret. Proof details are in Appendix \ref{appendix:nested}.
\end{theorem}

\subsubsection{Proof Sketch}\label{sec:nested_proof_strategy}

The three challenges are addressed as follows:

\begin{enumerate}
    \item \textbf{Domino Effect $\rightarrow$ Union Bounds:} The hierarchical booking limits $\{C_j^{(k)}\}$ impose $(m-j+1)$ simultaneous constraints for each class $j$ request. We apply union bounds over all levels in Stage I, decoupling the $m$ constraints into $m$ independent concentration inequalities, each holding with probability $1-O(e^{-\iota})$.

    \item \textbf{Aggregate Variance $\rightarrow$ Pooled Estimators:} Since $\sqrt{\sum_{i=1}^j \mathrm{Var}(X_i)} \neq \sum_{i=1}^j \sqrt{\mathrm{Var}(X_i)}$, we cannot simply sum safety stocks. We construct an aggregate estimator $\hat{B}_t^{(\le j,k)}$ that pools demand across classes $1,\dots,j$ and applies Bernstein's inequality to the total variance. The safety threshold in \eqref{eq:nested_B} reflects this non-linear aggregation.

    \item \textbf{Hierarchical Busy Season $\rightarrow$ Bottleneck Analysis:} Each nesting level $j$ has its own busy-season condition \eqref{eq:nested_busy}. Our proof in Appendix \ref{appendix:nested} analyzes idling and overbooking separately at each level, then applies union bounds to show all levels remain feasible with high probability. For practical verification, if the class mix is uniformly bounded away from zero ($\lambda^{(2,\le j,k)}\ge \theta_j\lambda^{(2,k)}$ for all $j$ with constants $\theta_j>0$), it suffices to check \eqref{eq:nested_busy} only for the highest class $j=m$.
\end{enumerate}

\begin{remark}
The proof structure mirrors the single-class case in three parts: (i) Stage I regret via union bounds over $m$ nesting levels; (ii) decoupling into single-day offline policies using the capacity caps $\tilde C^{(k)}_j$; (iii) Stage II concentration for the weighted forecasts \eqref{eq:nested_stageII}. Sensitivity to confirmation timing follows Theorem \ref{prop:HO_gap} with class-dependent weights $\{\alpha_j\}$ (see Appendix \ref{appendix:nested}).
\end{remark}

\section{Numerical Experiments}\label{sec:numerical_experiments}
\subsection{Interplay of Booking Level, Walk-ins and Confirmation Call}
We investigate how booking decisions, walk-in rates, and confirmation timing interact through numerical experiments. To isolate the effects of Stage I decisions, we use fixed duration $d=1$ so decisions are independent across days. We focus on regret for a single day $k$ with idle capacity $C=200$ and cancellation probability $q^{(1,k)}=0.5$.

\subsubsection{Sensitivity to booking level and walk-in rate}
We vary non-cancelled reservations $B^{(k)}\in[300,500]$ and walk-in rate $\lambda^{(2,k)}\in[0,50]$ to understand their joint effect on regret. For each pair $(B^{(k)}, \lambda^{(2,k)})$, we run 1000 Stage II simulations with confirmation timing $v=0$ (offline optimal). Figure \ref{fig::walk in with Bk} shows regret is minimized when $\lambda^{(2,k)}\approx C/q^{(1,k)}-B^{(k)}\approx 2\sqrt{C}$, meaning expected non-cancelled Type I customers slightly underutilize capacity and walk-ins fill the gap. This validates our safety stock construction and Assumption \ref{assump:busy_season}.

\subsubsection{Sensitivity to confirmation call timing}
We simulate Stage II with the optimal $B^{(k)}$ from Figure \ref{fig::walk in with Bk} under different confirmation timings. Arrival times for both customer types follow ${\rm Beta}(6,6)$. We fix $B^{(k)}=360$ and run 5 independent experiments. Figure \ref{fig:regret v} shows regret decreases as confirmation calls occur earlier, with a sharp phase transition around $v=0.6$. This validates Theorem \ref{prop:HO_gap}: regret decays exponentially with the post-confirmation interval $(1-v)$. The sharpness of this transition arises from the exponential decay factor $\exp(-\Omega(\lambda^{(2,k)}_{[v,1]}))$ in Theorem \ref{prop:HO_gap}—once the post-confirmation walk-in flow $\lambda^{(2,k)}_{[v,1]}$ exceeds the threshold $\Theta(\iota + \sqrt{\iota \delta C})$, the estimation error collapses rapidly due to concentration inequalities. The observed $v=0.6$ threshold corresponds to $(1-v)\lambda^{(2,k)} \approx 0.4 \times 50 = 20 \gtrsim \iota + \sqrt{\delta C \cdot \iota}$ (with $\iota=2$, $\delta=0.5$, $C=200$), matching our theoretical requirement \eqref{equ::require Stage II call timing}. Practically, this implies that confirmation calls need not occur at the start of the day—even mid-day confirmation ($v \approx 0.5$) suffices for near-optimal performance, reducing operational burden.

\begin{figure}
    \centering
    \includegraphics[width=0.9\linewidth]{figs/heatmap_diff_walk_in.pdf}
    \caption{Regret under different number of accepted reservations $B^{(k)}$ and walk in rates $\lambda^{(2,k)}$ in a single day. We run the simulation with the room capacity $C=200$ and the cancel probability $q^{(1,k)}=0.5$. To emphasize the impact of Stage I decision, i.e., the control of $B^{(k)}$, we set the confirmation call timing to be $v=0$, i.e., conducting the offline optimal policy in Stage II.  We compute the expected Stage II regret with $B^{(k)}\in[300,500]$ and $\lambda^{(1,k)}\in[0,50]$ through $1000$ simulations on Stage II for each pair of $(B^{(k)},\lambda^{(2,k)})$.}
    \label{fig::walk in with Bk}
\end{figure}

\begin{figure}
    \centering
    \includegraphics[width=0.7\linewidth]{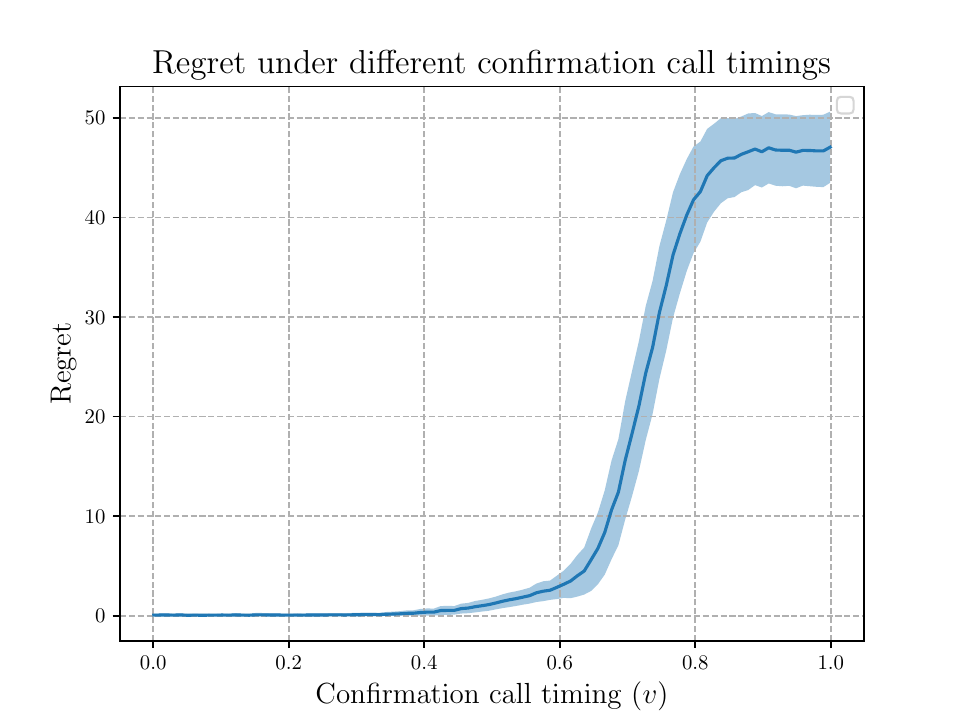}
    \caption{Regret under different confirmation call timings. We set the distribution of the arrival time for each Type I or Type II customer to follow a beta distribution of ${\rm Beta}(6,6)$.  We fix the number of non-cancelled reservations $B^{(k)}=360$ and simulate $5$ independent decision processes to compute the expected regret and the standard error.}
    \label{fig:regret v}
\end{figure}

Having validated key sensitivities on single-day scenarios, we now evaluate DASS over longer horizons against heuristic policies.

\subsection{Synthetic Experiments}\label{sec:numerical_synthetic}
% In this section, we conduct numerical experiments to demonstrate the performance of the proposed \textbf{DASS} policy (Algorithm \ref{alg:DASS_stage_I} and \ref{alg:DASS_stage_II}) when the occupancy duraiton is $\text{geometric}(q).$ We compare it with the heuristic policies which set the dynamic threshold as $\hat B_t^{(k)}=(1+\beta)(1-q) C/q^{(1,k)}$ and apply the offline optimal check-in control in Stage II, where $\beta\in [-1,1]$ is a tunable overbooking tolerance and $q^{(1,k)}$ is the non-cancellation rates of Type I customer in Stage II.

We evaluate \textbf{DASS} (Algorithms \ref{alg:DASS_stage_I} and \ref{alg:DASS_stage_II}) over $T=1000$ days with geometric occupancy durations ($q=0.3$), capacity $C=100$, non-cancellation rate $q^{(1,k)}=0.4$, reservation rate $\lambda^{(1,k)}=300$, and walk-in rate $\lambda^{(2,k)}=30$. Stage II arrival times follow ${\rm Beta}(6,6)$. We set hyperparameters $\iota=2$ and $\alpha=0.4$.

We benchmark against static threshold heuristics:
\[
\hat{B}_t^{(k)} = \frac{(1+\beta)(1-q) C}{q^{(1,k)}}~~~\text{(Stage I)},\quad N_u^{(k)}=q^{(1,k)}B^{(k)}~~~\text{(Stage II)},
\]
where $\beta \in \{-0.2,-0.1,0,0.1,0.2\}$ controls overbooking tolerance. We test confirmation timings $v\in\{0,0.5,0.7,1.0\}$ with 5 independent runs per configuration.

Figure \ref{fig::syn regret} shows \textbf{DASS} outperforms all heuristics across all confirmation timings. With $v=0$, \textbf{DASS} achieves near-zero regret, validating Theorem \ref{prop::StageI regret}. As $v$ increases, all policies degrade, but \textbf{DASS} maintains superior performance due to adaptive safety stocks, with the performance gap widening at later confirmation times ($v=0.7, 1.0$).
\begin{figure}
    \centering
    \subfigure[$v=0$]{\includegraphics[width=0.48\linewidth]{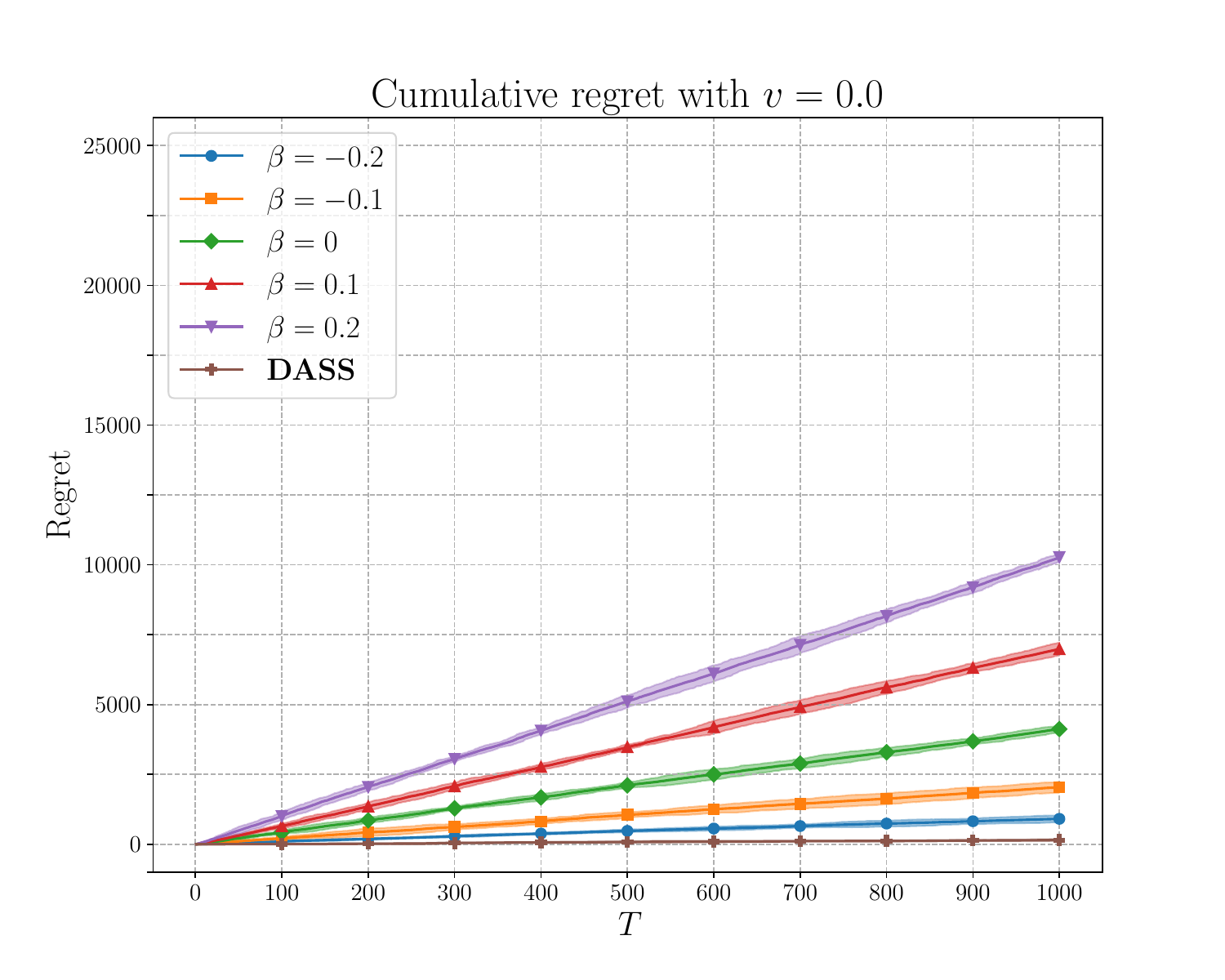}}
    \hfill
    \subfigure[$v=0.5$]{\includegraphics[width=0.48\linewidth]{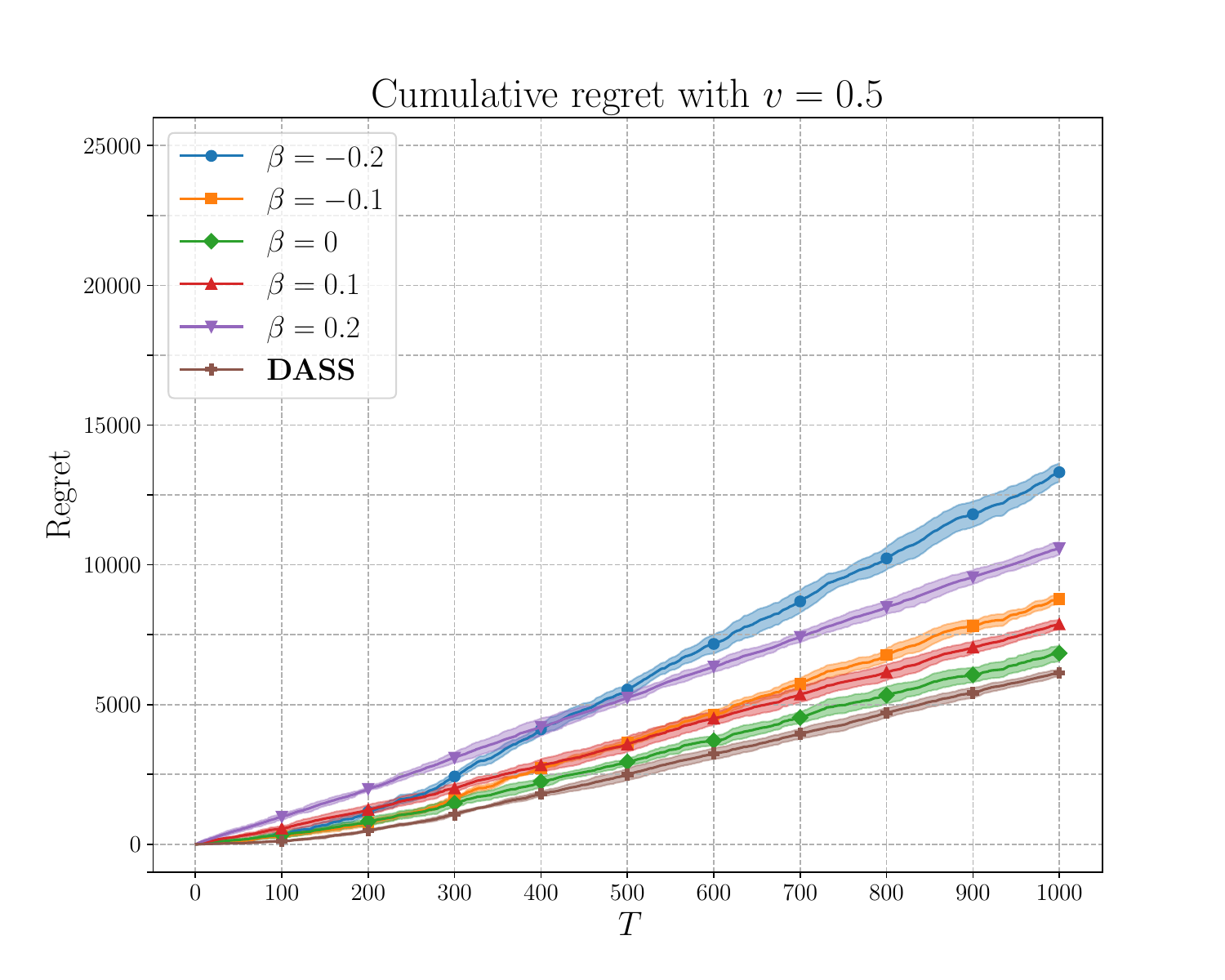}}
    \\[4ex]
    \subfigure[$v=0.7$]{\includegraphics[width=0.48\linewidth]{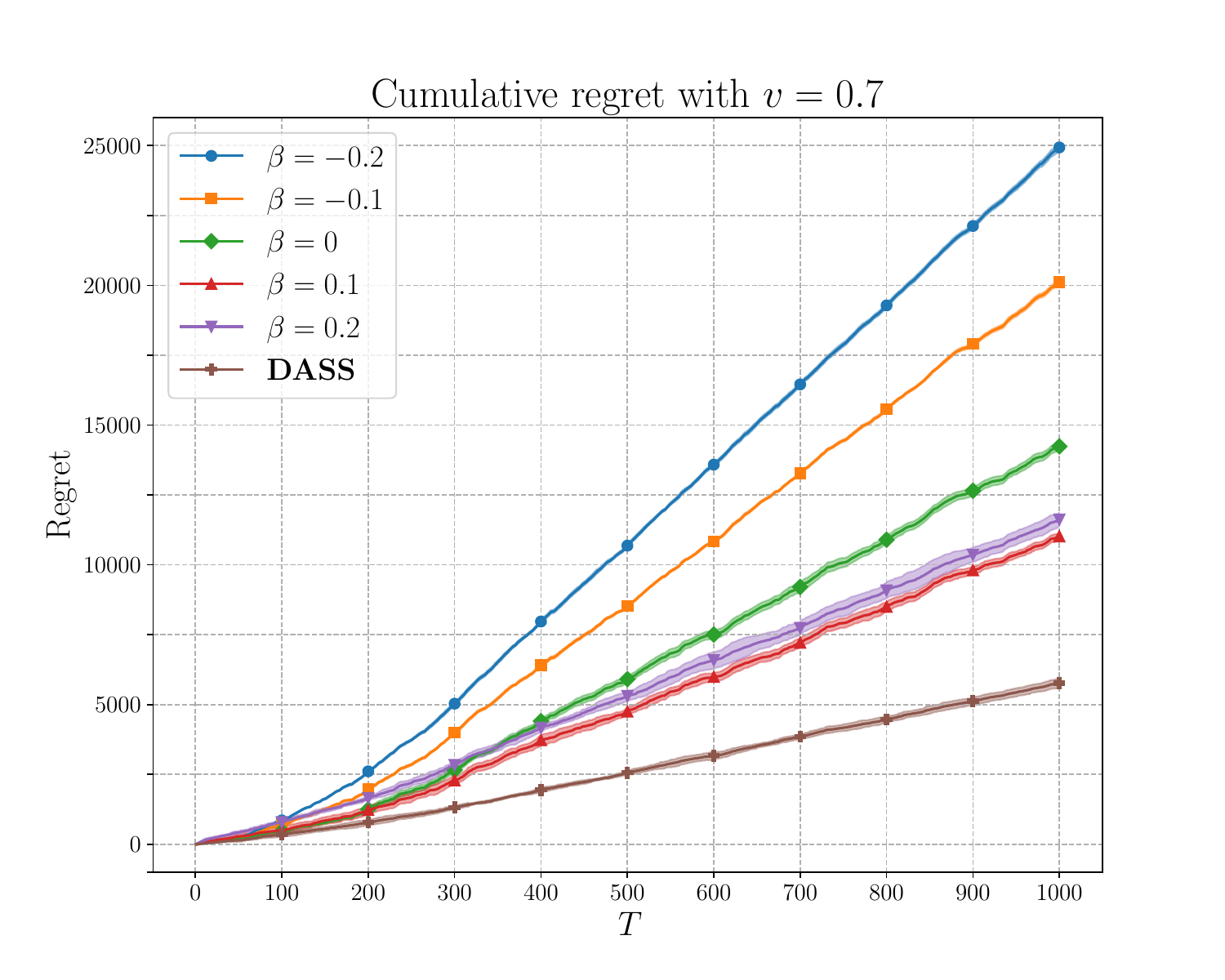}}
    \hfill
    \subfigure[$v=1.0$]{\includegraphics[width=0.48\linewidth]{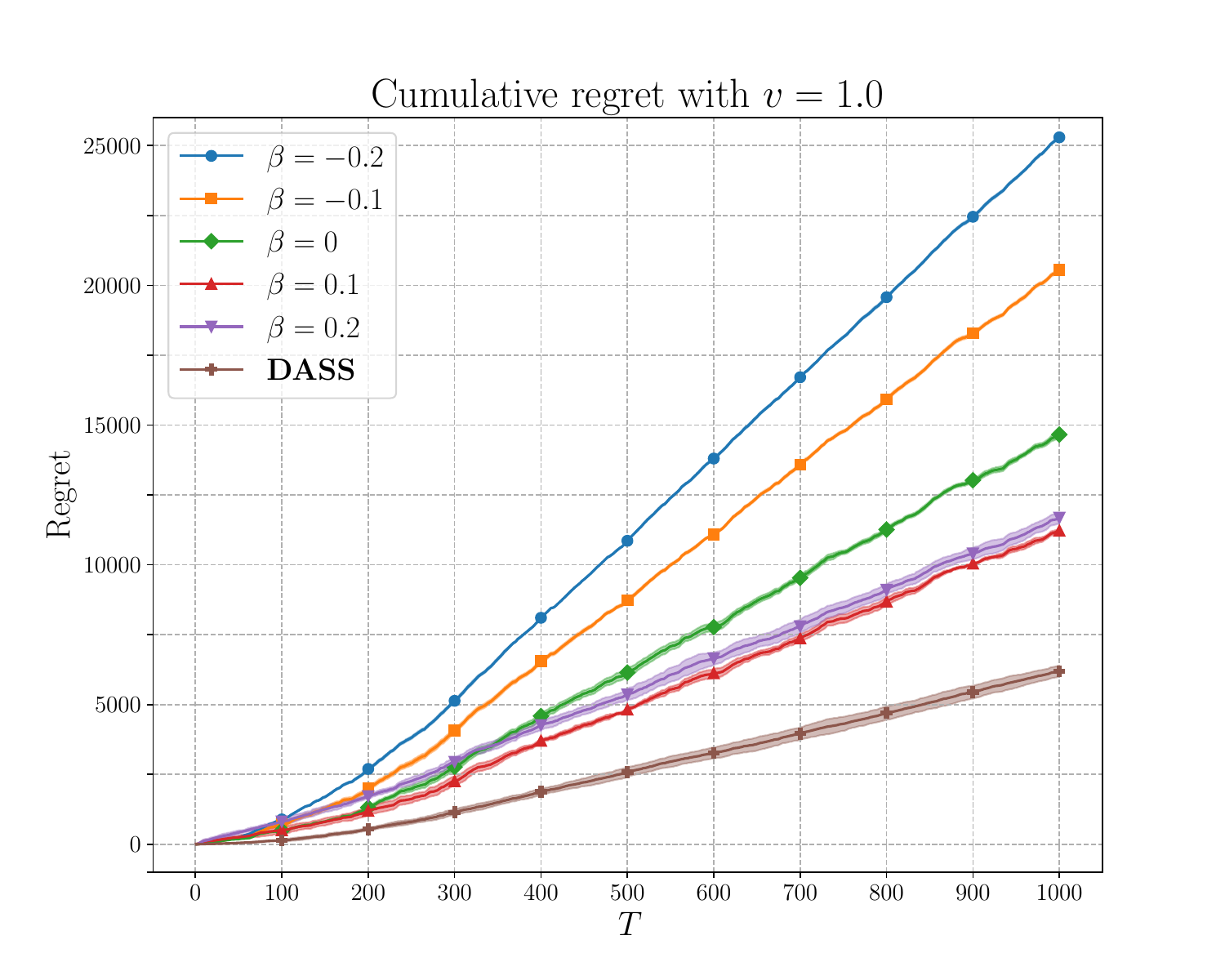}}
    \caption{Cumulative regret with different confirmation call timings. We generate the customer flow with $C=100$, $q=0.3$, $q^{(1,k)}=0.4$, $\lambda^{(1,k)}=300$ and $\lambda^{(2,k)}=30$, and set the distribution of the arrival time of customers in Stage II to follow ${\rm Beta}(6,6)$. For the hyperparameters of our algorithm, we choose $\iota=2$ and $\alpha=0.4$. We simulate $5$ independent customer flows within $T=1000$ days to compute the expected regret and the standard error of each algorithm.}\label{fig::syn regret}
\end{figure}

\subsection{Application to Real Data from an Algarve Resort Hotel}
Finally, we validate DASS on real hotel data to confirm its practical applicability. We apply \textbf{DASS} to a dataset from a resort hotel in Algarve, Portugal.

\subsubsection{Data Description}
The Algarve resort hotel dataset \citep{antonio2019hotel} records daily customer data from July 1, 2015, to July 31, 2017, for a 70-room hotel. The data includes accepted reservations (lead time, cancellation date, check-in date, occupancy duration) and walk-ins with durations.

Figures \ref{fig:reservation_walk-ins} and \ref{fig:occupancy} show data from October 1, 2015, to July 31, 2016. Arrivals exhibit strong seasonality, with walk-ins typically one-third to one-half of reservations. Overbooking occurs consistently, validating the busy season assumption.
\begin{figure}[ht]
    \centering
    \includegraphics[width=1\linewidth]{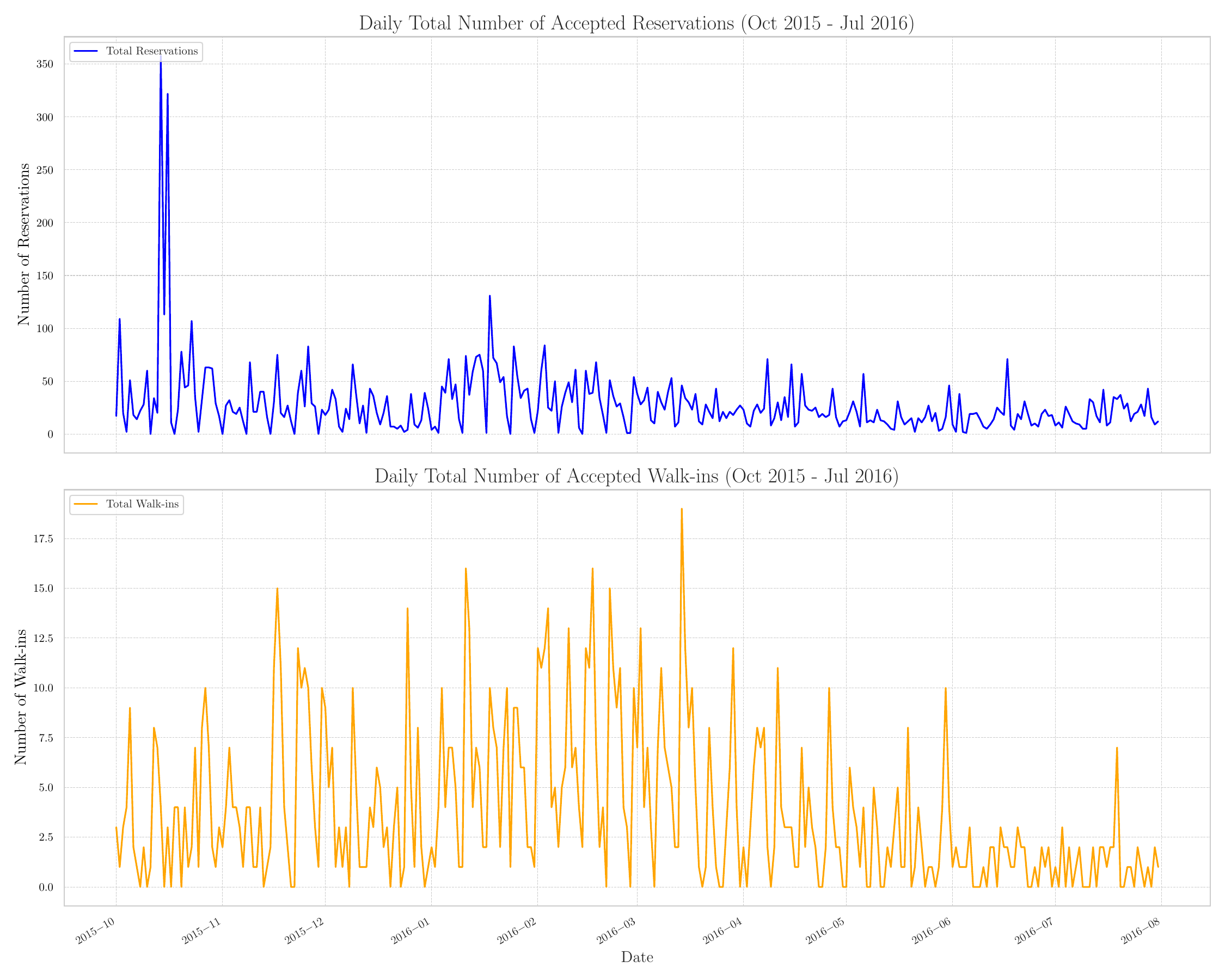}
    \caption{Illustration of the Algarve resort hotel data with daily arrivals.}
    \label{fig:reservation_walk-ins}
\end{figure}
\begin{figure}[ht]
    \centering
    \includegraphics[width=1\linewidth]{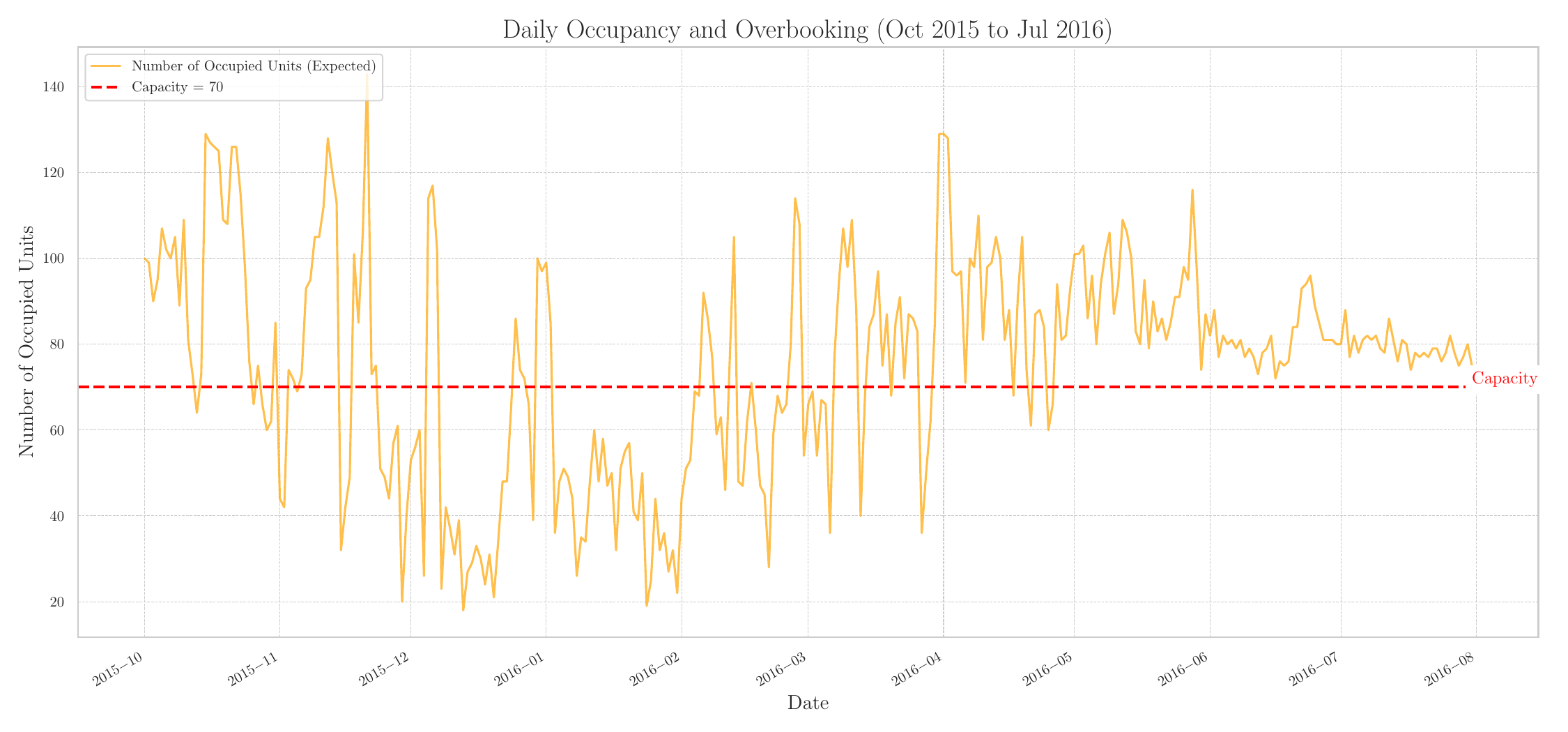}
    \caption{Illustration of the Algarve resort hotel data with daily confirmed rooms to be occupied vs total capacity.}
    \label{fig:occupancy}
\end{figure}

\subsubsection{Data Augmentation}
The dataset lacks rejected reservation requests, rejected walk-ins, and intra-day check-in flows—information needed to simulate our model. We fit parametric distributions to observed data and generate a synthetic three-year dataset with complete information.

Figure \ref{fig:fit} shows fitted distributions and observed histograms (blue). We use Gamma for lead time, Weibull for cancellation interval (time from reservation to cancellation), Geometric for occupancy duration, and Poisson Mixture for walk-ins. The fitted distributions (red) match the data well.

\begin{figure}[ht]
    \centering
    \includegraphics[width=1\linewidth]{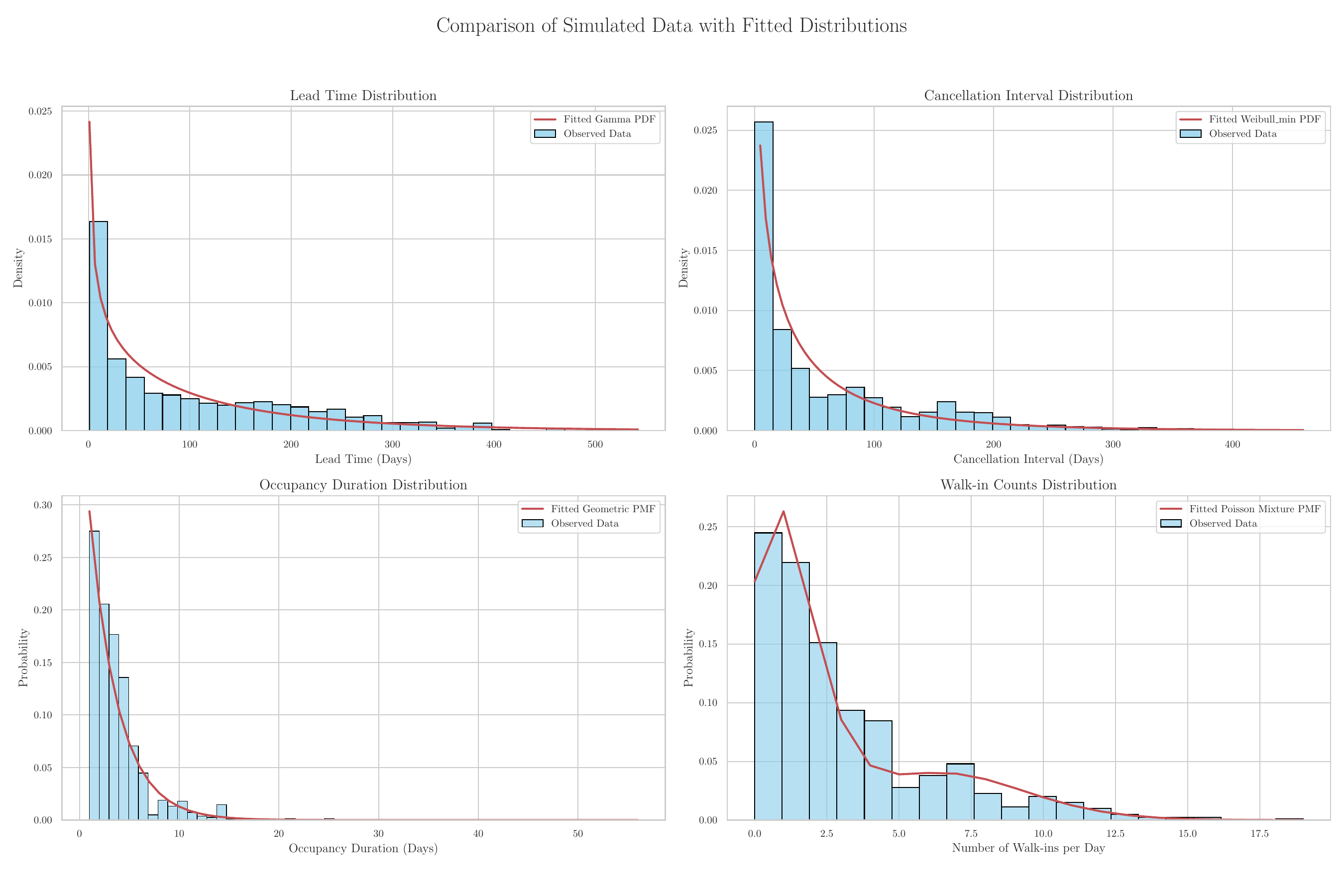}
    \caption{Fitted distributions for lead time, cancellation time, occupancy duration, and walk-in rates. Blue: observed histograms. Red: fitted parametric distributions.}
    \label{fig:fit}
\end{figure}

\subsubsection{Results}
We run \textbf{DASS} and baseline heuristics from Section \ref{sec:numerical_synthetic} on the synthetic dataset, choosing hyperparameters $(\alpha,\iota)$ to minimize regret. We simulate 5 independent runs over $T=1000$ days.

Figure \ref{fig::real} shows \textbf{DASS} outperforms heuristics across all confirmation timings. At $v=0.7$, \textbf{DASS} achieves 15\% higher revenue than the best heuristic.

\begin{figure}
    \centering
    \subfigure[Cumulative revenue with  $v=0.7$]{\includegraphics[width=0.48\linewidth]{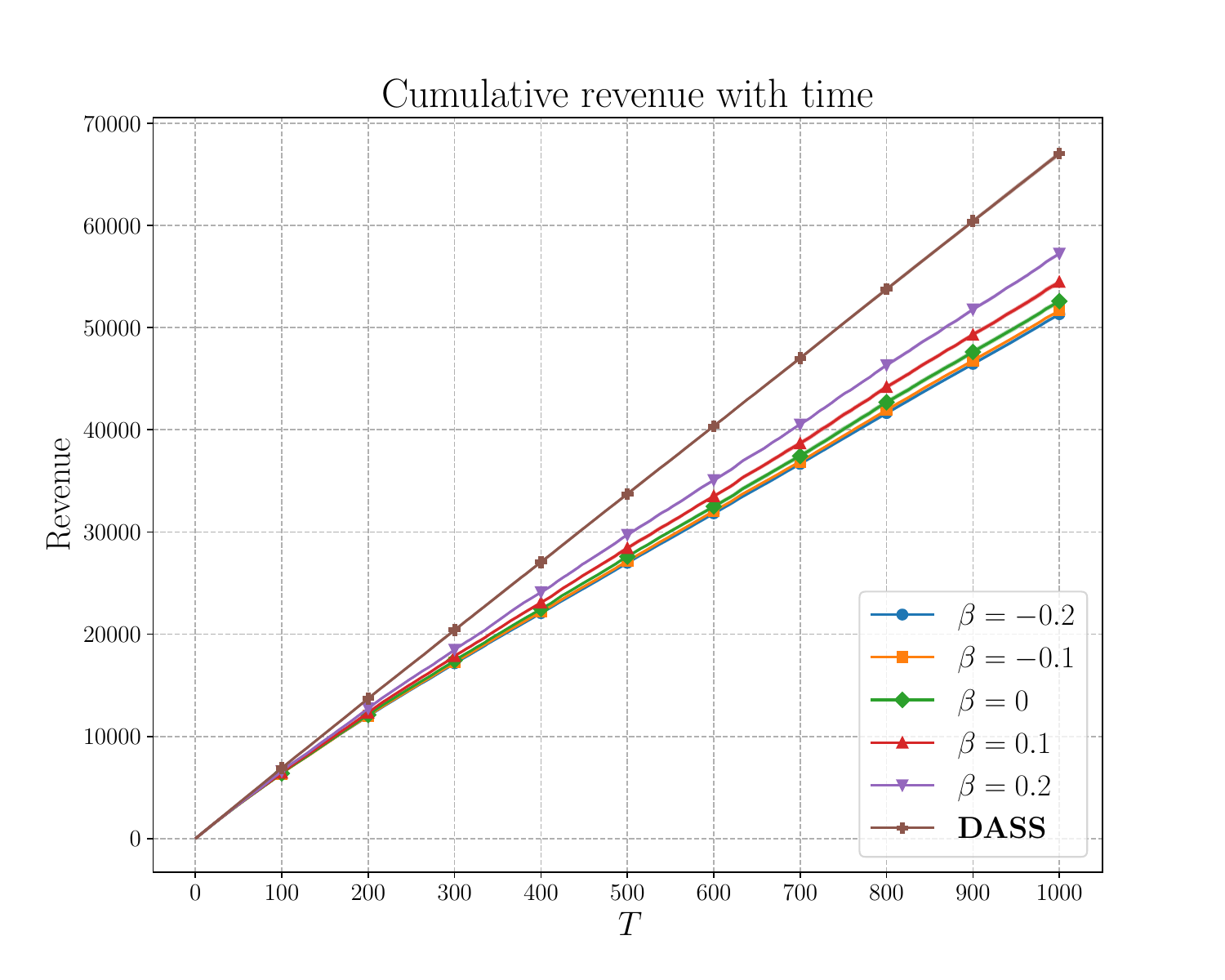}}
    \hfill
    \subfigure[Total revenue with different $v$.]{\includegraphics[width=0.48\linewidth]{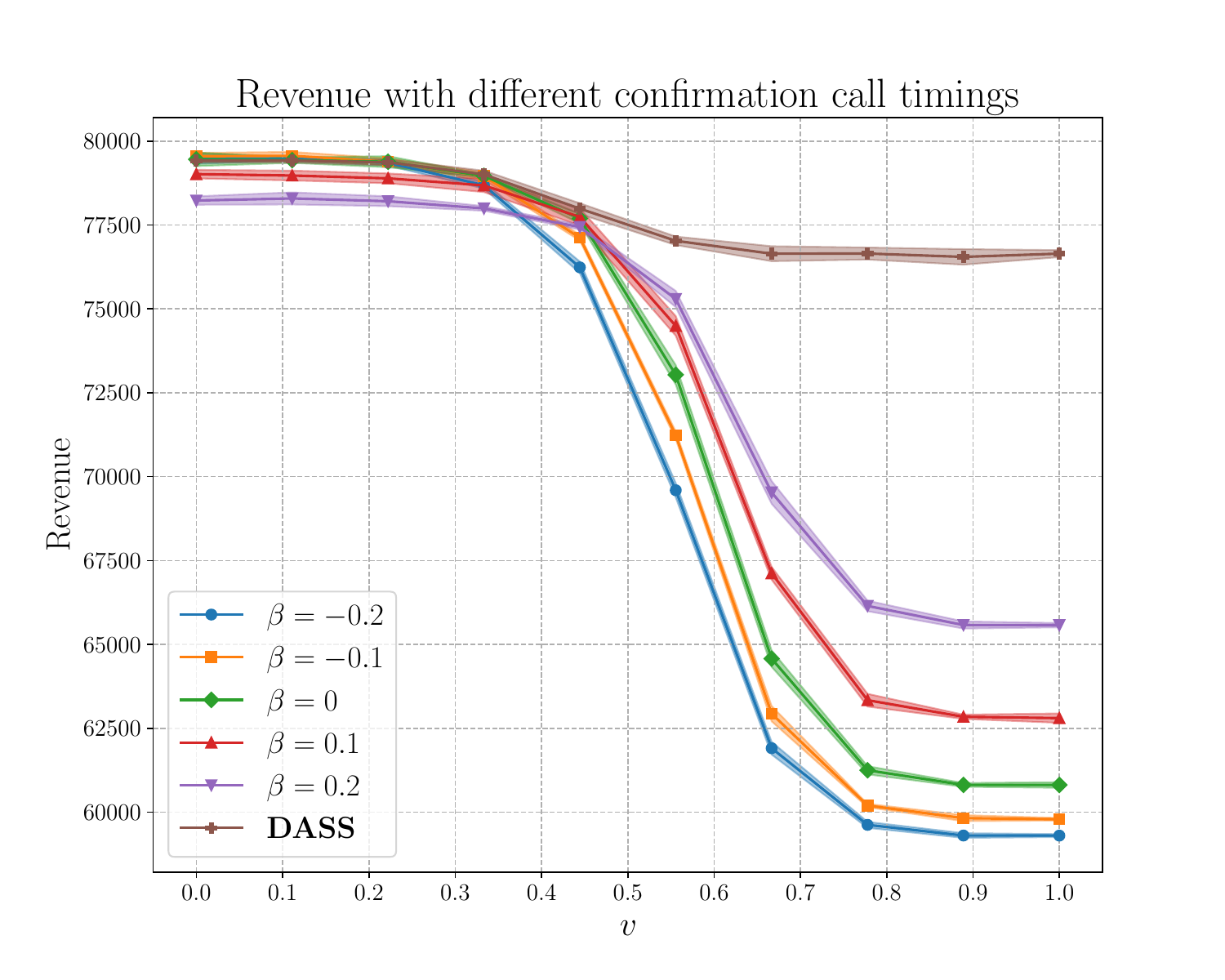}}
    \caption{Cumulative regret with different confirmation call timings. We generate the customer flow with the parameters estimated from the real data described above, and choose the parameters $(\alpha,\iota)$ in our algorithm that minimize the regret. We simulate $5$ independent customer flows within $T=1000$ days to compute the expected regret and the standard error of each algorithm.}\label{fig::real}
\end{figure}
% Since the dataset does not include the rejected reservations requests, rejected walk-ins and the daily check-in flows, we first fit this dataset and generate similar distributed 3-year hotel dataset that includes all information we need. The esimation results for the parameters are shown in Figure \ref{fig:fit}. The red lines represent the fitted distributions, for which we use Gamma distribution for lead time, Weibull distrition for cancellation interval (the interval between the cancellation and reservation), Geometric distrbution for the occupancy duration and Poisson Mixture for walk-ins. One can see from Figure \ref{fig:fit}, the fitted distribution aligns well with the data's true histogram marked in blue. 

\section{Conclusion}
We study online reusable resource allocation with advance reservations, overbooking, and confirmation calls. Our two-stage model incorporates a busy season assumption that mitigates uncertainties from cancellations and overlapping occupancy durations. Without this assumption, all online policies incur $\Omega(T)$ regret—this explains why policies achieving constant regret for non-reusable resources fail in the reusable setting.

Our Decoupled Adaptive Safety Stock (\textbf{DASS}) policy uses adaptive safety stocks to decouple inter-day correlations, mitigating overbooking risk while improving utilization. Under DASS, optimal Stage II control can be approximated by independent single-day policies, demonstrating the power of decoupling. Regret decays exponentially with the interval between confirmation and service end, achieving near-optimal constant regret even with late confirmations. Numerical experiments on synthetic and real hotel data validate DASS performance, achieving 15\% revenue improvement over the best heuristic on real data, and align with our theoretical analysis.

Extensions include instance-dependent bounds based on confirmation timing and walk-in rates, and applications to multi-resource settings with interdependent occupancy durations.
% \arc{Proposal:
% \begin{itemize}
%     \item Sensitivity plot: how regret varies with different confirmation timing
%     \item Idea from \cite{ma2021dynamic}. We compare our algorithm with : $C/q \pm \{20\%,10\%,0\}$ in Stage I and offline optimal in Stage II. Plot $T$-regret curve.
%     \item Should we augment real data? It does not contain all information we need to do simulation
% \end{itemize}
% }

% Moreover, we could show in the next proposition that the $\Tilde{\Omega}(\sqrt{C})$ order of the walk-in flow is required for a constant loss for any online algorithm.
% \begin{proposition}[Loss lower bound]
%     Under the constant duration case with $d=1$, if $\lambda_1^{(2,k)}\le \sqrt{\iota \hat{C}^{(k)}}$ for any $k \in [T]$, we have for any online policy  $({\pi}_1,{\pi}_2)$, with probability at least $\exp(-\iota)$, it gives rise to at least a linear (in $T$) loss:
%     $$
%         \sum_{k=1}^{T}\Lossi(\pi_1,\pi_2) \ge \sum_{k=1}^{T}\min(\ell^{(k)}_{\text{over}},r^{(k)})\ge \min \{\min(\ell^{(k)}_{\text{over}},r^{(k)})\}_{k=1}^{T}\cdot T.
%     $$
% \end{proposition}

\clearpage

\section*{Acknowledgments}
We would like to thank Feng Zhu for helpful and insightful discussions.

\bibliographystyle{apalike}
\bibliography{ref}

\newpage
\setcounter{page}{1}
\begin{APPENDICES}
\small
\fontsize{9}{10}
\clearpage
\section{Proof of Main Results}\label{appendix:proof}

\subsection{Precise Conditions for Busy Season Assumption}\label{appendix::busy_season}
Assumption \ref{assump:busy_season} requires the following conditions for any $k\in[T]$:

\noindent\textbf{Condition (i):} The reservation arrival rate satisfies
\begin{align}\label{eq:require_booking_appendix}
    \lambda^{(1,k)}\ge  \frac{\delta C}{q^{(1,k)}}+\frac{4\iota}{3} +\sqrt{\frac{8\iota^2}{3}+\frac{2\delta C}{q^{(1,k)}}\cdot\iota}.
\end{align}

\noindent\textbf{Condition (ii):} The walk-in arrival rate satisfies
\begin{align}\label{equ::require_lambda_appendix}
    \lambda^{(2,k)}\ge 5\iota+1+4\sqrt{3\delta C\cdot \iota }+\sqrt{10\iota^2 +2\iota+8\iota\sqrt{3\delta C\cdot\iota}}.
\end{align}

These conditions ensure that reservation rates slightly exceed expected departure rates (condition (i)), and walk-in rates are sufficient to offset uncertainties from cancellations and no-shows with confidence level $\exp(-\iota)$ over horizon $T$ (condition (ii)).

\subsection{Formulas for DASS Policy}\label{appendix::DASS_formulas}

This subsection provides the precise formulas for the booking threshold $\hat{B}^{(k)}_{t}$ and estimated capacity $\hat{C}^{(k)}$ used in Algorithm \ref{alg:DASS_stage_I}.

\paragraph{Booking Threshold.} The estimated number of confirmed bookings plus safety stock is given by:
\begin{align}\label{eq:hat_B_t_full}
    \hat{B}^{(k)}_{t} = p^{(k)}(t){B}^{(k)}_{t}+\frac{\iota(1-p^{(k)}_{t})}{3}+\sqrt{\left(\frac{\iota(1-p^{(k)}_{t})}{3}\right)^2 +2\iota{{B}^{(k)}_{t}p^{(k)}(t)(1-p^{(k)}(t))}}.
\end{align}
Here $p^{(k)}(t)$ is the confirmation rate at time $t$ on day $k$, $B^{(k)}_{t}$ is the current number of bookings, and $\iota=O(\log(CT))$ is the confidence parameter.

\paragraph{Estimated Capacity.} The capacity available for new bookings depends on the occupancy duration distribution:

\noindent\textbf{(i) Geometric distribution:} When occupancy duration follows a geometric distribution with parameter $q$, the estimated capacity is
\begin{align}\label{eq:hat_C_geometric}
    \hat{C}^{(k)} = C -\frac{1-q}{q}\left[\lambda^{(1,k)}-\frac{4\iota}{3}-\sqrt{\frac{8\iota^2}{3}+\frac{2\iota(1-q)}{q}\lambda^{(1,k)}}\right].
\end{align}

\noindent\textbf{(ii) Constant duration:} When occupancy duration is constant $d$, the estimated capacity is
\begin{align}\label{eq:hat_C_constant}
    \hat{C}^{(k)} = C -d\left[\lambda^{(1,k)}-\frac{4\iota}{3}-\sqrt{\frac{8\iota^2}{3}+2\iota\lambda^{(1,k)}}\right].
\end{align}

These formulas ensure that with high probability (at least $1-\exp(-\iota)$), the accepted bookings will not exceed the available capacity after accounting for expected occupancy from previous days.

\paragraph{Expected Show-ups in Stage II.} For Algorithm \ref{alg:DASS_stage_II}, the expected show-ups $\hat{N}^{(k)}_{u}$ at time $u$ on Day $k$ is computed as:
\begin{align}\label{equ::StageII standard full}
\hat{N}^{(k)}_{u}=\begin{cases}
    \underbrace{B^{(k)}_{1,u}+q^{(1.k)}\cdot(B^{(k)}-B^{(k)}_{1,u}-B^{(k)}_{2,u})}_{\text{Expected  Type I arrivals}}+\underbrace{W_{1,u}^{(k)}+\alpha\cdot \int_{u}^{1}\lambda^{(2,k)}(s) ds}_{\text{Weighted expected Type II arrivals}},~~0\le u<v,\\
    B^{(k)}_{1,v}+B^{(k)}_{3,v}+W_{1,u}^{(k)},~~ u \ge v,
\end{cases}
\end{align}
where $B^{(k)}_{1,u}$ is the number of Type I customers who checked in during $[0,u]$, $B^{(k)}_{2,u}$ is the number who cancelled, $W^{(k)}_{1,u}$ is the number of accepted walk-ins, $B^{(k)}_{3,v}$ is the confirmed number of Type I customers after confirmation call at time $v$, and $0<\alpha<1$ is a tunable parameter.

\subsection{Proof of Proposition \ref{prop:lower_bound}}
Consider the instance where $C=1$, $\lambda^{(1,k)}=1,\lambda^{(2,k)}=\sqrt{\iota},p^{(k)}=1/2$,  $\loss^{(k)}=r^{(k)}=1$ and $\delta =1$, i.e. the occupancy rate $d=1$ almost surely. In this case, when more than one unit of bookings $B^{(k)}$ are accepted in Stage I, there is a constant $c_1>0$ such that $\P(\text{Inccur overbooking loss}|B^{(k)}>1) > c_1$. When $B^{(k)}=1,$ by \eqref{equ::poisson tail lower}, there is a constant $c_2>0$ such that the following events happen: the total number of arrivals in Stage I $\Lambda^{(1,k)}(0,k) > 1$ and one booking other than the accepted one is not cancelled in Stage II while the accepted one is cancelled in Stage II. When these events happen, the hindsight optimal policy will not have loss and run with full capacity. Moreover there exists constant $c_3$ such that $\P(\Lambda^{(2,k)}[0,1]=0)\ge \exp(-c_3\iota)$ by \eqref{equ::poisson proportion low} and \eqref{equ::poisson proportion upper}. Then we have $\P(\text{Incur idling loss}|B^{(k)}=1)\ge c_2\exp(-\iota).$  When $B^{(k)}=0,$ we similarly have $\P(\text{Incur idling loss}|B^{(k)}=1)\ge c_2\exp(-\iota)$. WLOG we assume $\exp(-c_3\iota)\le \min\{c_1,c_2\}$ as $T\to\infty,$ we then have 
\begin{align*}
    \regret^T(\pi_1,\pi_2)\ge \sum_{t=1}^T\min\{\loss^{(k)},r^{(k)}\}\exp(-c_3\iota) .
\end{align*}
% \paragraph{Case II: (b) holds in Proposition \ref{prop:lower_bound}}
% We prove the case for geometric distributed occupancy duration at first. Assume $C=1,\delta=1/2,r^{(k)}=\loss^{(k)}=1$ and all arrivals are walk-ins for simplicity. By \eqref{equ::poisson tail lower}, there there exist constant probability $c_1>0$ such that there is no occupancy on day $k-1$ and $c_2>0$ such that there are $2$ walk-ins with occupancy duration $1,2$, respectively. The DM needs to decide which to accept or reject both. However, there are constant probability $c_3>0$ such that is one arrival on day $k+1$ with duration $2$ and no arrival on day $k+2$, or there are $1$ arrival on each of day $k+1$ and $k+2$. Therefore, all decisions on day $k$ will have probability of at least $c_3$ to get $1$ loss of idling, while offline optimal policy will not. Therefore, we have 
% \begin{align*}
%     \regret^T(\pi_1,\pi_2)\ge \sum_{t=1}^Tc_1c_2c_3r^{(k)}=\Omega(T).
% \end{align*}

\subsection{Proof of Theorem \ref{prop::StageI regret}}\label{appendix::stage I}
Before proving Theorem \ref{prop::StageI regret}, we first state some well-known concentration inequalities.

\begin{itemize}
    \item Bernoulli trials: For $W \sim B(n,p)$, By one sided Bernstein inequality, we have
    \begin{align}
        \Pr\left[W-np\ge \frac{1-p}{3}\cdot \iota + \sqrt{\left(\frac{1-p}{3}\cdot \iota\right)^2+2 \iota np(1-p)}\right]&\le \exp(-\iota),\label{equ::bounoulli tail}\\
        \Pr\left[W-np\le -\frac{p}{3}\cdot \iota -\sqrt{\left(\frac{p}{3}\cdot \iota\right)^2+2\iota np(1-p)}\right]&\le \exp(-\iota).\label{equ::bounoulli tail lower}
    \end{align}
    \item Poisson distribution: For $V \sim \text{Poisson}(\lambda)$ and $0<\alpha<1$, we have
    \begin{align}
        \Pr\left[V-\lambda \ge  \iota + \sqrt{\iota^2 +2 \lambda \cdot \iota}\right] &\le \exp(-\iota) ,\\
        \Pr\left[V-\lambda \le -\iota - \sqrt{\iota^2 +2 \lambda \cdot \iota}\right] &\le \exp(-\iota).\label{equ::poisson tail lower}\\
        \Pr\left[V \le (1-\alpha)\lambda\right] &\le \exp(-\lambda(1-\alpha)\log(1-\alpha)+\alpha), \label{equ::poisson proportion low}\\
        \Pr\left[V \ge (1+\alpha)\lambda\right] &\le \exp(-\lambda(1-\alpha)\log(1-\alpha)+\alpha). \label{equ::poisson proportion upper}
    \end{align}
\end{itemize}

Then we begin our proof of Proposition \ref{prop::StageI regret}. We analyse the overbooking loss and the idling loss in Appendices \ref{appendix::overbooking} and \ref{appendix::idling}, respectively, and combine them in Appendix \ref{appendix::stageI combine} which concludes the proof.
\subsubsection{Analyse the overbooking loss}\label{appendix::overbooking}
To avoid overbooking, we bound the number of confirmed bookings using concentration inequalities. The following analysis shows that DASS-I maintains this bound with high probability, preventing capacity violations. First, we prove that our algorithm ensures no overbooking loss for a hindsight optimal algorithm on Stage II with high probability. Denote the time  that our policy accepts the \textit{last} type I customer to be $t\le k$. By definition, our policy ensures that:
\begin{align}
    \hat{B}^{(k)}_{t} &={p^{(k)}(t){B}^{(k)}_{t}}+{\frac{\iota(1-p^{(k)}_{t})}{3}+\sqrt{\left(\frac{\iota(1-p^{(k)}_{t})}{3}\right)^2 +2\iota{{B}^{(k)}_{t}p^{(k)}(t)(1-p^{(k)}(t))}{}}}\le \hat{C}^{(k)}. \label{equ::Bk bound}
\end{align}
Given the number of accepted bookings at time $t$, i.e., ${B}^{(k)}_{t}$, we know the ultimate number of accepted bookings ${B}^{(k)} \sim B({B}^{(k)}_{t},p^{(k)}(t))$. Thus, by \eqref{equ::bounoulli tail}, we know $\Pr[{B}^{(k)}\le \hat{C}^{(k)}]\ge 1- \exp(-\iota)$. Moreover, in the second stage, given the number of non-cancelled reservations ${B}^{(k)}$, we know the final number of checked-in type I customers (denoted as $n_{1}^{(k)}$) follows another Bernoulli distribution $n_{1}^{(k)}\sim B({B}^{(k)},q^{(1,k)})$. Reall that $\hat{C}^{(k)}$ satisfies
\begin{align*}
     &\quad q^{(1,k)}\hat{C}^{(k)}+\frac{\iota(1-q^{(1,k)})}{3}+\sqrt{\left(\frac{\iota(1-q^{(1,k)})}{3}\right)^2+2{\iota \hat{C}^{(k)}q^{(1,k)}(1-q^{(1,k)})}{}} \\
    &=\underline{C}^{(k)}:=\left\{\begin{matrix}
        (1-q)C-\frac{\iota(1-q)}{3}- \sqrt{\left(\frac{1-q}{3}\right)^2+ {2\iota C q(1-q)}{}}&~~\text{(Geometric)}\\
        \frac{C}{d} &~~\text{(Constant)}
    \end{matrix}\right.
\end{align*}
Thus, again by \eqref{equ::bounoulli tail} and the high probability upper bound on $B^{(k)}$ \eqref{equ::Bk bound}, with probability at least $1-2\exp(-\iota)$, it holds that
\begin{align*}
    n_{1}^{(k)}\le \left\{\begin{matrix}
        (1-q)C-\frac{\iota(1-q)}{3}- \sqrt{\left(\frac{1-q}{3}\right)^2+ {2\iota C q(1-q)}{}}&~~\text{(Geometric)}\\
        \frac{C}{d} &~~\text{(Constant)}
    \end{matrix}\right.
\end{align*}
For the geometric distributed duration case, suppose there are $n^{(k-1)}$ checked-in customers on Day $k-1$, we know the number of vacant rooms on Day $k$ for the newly checked-in customers $C^{(k)}$ satisfies $C^{(k)}-C+n^{(k-1)}\sim B(n^{(k-1)},q)$. Thus, with probability $1-\exp(-\iota)$, we have at least
\begin{align*}
     C^{(k)}&\ge C-n^{(k-1)}+(1-q)n^{(k-1)}-\frac{\iota(1-q)}{3}- \sqrt{\left(\frac{1-q}{3}\right)^2+ {2\iota n^{(k-1)} q(1-q)}{}}\\
    &\ge   (1-q)C-\frac{\iota(1-q)}{3}- \sqrt{\left(\frac{1-q}{3}\right)^2+ {2\iota C q(1-q)}{}}
\end{align*}
vacant rooms. The second inequality holds because the term is monotone decreasing with $n^{(k-1)}$. Altogether, with probability at least $1-3\exp(-\iota)$, we have $n_{1}^{(k)}\le \hat{C}^{(k)}\le C^{(k)}$, i.e., the final number of checked-in type I customers does not exceeds the number of vacant rooms. We denote the joint event over all the $T$ days as $E^{\rm geo}_{\rm no-over}$ and $E^{\rm const}_{\rm no-over}$, respectively for the two duration cases, either of which happens with probability at least $1-3T\exp(-\iota)$.

Thus, there exists an offline policy $\pi_2^{\star,{\rm geo}}$ incurring no overbooking loss for a hindsight optimal policy on Stage II under $E^{\rm geo}_{\rm no-over}$. To be specific, $\pi_2^{\star,{\rm geo}}$ maximizes the revenue on Day $k$ by reserving the exact number of rooms for final arrival of type II customers and leaving the remaining rooms for walk-in customers.

For the constant duration case, under the joint event that $n^{(k)}_1 \le \frac{C}{d}$ for any $k\in[T]$, we know the hindsight optimal policy on Stage II always prevents overbooking by controlling the number of accepted walk-in customers on each day to be no more than the room capacity that is left for the walk-in customers. Thus, there also exists an offline policy $\pi_2^{\star,{\rm const}}$ incurring no overbooking loss for a hindsight optimal policy on Stage II under $E^{\rm const}_{\rm no-over}$. To adapt to the threshold we set in Stage I, we set $\pi_2^{\star,{\rm const}}$ to allocate at most $C/d$ rooms to newly checked-in customers (including both type I and type II) and maximize the revenue on Day $k$ by reserving the exact number of rooms for final arrival of type II customers and leaving the remaining rooms for walk-in customers under the room capacity of $C/d$. 

Thus, in both cases, with probability at least $1-3T\exp(-\iota)$, our Stage I policy leads to zero overbooking loss with the offline stage II policy $\pi_2^{\star,{\rm geo}}$ or $\pi_2^{\star,{\rm const}}$
throughout the entire decision process.

\subsubsection{Analyse the idling loss}\label{appendix::idling}
Next, we prove that with sufficient walk-in customers, with high probability our stage I policy combined with the offline stage II policy also achieves no more idling loss compared with the optimal policy. For the room allocation on day $k$, if  $\hat{B}^{(k)}_t$ has not reached the threshold $\hat{C}^{(k)}$ throughout stage I (we denote the event as $E_{k,{\rm low}}$). We know our policy keeps accepting type I customers without rejecting any one. Thus, $E_{k,{\rm low}}$ implies that we have not rejected any reservations and the threshold is not reached at the end of Stage I, i.e., 
\begin{align*}
    \hat{B}^{(k)}_{k} 
    &=B_t^{(k)}+\frac{2\iota}{3}=\sum_{i=1}^{\Lambda^{(1,k)}(k-k_0,k)}Y_1^{(0,k)}
    \le \hat{C}^{(k)}.
\end{align*}
Here $Y_i^{(0,k)}\sim {\rm Bernoulli}(p^{(k)}(X_i^{(0,k)}))$ for $i=1,2,\dots,\Lambda^{(1,k)}(k-k_0,k)$. Note that 
\begin{align*}
    \sum_{i=1}^{\Lambda^{(1,k)}(k-k_0,k)}Y_1^{(0,k)}\sim {\rm Poisson}\left(\int_{k-k_0}^{k}\lambda^{(1,k)}(t)1-p^{(k)}(t) dt\right)={\rm Poisson}\left(\lambda^{(1,k)}\right).
\end{align*}
Thus, by the lower tail bound of Poison distribution \eqref{equ::poisson tail lower}, we have with probability at least $1-\exp(-\iota)$,
\begin{align*}
    B_t^{(k)}\ge \lambda^{(1,k)}-\iota - \sqrt{\iota^2 +2 \lambda^{(1,k)} \cdot \iota}.
\end{align*}
Consequently, we have
\begin{align*}
    \hat{C}^{(k)}\ge  B_t^{(k)}+\frac{2\iota}{3}\ge \lambda^{(1,k)}-\frac{\iota}{3}-\sqrt{\iota^2 +2 \lambda^{(1,k)} \cdot \iota},
\end{align*}
which imples that
\begin{align*}
    \lambda^{(1,k)}\le  \hat{C}^{(k)}+\frac{4\iota}{3} +\sqrt{\frac{8\iota^2}{3}+2\hat{C}^{(k)}\cdot\iota}.
\end{align*}
Since we have by definition that
\begin{align}
&\quad q^{(1,k)}\hat{C}^{(k)}+\frac{\iota(1-q^{(1,k)})}{3}+\sqrt{\left(\frac{\iota(1-q^{(1,k)})}{3}\right)^2+2{\iota \hat{C}^{(k)}q^{(1,k)}(1-q^{(1,k)})}{}} \nonumber\\
    &=\left\{\begin{matrix}
        (1-q)C-\frac{\iota(1-q)}{3}- \sqrt{\left(\frac{\iota(1-q)}{3}\right)^2+ {2\iota C q(1-q)}{}}&~~\textbf{(Geometric)}\\
        \frac{C}{d} &~~\textbf{(Constant)}
    \end{matrix}\right.\nonumber\\
    &\le \delta C \nonumber
\end{align}
for any $k\in[T]$, it holds that
\begin{align*}
    \lambda^{(1,k)}\le \hat{C}^{(k)}+\frac{4\iota}{3} +\sqrt{\frac{8\iota^2}{3}+2\hat{C}^{(k)}\cdot\iota}&\le \frac{\delta C}{q^{(1,k)}}+\frac{4\iota}{3} +\sqrt{\frac{8\iota^2}{3}+\frac{2\delta C}{q^{(1,k)}}\cdot\iota}
\end{align*}
which contradicts our assumption on $\lambda^{(1,k)}$ in \eqref{eq:require_booking_appendix}. Thus, we have $\Pr[E_{k,{\rm low}}]\le \exp(-\iota)$ for any $k\in[T]$. We denote  $E_{\rm high}$ to be the event that excludes all the $E_{k,{\rm low}}$ for $k\in[T]$.

We now analyze the complementary case where the threshold approaches capacity, i.e., we exclude the event $E_{k,{\rm low}}$ and work under $E_{\rm high}$. Using lower tail bounds on booking confirmations, we show that sufficient confirmed reservations prevent idling with high probability. In this case, we have $ \hat{C}^{(k)} -1 \le \hat{B}^{(k)}_t \le \hat{C}^{(k)}$ for some $t\in[0,1]$, and we do not receive any additional type I customer after time $t$. Thus, by the lower bound of Bernoulli distribution \eqref{equ::bounoulli tail lower} , we similarly have with probability $1-\exp(-\iota)$,
\begin{align*}
   B^{(k)}&\ge {p^{(k)}(t){B}^{(k)}_{t}}-{\frac{\iota p^{(k)}_{t}}{3}-\sqrt{\left(\frac{\iota p^{(k)}_{t}}{3}\right)^2 +2\iota{{B}^{(k)}_{t}p^{(k)}(t)(1-p^{(k)}(t))}{}}}\\
   &=\hat{B}^{(k)}_{t}-{\frac{\iota(1-p^{(k)}_{t})}{3}-\sqrt{\left(\frac{\iota(1-p^{(k)}_{t})}{3}\right)^2 +2\iota{{B}^{(k)}_{t}p^{(k)}(t)(1-p^{(k)}(t))}{}}}\\
   &\quad-{\frac{\iota p^{(k)}_{t}}{3}-\sqrt{\left(\frac{\iota p^{(k)}_{t}}{3}\right)^2 +2\iota{{B}^{(k)}_{t}p^{(k)}(t)(1-p^{(k)}(t))}{}}}\\
   &\ge \hat{B}^{(k)}_{t}-\frac{\iota}{3}-2\sqrt{\left(\frac{\iota}{3}\right)^2 +2\iota{{B}^{(k)}_{t}p^{(k)}(t)(1-p^{(k)}(t))}{}}\\
   &\ge \hat{C}^{(k)}-1-\frac{\iota}{3}-2\sqrt{\left(\frac{\iota}{3}\right)^2 +2\iota{\hat{C}^{(k)}p^{(k)}(t)(1-p^{(k)}(t))}{}}.
\end{align*}
In the last inequality, we invoke $\hat{B}^{(k)}_{t}\ge \hat{C}^{(k)}-1$ and ${B}^{(k)}_{t}\le \hat{B}^{(k)}_{t}\le \hat{C}^{(k)}$. Thus, again by \eqref{equ::bounoulli tail lower}, with probability $1-2\exp(-\iota)$, we have the final number of checked-in type I customers lower bounded by
\begin{align*}
    n_{1}^{(k)}&\ge q^{(1,k)} B^{(k)}-\frac{\iota q^{(1,k)}}{3}-\sqrt{\left(\frac{\iota q^{(1,k)}}{3}\right)^2+2{\iota  B^{(k)}q^{(1,k)}(1-q^{(1,k)})}{}}\\
    &\ge q^{(1,k)}\cdot \left(\hat{C}^{(k)}-1-\frac{\iota}{3}-2\sqrt{\left(\frac{\iota}{3}\right)^2 +2\iota{\hat{C}^{(k)}p^{(k)}(t)(1-p^{(k)}(t))}{}}\right)\\
    &\quad -\frac{\iota q^{(1,k)}}{3}-\sqrt{\left(\frac{\iota q^{(1,k)}}{3}\right)^2+2{\iota  B^{(k)}q^{(1,k)}(1-q^{(1,k)})}{}}\\
    & \ge q^{(1,k)}\hat{C}^{(k)}+\frac{\iota(1-q^{(1,k)})}{3}+\sqrt{\left(\frac{\iota(1-q^{(1,k)})}{3}\right)^2+2{\iota \hat{C}^{(k)}q^{(1,k)}(1-q^{(1,k)})}{}}\\
    &\quad -q^{(1,k)}\cdot \left(1+\frac{\iota}{3}+2\sqrt{\left(\frac{\iota}{3}\right)^2 +2\iota{\hat{C}^{(k)}p^{(k)}(t)(1-p^{(k)}(t))}{}}\right)\\
    &\quad - \frac{\iota}{3} -2\sqrt{\left(\frac{\iota}{3}\right)^2+2{\iota \hat{C}^{(k)}q^{(1,k)}(1-q^{(1,k)})}{}}\\
    &=\underline{C}^{(k)}-\frac{(1+q^{(1,k)})\iota}{3}-q^{(1,k)}-2q^{(1,k)}\sqrt{\left(\frac{\iota}{3}\right)^2 +2\iota{\hat{C}^{(k)}p^{(k)}(t)(1-p^{(k)}(t))}{}}-2\sqrt{\left(\frac{\iota}{3}\right)^2+2{\iota \hat{C}^{(k)}q^{(1,k)}(1-q^{(1,k)})}{}}.
\end{align*}
For the geometric distributed duration case, since the number of vacant rooms on Day $k$, i.e., $C^{(k)}$ satisfies $C^{(k)}-C+n^{(k-1)}\sim B(n^{(k-1)},q)$. Thus, given that $n^{(k-1)}=C$, i.e., the hotel was at full capacity on Day $k-1$, with probability $1-\exp(-\iota)$, we have at most
\begin{align*}
     C^{(k)}&\le C-n^{(k-1)}+(1-q)n^{(k-1)}+\frac{\iota q}{3}+ \sqrt{\left(\frac{q}{3}\right)^2+ {2\iota n^{(k-1)} q(1-q)}{}}\\
    &=   (1-q)C+\frac{\iota q}{3}+ \sqrt{\left(\frac{\iota q}{3}\right)^2+ {2\iota C q(1-q)}{}}\\
    &\le \underline{C}^{(k)} +\frac{\iota}{3}+2\sqrt{\left(\frac{\iota}{3}\right)^2+ {2\iota C q(1-q)}{}}
\end{align*}
vacant rooms. Here we invoke $\underline{C}^{(k)}=(1-q)C-\frac{\iota(1-q)}{3}- \sqrt{\left(\frac{1-q}{3}\right)^2+ {2\iota C q(1-q)}{}}$ in the last inequality. Thus, after offering rooms to checked-in type I customers, with probability at least $1-4\exp(-\iota)$, there are at most
\begin{align*}
    C^{(k)}-n_1^{(k)}&\le \frac{(1+q^{(1,k)})\iota}{3}+q^{(1,k)}+2q^{(1,k)}\sqrt{\left(\frac{\iota}{3}\right)^2 +2\iota{\hat{C}^{(k)}p^{(k)}(t)(1-p^{(k)}(t))}{}}\\&\quad+2\sqrt{\left(\frac{\iota}{3}\right)^2+2{\iota \hat{C}^{(k)}q^{(1,k)}(1-q^{(1,k)})}{}}+ \frac{\iota}{3}+2\sqrt{\left(\frac{\iota}{3}\right)^2+ {2\iota C q(1-q)}{}}\\
    &\le \frac{(8+q^{(1,k)})\iota}{3}+q^{(1,k)}+2\sqrt{3}\cdot\sqrt{2\iota C(1-q)\left(q^{(1,k)}p^{(k)}(t)(1-p^{(k)}(t))+(1-q^{(1,k)})+q\right)}\\
    &\le 3\iota+1+4\sqrt{3\iota C(1-q)}
\end{align*}
vacant rooms left for walk-in customers. Here we invoke $\hat{C}^{(k)}q^{(1,k)}\le (1-q)C$ in the second inequality and $q \le 1$ in the last one. Recall that by the lower bound of Poisson distribution \eqref{equ::poisson tail lower}, with probability at least $1-\exp(-\iota)$, we have at least $ \lambda^{(2,k)}-\iota - \sqrt{\iota^2 +2 \lambda^{(2,k)} \cdot \iota}$ walk-in customers, so the idling loss for the Stage-II hindsight optimal algorithm is at most
\begin{align}\label{equ::Requirements on lambda}
   r^{(k)}\cdot \left( \lambda^{(2,k)}-\iota - \sqrt{\iota^2 +2 \lambda^{(2,k)} \cdot \iota}- 3\iota-1-4\sqrt{3\iota C(1-q)}\right)\vee 0.
\end{align}
With sufficiently large $\lambda^{(2,k)}$, we have with probability at least $1-4\exp(-\iota)$, the hindsight optimal algorithm on stage II can fill all the vacant rooms by accepting enough customers, achieving zero idling loss. This requirement on $ \lambda^{(2,k)}$ is equivalent to letting
\begin{align*}
    &\lambda^{(2,k)}\ge 5\iota+1+4\sqrt{3\iota C(1-q)}+\sqrt{10\iota^2 +2\iota+8\iota\sqrt{3\iota C(1-q)}}.
\end{align*}
We denote this event as $E^{\rm geo}_{k,{\rm no-idle}}$. Thus, under the event $E_{\rm high}$ and  the joint event of $E^{\rm geo}_{{\rm no-idle}}:=\cup_{k\in[T]} E^{\rm geo}_{k,{\rm no-idle}}$ which happens with probability at least $1-4T\exp(-\iota)$, as long as the hotel is at full capacity on Day $0$, we can ensure filling all the vacant rooms and achieve zero idling loss throughout the $T$ days by the offline stage II policy $\pi_2^{\star,{\rm geo}}$.

For the constant duration case, following the Stage II policy $\pi_2^{\star,{\rm const}}$, let's strictly allocate $C^{(k)}=C/d$ to the newly checked-in customers on Day $k$. For the first $d$ days, we may suffer from vacant rooms since for any day $k\le d$, we have not ensured accommodating $C/d$ customers for all the past $d$ days. To be specific, there are at most $C(d-k+1)/d$ vacant rooms, so leading to a idling loss of at most $r^{(k)}\cdot C(d-k)/d$.
On the other hand, for $k>d$, there are exactly $C/d$ customers that have been stayed in the hotel since Day $k-i$ for any $i=1,2,3,\dots,d$. Thus, the departure of $C/d$ customers that checked in on Day $k-d$ leaves exactly $C/d$ vacant rooms. Then we know there are at most
\begin{align*}
      C^{(k)}-n_1^{(k)}&\le \frac{(1+q^{(1,k)})\iota}{3}+q^{(1,k)}+2q^{(1,k)}\sqrt{\left(\frac{\iota}{3}\right)^2 +2\iota{\hat{C}^{(k)}p^{(k)}(t)(1-p^{(k)}(t))}{}}\\&\quad+2\sqrt{\left(\frac{\iota}{3}\right)^2+2{\iota \hat{C}^{(k)}q^{(1,k)}(1-q^{(1,k)})}{}}\\
      &\le 3\iota+1+2\sqrt{2}\cdot\sqrt{ \frac{2\iota C}{d}\cdot \left(q^{(1,k)}p^{(k)}(t)(1-p^{(k)}(t))+1-q^{(1,k)} \right)}\\
      &\le 3\iota +1 +4\sqrt{\frac{\iota C}{d}}.
\end{align*}
vacant rooms left for walk-in customers. Thus, the idling loss is at most 
\begin{align*}
    r^{(k)}\cdot \left( \lambda^{(2,k)}-\iota - \sqrt{\iota^2 +2 \lambda^{(2,k)} \cdot \iota}-  3\iota -1 -4\sqrt{\frac{\iota C}{d}} \right) \vee 0,
\end{align*}
which approaches zero when
\begin{align*}
\lambda^{(2,k)}\ge 5\iota+1+4\sqrt{\frac{\iota C}{d}}+\sqrt{10\iota^2+2\iota+8\iota\sqrt{\frac{\iota C}{d}}}.
\end{align*}
We denote this event as $E^{\rm const}_{k,{\rm no-idle}}$. Thus, conditional on the event that excludes $\{E_{k,{\rm low}}\}_{k\in[T]}$, under the joint event of $E^{\rm geo}_{{\rm no-idle}}:=\cup_{k\in[T]} E^{\rm geo}_{k,{\rm no-idle}}$ which happens with probability at least $1-4T\exp(-\iota)$, as long as the hotel is at full capacity on Day $0$, we can ensure filling all the vacant rooms and achieve zero idling loss throughout the $T$ days by the stage II.

\subsubsection{Combine the losses}\label{appendix::stageI combine}
By combining the analysis of overbooking loss and idling loss, for the geometric duration case, we have expected regret in Stage I bounded by
    \begin{align*}
\E\left[\sum_{k=1}^{T}\mathcal{R}_1^{(k)}(\hat{\pi}_1)\right]  &\le \sum_{k=1}^T\E_{\Fi^{(T)},\Fii^{(T)}}\brk{{\Lossii^{(k)}(\pi^{\star}_2,\{B^{(k')}(\hat\pi_1))\}_{k'=1}^{k})-\Lossii^{(k)}(\pi^{\star}_2,\{B^{(k')}(\pi^{\star}_1))\}_{k'=1}^{k})}}\\
&\le\sum_{k=1}^T\E_{\Fi^{(T)},\Fii^{(T)}}\brk{{\Lossii^{(k)}(\pi^{\star,{\rm geo}}_2,\{B^{(k')}(\hat\pi_1))\}_{k'=1}^{k})-\Lossii^{(k)}(\pi^{\star}_2,\{B^{(k')}(\pi^{\star}_1))\}_{k'=1}^{k})}}\\
&\le \sum_{k=1}^T\E_{\Fi^{(T)},\Fii^{(T)}}\brk{{\Lossii^{(k)}(\pi^{\star,{\rm geo}}_2,\{B^{(k')}(\hat\pi_1))\}_{k'=1}^{k})}}\\
     &\le
     (1-  \Pr[E^{\rm geo}_{{\rm no-idle}}] -\Pr[E_{{\rm high}}])\cdot C\max_{k}r^{(k)} +(1-\Pr[E^{\rm geo}_{\rm no-over}])\cdot C\max_{k}\ell^{(k)}\\
     &\quad+\sum_{k=1}^T\E_{\Fi^{(T)},\Fii^{(T)}}\brk{{\Lossii^{(k)}(\pi^{\star,{\rm geo}}_2,\{B^{(k')}(\hat\pi_1))\}_{k'=1}^{k})}\mid E^{\rm geo}_{{\rm no-idle}},E_{{\rm high}},E^{\rm geo}_{\rm no-over}}\\
     &\le 5CT\exp(-\iota)\cdot \max_{k}r^{(k)}+3CT\exp(-\iota)\cdot \max_{k}\ell^{(k)}.
    \end{align*}
In the last inequality, we invoke the fact that we suffer from at most $C\max_{k}\ell^{(k)}$ overbooking loss and $C\max_{k}r^{(k)}$ idling loss on each day, and under the joint event of $E^{\rm geo}_{{\rm no-idle}},E_{{\rm high}},E^{\rm geo}_{\rm no-over}$, we achieve zero loss on each day.

For the constant duration case, we similarly have
\begin{align*}   \E\left[\sum_{k=1}^{T}\mathcal{R}_1^{(k)}(\hat{\pi}_1)\right]  &\le\sum_{k=1}^T\E_{\Fi^{(T)},\Fii^{(T)}}\brk{{\Lossii^{(k)}(\pi^{\star,{\rm const}}_2,\{B^{(k')}(\hat\pi_1))\}_{k'=1}^{k})}}\\
 &\le
     (1-  \Pr[E^{\rm const}_{{\rm no-idle}}] -\Pr[E_{{\rm high}}])\cdot \max_{k}r^{(k)} +(1-\Pr[E^{\rm const}_{\rm no-over}])\cdot \max_{k}\ell^{(k)}\\
     &\quad+\sum_{k=1}^T\E_{\Fi^{(T)},\Fii^{(T)}}\brk{{\Lossii^{(k)}(\pi^{\star,{\rm const}}_2,\{B^{(k')}(\hat\pi_1))\}_{k'=1}^{k})}\mid E^{\rm const}_{{\rm no-idle}},E_{{\rm high}},E^{\rm const}_{\rm no-over}}\\
     &\le 5CT\exp(-\iota)\cdot \max_{k}r^{(k)}+3CT\exp(-\iota)\cdot \max_{k}\ell^{(k)}+\sum_{k=1}^{d}\frac{C(d-k)}{d}\cdot r^{(k)}\\
     &\quad+\sum_{k=d+1}^T\E_{\Fi^{(T)},\Fii^{(T)}}\brk{{\Lossii^{(k)}(\pi^{\star,{\rm const}}_2,\{B^{(k')}(\hat\pi_1))\}_{k'=1}^{k})}\mid E^{\rm const}_{{\rm no-idle}},E_{{\rm high}},E^{\rm const}_{\rm no-over}}\\
     &=5CT\exp(-\iota)\cdot \max_{k}r^{(k)}+3CT\exp(-\iota)\cdot \max_{k}\ell^{(k)}+\sum_{k=1}^{d}\frac{C(d-k)}{d}\cdot r^{(k)}.
\end{align*}
The proof is complete.

\subsection{Proof of Lemma \ref{lemma::decoupling}}\label{appendix::decoupling}
By the proof of Proposition \ref{prop::StageI regret}, we know $\hat{\pi}^{\star}_2$ is exactly equivalent to the offline policy $\pi_2^{\star,{\rm geo}}$ or $\pi_2^{\star,{\rm const}}$. Thus, for the geometric duration case, under the joint event of $E^{\rm geo}_{{\rm no-idle}},E_{{\rm high}},E^{\rm geo}_{\rm no-over}$, we have $\Lossii^{(k)}(\pi^{\star,{\rm geo}}_2,\{B^{(k')}(\hat\pi_1))\}_{k'=1}^{k})=0$ for $k\in[T]$. 
For the constant duration case, as analysed in the proof of Proposition \ref{prop::StageI regret}, under the joint event of $E^{\rm const}_{{\rm no-idle}},E_{{\rm high}},E^{\rm const}_{\rm no-over}$, for $k\le d$ which happens with probability at least $1-8T\exp(-\iota)$, we face a revenue loss of at most $(d-k)/C\cdot r^{(k)}$. For $k>d$, we eliminate the loss. Thus, we have 
\begin{align*}
   \sum_{k=1}^{T}\Lossii^{(k)}(\pi^{\star,{\rm geo}}_2,\{B^{(k')}(\hat\pi_1))\}_{k'=1}^{k})\le \sum_{k=1}^{d}\frac{C(d-k)}{d}\cdot r^{(k)}.
\end{align*}
In both cases, we achieve a constant loss by the mixed policy of $(\hat\pi_1,\hat{\pi_2}^{\star})$, thus a constant difference of loss between $(\hat\pi_1,\hat{\pi_2}^{\star})$ and $(\hat\pi_1,{\pi_2}^{\star})$, which concludes our proof.

\subsection{Proof of Theorem \ref{prop:HO_gap}}\label{appendix::Stage II}

\paragraph{Complete bound in Theorem \ref{prop:HO_gap}.} The exact bound for the Stage II loss is:
\begin{align}\label{eq:stage_II_full_bound}
    \begin{aligned}
        &\quad\E_{\Fii^{(k)}}\left[\Lossii^{(k)}(\hat{\pi}_2,\{B^{(k')}(\hat\pi_1))\}_{k'=1}^{k})-\Lossii^{(k)}(\hat{\pi}^{\star}_2,\{B^{(k')}(\hat\pi_1))\}_{k'=1}^{k})\right]\\&\le \lambda^{(2,k)}_{[0,v]}\cdot \left((\ell_{\rm over}^{(k)}+r^{(k)})\cdot\exp\left(\frac{-\frac{\left(\alpha\lambda^{(2,k)}_{[v,1]}\right)^2}{2}}{{B^{(k)}}q^{(1,k)}(1-q^{(1,k)})
        +\frac{\alpha\lambda^{(2,k)}_{[v,1]}}{3}}\right)+ r^{(k)}\cdot\exp\left(-\lambda^{(2,k)}_{[v,1]}\cdot\left(2\alpha\log(2\alpha)+1-2\alpha \right)\right)\right).
  \end{aligned}
\end{align}

\paragraph{Homogeneous case.} When both the walk-in customer flow and the check-in flow of Type I customers are homogeneous, i.e., $\lambda^{(2,k)}(s)=\lambda^{(2,k)}$ and $\gamma^{(k)}(s)=1$, the loss difference is bounded by
 \begin{align}\label{eq:stage_II_homogeneous}
         v\lambda^{(2,k)} \cdot \Bigg((\ell_{\rm over}^{(k)}+r^{(k)})\cdot\exp\left(\frac{-\frac{\alpha^2(1-v)\lambda^{(2,k)}}{2}}{\frac{{\delta C}(1-q^{(1,k)})}{\lambda^{(2,k)}}
        +\frac{\alpha}{3}}\right)+r^{(k)}\cdot\exp\left(-(1-v)\lambda^{(2,k)}\cdot\left(2\alpha\log(2\alpha)+1-2\alpha \right)\right)\Bigg).
    \end{align}
This error decays exponentially with $1-v$. The exact condition for achieving $O(1)$ regret is
\begin{align}\label{eq:stage_II_call_timing_exact}
    (1-v)\lambda^{(2,k)}\ge \max\left(\frac{\iota+\sqrt{\iota^2+18(1-v)\delta C\cdot \iota}}{3\alpha},\frac{\iota}{2\alpha\log(2\alpha)+1-2\alpha}\right).
\end{align}

\paragraph{Proof outline.} The proof of Theorem \ref{prop:HO_gap} proceeds as follows. In Appendices \ref{appendix::decision decoupling} and \ref{appendix::Stage 2 policy def}, we give a rigorous definition on the optimal single day policy and reformulate our stage II policy as a mixture of a fully online policy and the single-day offline optimal policy, respectively. In Appendix \ref{appendix::regret decompose}, we decompose the stage II regret into a summation of increment losses, each of which corresponds to  a wrong decision made on the walk-in customer. In Appendix \ref{appendix::wrong decision prob}, we further analyze the probability that a wrong decision is made by our Stage II policy. Finally, in Appendix \ref{appendix::combine}, by combining all the increment losses together and taking expectation over all the sample paths to conclude our proof.
\subsubsection{Decision decoupling}\label{appendix::decision decoupling}
In Lemma \ref{lemma::decoupling}, we have shown that under our Stage I algorithm, with high probability, the Stage
 II hindsight optimal policies can be decomposed into the concatenation of single-day policies
 \begin{align*}
     \pi_2^{\star}=\pi_{2}^{(1)\star} \times \pi_{2}^{(2)\star} \times \cdots \times \pi_{2}^{(T)\star},
 \end{align*}
 where the optimal single-day policy $\pi_{2,k}^{\star}$ makes Stage II decision on day $k$ that maximizes the revenue on Day $k$ with access to full customer information, regardless of its potential effect on the future days caused by the existence of duration. 
Given a fixed day $k$, we are given the total number of reserved customers and the number of vacant rooms allocated at the beginning of the check-in process on day $k$ are denoted $B^{(k)}$ and $\Tilde{C}^{(k)}$. For any time $u \in [0,1]$, we further denote the number of checked-in reserved customers, canceled reserved customers, and checked-in walk-in customers at time $u$ by $B^{(k)}_{1,u},B^{(k)}_{2,u}$ and $W^{(k)}_{1,u}$. For the future arrival of the remaining $B^{(k)}-B^{(k)}_{1,u}-B^{(k)}_{2,u}$ reserved customers, there are $B^{(k)}_{3,u}$ customers choosing to check in. With all the information known, we can rigorously define the optimal single-day policy $\pi_2^{
(k)\star}$ as follows.
    \begin{itemize}
        \item If $B^{
(k)}_{1,u}+W^{
(k)}_{1,u}+B^{
(k)}_{3,u} \le \Tilde{C}^{
(k)}$, which means that there will be no overbooking given the current number of accepted walk-in customers on day $k$,
$\pi_2^{
(k)\star}$ reserves $B^{
(k)}_{3,u}$ for the coming customers with reservations and allocate the remaining $\Tilde{C}^{
(k)}-B^{
(k)}_{1,u}-W^{
(k)}_{1,u}-B^{
(k)}_{3,u}$ rooms to potential walk-in customers.
        \item If $B^{(k)}_{1,u}+W_{1,u}^{(k)}+B_{3,u}^{(k)}> \Tilde{C}^{(k)}$, which means overbooking will happend, $\pi_2^{
(k)\star}$ stops receiving walk-in customers and reserves all the remaining rooms for the coming customers with reservations. At last, the hotel will face $B^{(k)}_{1,u}+W_{1,u}^{(k)}+B_{3,u}^{(k)}-\Tilde{C}^{(k)}$ overbooking reservations.
    \end{itemize}
It is easy to verify that this algorithm is \textbf{single-day offline optimal}, i.e., a solution to the optimization problem described in Lemma \ref{lemma::decoupling}. Full arrival and booking information eliminates all the randomness and the Stage II regret. 
% Moreover, the early we have access to the information, the more randomness we avoid. See Figure \ref{fig::randomness of reservation} for an example. Moreover, for any $u\in [0,1]$, we denote $\pi_{2,[u,1]}^{
% (k)\star}$ the single-day optimal policy restricted on the time horizon  $[u,1]$. In the next section, we will design an online policy that also only utilizes the customer information on Day $k$, regardless of the policy's effect on future room allocation.

% \begin{figure}
%     \centering
%     \includegraphics[width=\linewidth]{reserve.pdf}
%     \caption{The number of checked-in customers with reservation and the confidence band of the total number of checked-in customers with reservation. We simulate the data with $300$ reservations and $p=0.6$ check-in probability, resulting in $170$ checked-in customers and $130$ cancellations. We observe that the gap between the lower bound and upper bound becomes narrower as time proceeds.}
%     \label{fig::randomness of reservation}
% \end{figure}
 \subsubsection{Reformulate our policy}\label{appendix::Stage 2 policy def}
For the real setting, since we have no hindsight information $B_{3,u}^{(k)}$ before time $u\le v$, but only the online information $B^{(k)}$, $\Tilde{C}^{(k)}$, $B^{(k)}_{1,u},B^{(k)}_{2,u}$ and $W_{1,u}^{(k)}$, we need to design an online policy to make decisions. Recall that we have designed an online policy in Section \ref{sec:Stage II} that uses the conditional expectations of total shown-ups as the standard to decide whether to accept a new walk-in customer. To be specific, given the past information $B^{(k)}$, $\Tilde{C}^{(k)}$, $B^{(k)}_{1,u},B^{(k)}_{2,u}$ and $W_{1,u}^{(k)}$, we can compute the expected  shown ups as 
\begin{align*}
    \hat{N}^{(k)}_{u}=\E\left[B_{3,u}^{(k)}|B^{(k)},B^{(k)}_{1,u},B^{(k)}_{2,u}\right]+B^{(k)}_{1,u}+W_{1,u}^{(k)}+\alpha \E\left[W^{(k)}_{2,u}|B^{(k)},B^{(k)}_{1,u},B^{(k)}_{2,u}\right].
\end{align*}
Here $W^{(k)}_{2,u}$ is the number of arriving walk-in customers after time $u$, and $0\le \alpha \le 1$ is parameter to be chosen later. Only if $\hat{N}^{(k)}_u < \Tilde{C}^{(k)} $, we accept the walk-in customer showing up at time $u$. We denote the policy that only utilizes online information throughout the decision process as $\Tilde{\pi}_2^{(k)}$ (i.e., without confirmation call). Then, with a confirmation calling time at $v$, our policy $\hat{\pi}_2=\hat{\pi}_2^{(1)}\times\cdots\times\hat{\pi}_2^{(T)}$ can also be seen as a mixture of the online algorithm and the offline optimal one. We define the mixture policy as follows.

\begin{definition}
    \label{def:mix}
    For $0\le u\le 1$, we define $\mix^{(k)}_{2,u}$ as the policy that applies the online policy $\Tilde{\pi}_2^{(k)}$ described above in time $[0,u)$ and applying the single-day offline optimal policy $\pi^{(k)\star}_{2}$ to the remaining time period of $[u,1]$. Specially, define $\mix^{(k)}_{2,0}=\pi^{(k)\star}_{2}$ as the policy that applies the single-day optimal policy throughout the decision process and $\mix^{(k)}_{2,1}=\Tilde{\pi}_2^{(k)}$ as the policy that applies $\Tilde{\pi}_2^{(k)}$ throughout the process.
\end{definition}
 For our setting, if we gain the entire information, i.e., the confirmation call at time $v$, we actually conduct the mixture policy of $\hat{\pi}_2^{(k)}=\mix^{(k)}_{2,v}$. Note that here $\mix^{(k)}_{2,v}$ is dependent on $\hat\pi_2^{(k)}$. We do not explicitly write such dependence for sake of notation simplicity. 
 
  \subsubsection{Decompose the regret}\label{appendix::regret decompose}
 For notation simplicity, we abbreviate the revenue of any Stage II policy $\pi^{(k)}_2$ on Day $k$  under a realization of $\Fii^{(k)}$ as
 \begin{align*}
     \pow^{(k)}(\pi^{(k)}_2|\Fii^{(k)}):=\Lossii^{(k)}(\pi^{(k)}_2,\{B^{(k')}(\hat\pi_1))\}_{k'=1}^{k}).
 \end{align*}
 By definition, given any realization of  the customer process  $\Fii^{(k)}$, $\pow^{(k)}(\mix_{2,u}^{(k)}\mid \Fii^{(k)})$ is right-continuous staircase function that may only increase its value at point $u\in[0,1]$ when a walk-in customer comes at time $u$ and $\hat\pi_2^{(k)}$ makes a \textit{wrong decision} on whether to accept the customer compared with the single-day optimal policy, which leads to an increase in the loss, i.e., $\pow^{(k)}(\mix^{(k)}_{2,u}|\Fii^{(k)}) > \pow^{(k)}(\mix^{(k)}_{2,u'}|\Fii^{(k)})$ for any $u' >u$. That is, a wrong decision happens if following $\Tilde{\pi}_2^{(k)}$ until time $u$ (inclusive) can be inferior to following $\pi$ before $u$ (exclusive). Recall that we use $\Lambda^{(2,k)}[0,1]$ to represent the total number of type-II customer arrivals on Day $k$ and $X^{(2,k)}_j \in [0,1]$ represents the time at which the $j$-th type-II customer arrives. We can thus decompose the Stage II regret on Day $k$ as follows:
\begin{align}
    \pow(\pi_2^{(k)\star}|\Fii^{(k)}) - \pow(\hat\pi_2^{(k)}|\Fii^{(k)}) &= \pow(\pi^{(k)}_{2,v}|\Fii^{(k)})-\pow(\pi^{(k)}_{2,0}|\Fii^{(k)})\nonumber\\
    &= \sum_{X^{(2,k)}_j\le v}\brk{\pow^{(k)}\left(\mix_{2,X^{(2,k)}_j}|\Fii^{(k)}\right)-\lim_{u \rightarrow X^{(2,k)}_j+0} \pow^{(k)}\left(\mix_{2,u}|\Fii^{(k)}\right)}. \label{eq:regret_decomp}
\end{align}

Analyzing the regret is equivalent to bounding each term in the summation, i.e., the incremental loss caused by making a wrong decision on the $j$-th customer, and add them up altogether. 

We now categorize a wrong decision into two types: \textit{wrong accept} and \textit{wrong reject}. We call that \textit{$\hat\pi_2^{(k)}$ wrongly accepts a walk-in customer at time $u$}  if a walk-in customer comes at time $u$, and $\hat\pi_2^{(k)}$ accepts the new walk-in customer which leads to a increase in the loss of revenue. Similarly call that \textit{$\hat\pi_2^{(k)}$ wrongly rejects} a walk-in customer at time $u$ if at time $u$ if a walk-in customer comes at time $u$, and $\hat\pi_2^{(k)}$ rejects the new walk-in customer and leads to a loss of revenue. For $1\le k\le T$, given a realization of $\Fii^{(k)}$ and $u \in[0,1)$, the increment loss with the event of wrong acceptance/rejection is presented as
\begin{align*}
    & \pow^{(k)}\left(\mix_{2,u}|\Fii^{(k)}\right)-\lim_{u' \rightarrow u+0} \pow^{(k)}\left(\mix_{2,u'}|\Fii^{(k)}\right) \\
    & \le \ell_{\rm over}^{(k)}\cdot\mathbf{1}\{\textit{$\hat\pi_2^{(k)}$ wrongly accepts a walk-in customer at time $u$}\}\\&\quad +r^{(k)}\cdot \mathbf{1}\{\textit{$\hat\pi_2^{(k)}$ wrongly rejects a walk-in customer at time $u$}\}.
\end{align*}

This term is also called \textit{Marginal Compensation} in \cite{vera2021bayesian}. We will derive a upper bound for this term under different types of the customer flow. We restate our assumptions on two types of customer flows as follows:
\begin{itemize}
    \item For each reserved customer on Day $k$, her arriving (booking confirmation) time follows a distribution with a density function $\gamma^{(k)}:[0,1]\rightarrow \R_{\ge 0}$. Upon arrival, she chooses to check in with probability $q^{(1,k)}>0$ and cancel the booking with probability $1-q^{(1,k)}>0$.  For notation simplicity, we temporarily denote $p=q^{(1,k)}$.
    \item The flow of walk-in customers follows non-homogeneous Poisson process with parameter being $\lambda^{(2,k)}(u)$ so that the total number of the walk-in arrival before time $v$ is $\int_{0}^{v}\lambda^{(2,k)}(u) du$.
\end{itemize}

With the assumptions above, we could first show that the conditional distribution of $B_{3,u}^{(k)}$ given $B^{(k)}_{1,u}$ and $B^{(k)}_{2,u}$ are still Binomial, which is presented in Lemma \ref{lemma::cond binomial}.
\begin{lemma}\label{lemma::cond binomial}
    Given any time $0\le u \le U$, condition on the number of arriving reserved customers $B^{(k)}_{1,u}$ and $B^{(k)}_{2,u}$ before $u$, the number of check-in arrivals after $u$ follows $$p(B_{3,u}^{(k)}|B^{(k)}_{1,u},B^{(k)}_{2,u})\sim {\rm Binomial}(B^{(k)}-B^{(k)}_{1,u}-B^{(k)}_{2,u},q^{(1,k)}).$$   
\end{lemma}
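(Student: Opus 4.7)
The plan is to exploit the independence structure built into the model in Section \ref{sec:arrival}: each of the $B^{(k)}$ reservations carried into Stage II on Day $k$ is described by an i.i.d. pair $(X_i^{(1,k)}, Y_i^{(1,k)})$ with $X_i^{(1,k)} \sim \gamma^{(k)}$ and, independently of $X_i^{(1,k)}$, $Y_i^{(1,k)} \sim \text{Bernoulli}(q^{(1,k)})$. The three counts $B^{(k)}_{1,u}$, $B^{(k)}_{2,u}$ and $n_3 := B^{(k)} - B^{(k)}_{1,u} - B^{(k)}_{2,u}$ then partition the $B^{(k)}$ customers according to which of the three disjoint events $\{X_i^{(1,k)} \le u, Y_i^{(1,k)} = 1\}$, $\{X_i^{(1,k)} \le u, Y_i^{(1,k)} = 0\}$, $\{X_i^{(1,k)} > u\}$ they fall into, so the joint vector of counts is trinomial.

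First I would write $B^{(k)}_{3,u} = \sum_{i=1}^{B^{(k)}} \mathbbm{1}\{X_i^{(1,k)} > u\}\, Y_i^{(1,k)}$ and condition on the random index subset $S := \{i : X_i^{(1,k)} > u\}$, whose cardinality is $n_3$. By the independence of the pairs across $i$ and the within-pair independence $X_i^{(1,k)} \indep Y_i^{(1,k)}$, the restrictions $\{Y_i^{(1,k)}\}_{i \in S}$ remain i.i.d.\ $\text{Bernoulli}(q^{(1,k)})$ conditional on $S$. Summing $n_3$ such i.i.d.\ Bernoullis gives $\text{Binomial}(n_3, q^{(1,k)})$. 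Since this conditional distribution depends on $S$ only through $|S| = n_3$, the tower property collapses the conditioning onto the coarser $\sigma$-algebra generated by $(B^{(k)}_{1,u}, B^{(k)}_{2,u})$, yielding the stated Binomial law.

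The main (very mild) obstacle is purely bookkeeping: I need to be careful that conditioning on the two counts $(B^{(k)}_{1,u}, B^{(k)}_{2,u})$ rather than on the richer information available at time $u$ (which includes the ordered arrival times and check-in flags of the early arrivals) is what is being claimed, and that no residual dependence on the arrival-time randomness of the remaining $n_3$ customers leaks into $B^{(k)}_{3,u}$. Both points are handled by the same observation: once we have stripped off the Bernoulli labels via the independence $X_i^{(1,k)} \indep Y_i^{(1,k)}$, the remaining customers' check-in decisions are exchangeable Bernoullis that are oblivious to arrival times. This is the standard multinomial-thinning argument, so no deeper tool is required.
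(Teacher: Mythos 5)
Your proposal is correct and follows essentially the same route as the paper's proof: both arguments rest on the independence of each customer's check-in decision $Y_i^{(1,k)}$ from the arrival time $X_i^{(1,k)}$ and across customers, so that conditioning on $(B^{(k)}_{1,u},B^{(k)}_{2,u})$ carries no information beyond the count $B^{(k)}-B^{(k)}_{1,u}-B^{(k)}_{2,u}$ of late arrivals, whose labels remain i.i.d.\ ${\rm Bernoulli}(q^{(1,k)})$. Your version merely makes the multinomial-thinning and tower-property bookkeeping more explicit than the paper's three-line computation.
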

The proof is provided in Appendix \ref{appendix::omit stage II}.
\subsubsection{Analyse the probability of wrong decision}\label{appendix::wrong decision prob} With the lemma above, we can further bound the probability of wrong acceptance and rejection of the policy $\hat\pi_2^{(k)}$ at time $u$ as
    \begin{align*}
        &\quad \Pr[\hat\pi_2^{(k)}\textit{wrongly accepts a walk-in customer at time } u]\\&=\Pr\left[ \hat{N}^{(k)}_u < \Tilde{C}^{(k)},  B_{3,u}^{(k)}+B^{(k)}_{1,u}+W_{1,u}^{(k)}\ge \Tilde{C}^{(k)}\right]\\
        &=\Pr\bigg[ p\cdot(B^{(k)}-B^{(k)}_{1,u}-B^{(k)}_{2,u})+ B^{(k)}_{1,u}+W_{1,u}^{(k)}+\alpha\int_{u}^{1}\lambda^{(2,k)}(s) ds< \Tilde{C}^{(k)},  B_{3,u}^{(k)}+B^{(k)}_{1,u}+W_{1,u}^{(k)}\ge \Tilde{C}^{(k)}\bigg],
    \end{align*}
    and 
    \begin{align*}
         &\quad \Pr[\hat\pi_2^{(k)}\textit{ wrongly rejects  a walk-in customer at time } u]\\&=\Pr\left[ \hat{N}^{(k)}_u \ge \Tilde{C}^{(k)},  B_{3,u}^{(k)}+B^{(k)}_{1,u}+W_{1,u}^{(k)}+W^{(k)}_{2,u}\le \Tilde{C}^{(k)}\right]\\
        &=\Pr\bigg[ p\cdot(B^{(k)}-B^{(k)}_{1,u}-B^{(k)}_{2,u})+ B^{(k)}_{1,u}+W_{1,u}^{(k)}+\alpha\int_{u}^{1}\lambda^{(2,k)}(s) ds\ge \Tilde{C}^{(k)},  B_{3,u}^{(k)}+B^{(k)}_{1,u}+W_{1,u}^{(k)}+W_
        {2,u}\le \Tilde{C}^{(k)}\bigg].
    \end{align*}
Let's analyze the event of wrongly rejection, i.e.,
\begin{align*}
    \left\{\begin{aligned}
        &p\cdot(B^{(k)}-B^{(k)}_{1,u}-B^{(k)}_{2,u})+ B^{(k)}_{1,u}+W_{1,u}^{(k)}+\alpha\int_{u}^{1}\lambda^{(2,k)}(s) ds\ge \Tilde{C}^{(k)},\\
        &B_{3,u}^{(k)}+B^{(k)}_{1,u}+W_{1,u}^{(k)}+W_
        {2,u}\le \Tilde{C}^{(k)}.
    \end{aligned}\right.
\end{align*}

Subtracting the two terms give rise to
\begin{align}\label{equ::wrongly reject}
    B_{3,u}^{(k)}-\E\brk{B_{3,u}^{(k)}|B^{(k)}_{1,u},B^{(k)}_{2,u}}+W^{(k)}_{2,u}-\E\brk{W^{(k)}_{2,u}}\le -(1-\alpha)\int_{u}^{1}\lambda^{(2,k)}(s) ds
\end{align}
which we denote this event of $B_{3,u}^{(k)}$ and $W^{(k)}_{2,u}$ conditioned on $B^{(k)}_{1,u}$ and $B^{(k)}_{2,u}$ as $E_{u,\rm rj}(B_{3,u}^{(k)},W^{(k)}_{2,u}|B^{(k)}_{1,u},B^{(k)}_{2,u})$, similarly, wrongly acceptance gives rise to 
\begin{align}\label{equ::wrongly accept}
    B_{3,u}^{(k)}-\E\brk{B_{3,u}^{(k)}|B^{(k)}_{1,u},B^{(k)}_{2,u}}\ge \alpha\int_{u}^{1}\lambda^{(2,k)}(s) ds.
\end{align}
We denote this event by $E_{u,\rm ac}(B_{3,u}^{(k)}|B^{(k)}_{1,u},B^{(k)}_{2,u})$.

 Since we only consider the loss within a single day, for notation simplicity, we use $\{X_i\}_{i=1}^{B^{(k)}}$ and $\{Y_i\}_{i=1}^{B^{(k)}}$ to represent typeI customers' arrival time and check-in status on Day $k$. To be specific, $X_1,X_2,\dots,X_{B^{(k)}}\sim {\rm Unif}_{[0,1]}$ denotes the time of arrival, and $Y_1,Y_2,\dots,Y_{B^{(k)}}\sim {\rm Binomial}(1,p)$ denotes whether the customer check in $(Y_i=1)$ or cancel the booking $(Y_i=0)$. By the definition, we directly have 
\begin{align*}
    B^{(k)}_{1,u}=\sum_{i=1}^{B^{(k)}}\mathbf{1}\{X_i<u\}Y_i,~~B^{(k)}_{2,u}=\sum_{i=1}^{B^{(k)}}\mathbf{1}\{X_i<u\}(1-Y_i)~~\text{and}~~B_{3,u}^{(k)}=\sum_{i=1}^{B^{(k)}}\mathbf{1}\{X_i\ge u\}Y_i.
\end{align*}
Thus, we can rewrite the events $E_{u, \rm rj}$ and $E_{u, \rm ac}$ as
\begin{align*}
    &E_{u, \rm rj}:~~\sum_{i=1}^{B^{(k)}}\left(p\mathbf{1}\{X_i<u\}+\mathbf{1}\{X_i\ge u\}Y_i\right)-pB^{(k)}+W^{(k)}_{2,u}-\E\brk{W^{(k)}_{2,u}}\le -(1-\alpha)\int_{u}^{1}\lambda^{(2,k)}(s) ds,\\
    &E_{u, \rm ac}:~~\sum_{i=1}^{B^{(k)}}\left(p\mathbf{1}\{X_i<u\}+\mathbf{1}\{X_i\ge u\}Y_i\right)-pB^{(k)}\ge \alpha\int_{u}^{1}\lambda^{(2,k)}(s) ds.
\end{align*}
Let's denote $\lambda_{[u,1]}=\int_{u}^{1}\lambda^{(2,k)}(s) ds$ and $Z_i=p\mathbf{1}\{X_i<u\}+\mathbf{1}\{X_i\ge u\}Y_i-p$ for any $i\in[B^{(k)}]$. By standard computation, we have $\E\brk{Z_i}=0$ and ${\rm Var}\brk{Z_i}=p(1-p)\int_{u}^{1}\gamma^{(k)}(s) ds$. Note that $|Z_i|\le 1$ for sure, by Bernstein Inequality, we have 
\begin{align}
    \Pr[E_{u,\rm ac}]&=\Pr\brk{\sum_{i=1}^{B^{(k)}}Z_i \ge \alpha\lambda_{[u,1]}}\le \exp\left(\frac{-\frac{\alpha^2\lambda_{[u,1]}^2}{2}}{B^{(k)}p(1-p)\int_{u}^{1}\gamma^{(k)}(s) ds+\frac{\alpha\lambda_{[u,1]}}{3}}\right) \label{equ::ac bound}
\end{align}
Similarly, denote a fixed constant $0<\beta<1-\alpha$ to be chosen later, again by Berinstein inequality and the tail bound of Poisson distribution, we have 
\begin{align}
    \Pr\brk{E_{u,\rm rj}}&= \Pr\brk{\sum_{i=1}^{B^{(k)}}Z_i+W^{(k)}_{2,u}-\E\brk{W^{(k)}_{2,u}}\le -(1-\alpha)\lambda_{[u,1]}}\nonumber\\
    &\le \Pr\brk{\sum_{i=1}^{B^{(k)}}Z_i\le -\beta\lambda_{[u,1]},}+\Pr\brk{W^{(k)}_{2,u}-\E\brk{W^{(k)}_{2,u}}\le -(1-\alpha-\beta)\lambda_{[u,1]}}\nonumber\\
    &\le\exp\left(\frac{-\frac{\beta^2\lambda_{[u,1]}^2}{2}}{B^{(k)}p(1-p)\int_{u}^{1}\gamma^{(k)}(s) ds+\frac{\beta\lambda_{[u,1]}}{3}}\right)+\exp\left(-\lambda_{[u,1]}\cdot\left((\alpha+\beta)\log(\alpha+\beta)+1-\alpha-\beta \right)\right).\label{equ::rj bound}
\end{align}

\subsubsection{Combine all the losses together}\label{appendix::combine}
 As discussed above, we only need to make a decision when a walk-in customer arrives. We assume their arriving time is presented in order as 
\begin{align*}
    0\le \tau_1<\tau_2<\tau_3<\dots <\tau_{\Lambda^{(2,k)}}\le 1.
\end{align*}
Here $\Lambda^{(2,k)}=\Lambda^{(2,k)}_{[0,1]}$ and we assume no multiple jumps at one time for technical convenience. Note that $\Lambda^{(2,k)}$ follows Poisson distribution with parameter $\lambda_1=\int_{0}^{1}\lambda^{(2,k)}(s) ds$, and $\tau_1,\tau_2,\tau_3,\dots ,\tau_{\Lambda^{(2,k)}_{[0,1]}}$ can be seen as the order statistics of $\Lambda^{(2,k)}$ i.i.d. random variables sampled from the distribution with c.d.f. being $p_{\lambda}(u)=\int_{0}^{u}\lambda^{(2,k)}(s) ds/\int_{0}^{1}\lambda^{(2,k)}(s) ds$ . Since there are no extra decisions made between any of two adjacent jumps, the regret also stays the same until next jump. Thus, we can write the regret as
\begin{align*}
    \pow(\pi_2^{(k)\star}|\Fii^{(k)}) - \pow(\hat\pi_2^{(k)}|\Fii^{(k)}) = \sum_{\tau_i \le v }\brk{\pow^{(k)}(\mix^{(k)}_{2,\tau_{i}}|\Fii^{(k)})-\lim_{u \rightarrow \tau_{i}+0}\pow^{(k)}(\mix^{(k)}_{2,u}|\Fii^{(k)})}.
\end{align*}
Taking expectation over all sample paths, the regret gives rise to 
\begin{align*}
    &\quad\E_{\Fii^{(k)}}\brk{\pow^{(k)}(\pi_2^{(k)\star}|\Fii^{(k)}) - \pow(\hat\pi_2^{(k)}|\Fii^{(k)})}\\&= \E_{\Fii^{(k)}}\brk{\sum_{\tau_i \le v }\brk{\pow^{(k)}(\mix^{(k)}_{2,\tau_{i}}|\Fii^{(k)})-\lim_{u \rightarrow \tau_{i}+0}\pow^{(k)}(\mix^{(k)}_{2,u}|\Fii^{(k)})}}\\
    &=\E_{{\Lambda^{(2,k)}}}\brk{\E_{\tau_{1:{\Lambda^{(2,k)}}}}\E_{X_{1:B^{(k)}},Y_{1:B^{(k)}}}\brk{\sum_{\tau_i \le v }\brk{\pow^{(k)}(\mix^{(k)}_{2,\tau_{i}}|\Fii^{(k)})-\lim_{u \rightarrow \tau_{i}+0}\pow^{(k)}(\mix^{(k)}_{2,u}|\Fii^{(k)})}}}
    \\
     &= \E_{{\Lambda^{(2,k)}}}\brk{\E_{\tau_{1:{\Lambda^{(2,k)}}}}\E_{X_{1:B^{(k)}},Y_{1:B^{(k)}}}\brk{\sum_{\tau_i \le v }\brk{\ell_{\rm over}^{(k)}\cdot\mathbf{1}\{\pi\text{ wrongly accepts }\tau_i\}+r^{(k)}\cdot \mathbf{1}\{\pi\text{ wrongly rejects }\tau_i\}}}}
    \\
    &= \E_{{\Lambda^{(2,k)}}}\brk{\E_{\tau_{1:{\Lambda^{(2,k)}}}}\brk{\sum_{\tau_i \le v }\brk{\ell_{\rm over}^{(k)}\cdot\Pr\{\pi\text{ wrongly accepts }\tau_i\}+r^{(k)}\cdot \Pr\{\pi\text{ wrongly rejects }\tau_i\}}}}.
\end{align*}
Note that for each $i\in[{\Lambda^{(2,k)}}]$, we have 
\begin{align*}
     &\quad\E_{{\Lambda^{(2,k)}}}\brk{\E_{\tau_{i}}\brk{\mathbf{1}\{\tau_i \le v \}\brk{\ell_{\rm over}^{(k)}\cdot\Pr\{\pi\text{ wrongly accepts }\tau_i\}+r^{(k)}\cdot \Pr\{\pi\text{ wrongly rejects }\tau_i\}}}}\\
     &\le \E_{{\Lambda^{(2,k)}}}\brk{\int_{0}^{v} \left(\ell_{\rm over}^{(k)}\cdot\Pr[E_{\tau_i,ac}]+r^{(k)}\cdot \Pr[E_{\tau_i,rj}]  \right)p_{\lambda}(\tau_i)d\tau_i}.
     % &=\E_{{\Lambda^{(2,k)}}}\brk{\int_{0}^{v} \E_{B_{1,\tau_i},B_{2,\tau_i},B_{3,\tau_i},W_{2,\tau_i}}\left[\ell_{\rm over}^{(k)}\cdot\Pr[E_{\tau_i,ac}]+r^{(k)}\cdot \Pr[E_{\tau_i,rj}]  \right]p(\tau_i)d\tau_i}.\\
\end{align*}
Denoting $g(\tau)=\ell_{\rm over}^{(k)}\cdot\Pr[E_{\tau,ac}]+r^{(k)}\cdot \Pr[E_{\tau,rj}]$, 
% \E_{B_{1,\tau},B_{2,\tau},B_{3,\tau},W_{2,\tau}}\left[\ell_{\rm over}^{(k)}\cdot\Pr[E_{\tau,ac}]+r^{(k)}\cdot \Pr[E_{\tau,rj}]  \right]
we know the expected regret can be written as 
\begin{align*}
    &\quad\E_{\Fii^{(k)}}\brk{\pow(\HO|\Fii^{(k)}) - \pow(\pi|\Fii^{(k)})}\\&= \E_{\Fii^{(k)}}\brk{\sum_{\tau_i \le v }\brk{\pow(\mix^{(\tau_{i-1})}|\Fii^{(k)})-\pow(\mix^{(\tau_{i})}|\Fii^{(k)})}}\\
    &=\E_{{\Lambda^{(2,k)}}}\brk{\E_{\tau_{1:{\Lambda^{(2,k)}}}}\brk{\sum_{i=1}^{{\Lambda^{(2,k)}}}\mathbf{1}\{\tau_i\le v\}g(\tau_i)}}\\
    &=\E_{{\Lambda^{(2,k)}}}\brk{{\Lambda^{(2,k)}}\E_{\tau \sim {p_{\lambda}}}\brk{\mathbf{1}\{\tau\le v\}g(\tau)}}\\
    &=\lambda_1\E_{\tau \sim {p_{\lambda}}}\brk{\mathbf{1}\{\tau\le v\}g(\tau)}.
\end{align*}

By plugging Equations \eqref{equ::ac bound} and \eqref{equ::rj bound} into $g(\tau)$, we have
\begin{align}
     &\quad\E_{\Fii^{(k)}}\brk{\pow^{(k)}(\pi_2^{(k)\star}|\Fii^{(k)}) - \pow(\hat\pi_2^{(k)}|\Fii^{(k)})}\nonumber\\
    &=\lambda_1\int_{0}^{v} \left(\ell_{\rm over}^{(k)}\cdot\Pr[E_{u,ac}]+r^{(k)}\cdot \Pr[E_{u,rj}]  \right)p_{\lambda}(u)du\nonumber\\
    &\le {\lambda_1} \int_{0}^{v}\left(\ell_{\rm over}^{(k)}\cdot\exp\left(\frac{-\frac{\alpha^2\lambda_{[u,1]}^2}{2}}{B^{(k)}p(1-p)\int_{u}^{1}\gamma^{(k)}(s) ds+\frac{\alpha\lambda_{[u,1]}}{3}}\right)\right.\nonumber\\
  &\qquad+
  r^{(k)}\cdot\exp\left(\frac{-\frac{\beta^2\lambda_{[u,1]}^2}{2}}{B^{(k)}p(1-p)\int_{u}^{1}\gamma^{(k)}(s) ds+\frac{\beta\lambda_{[u,1]}}{3}}\right)\nonumber\\
    &\qquad+\left.r^{(k)}\cdot\exp\left(-\lambda_{[u,1]}\cdot\left((\alpha+\beta)\log(\alpha+\beta)+1-\alpha-\beta \right)\right)\right)p_{\lambda}(u)du\nonumber.
\end{align}
When we have no further assumptions on $\gamma^{(k)}$, the regret is bounded by
\begin{align*}
   &\quad\E_{\Fii^{(k)}}\brk{\pow^{(k)}(\pi_2^{(k)\star}|\Fii^{(k)}) - \pow(\hat\pi_2^{(k)}|\Fii^{(k)})}\nonumber\\
    &\le \lambda_{[0,v]} \cdot \Bigg(\ell_{\rm over}^{(k)}\cdot\exp\left(\frac{-\frac{\alpha^2\lambda_{[v,1]}^2}{2}}{B^{(k)}p(1-p)+\frac{\alpha\lambda_{[v,1]}}{3}}\right)+r^{(k)}\cdot\exp\left(\frac{-\frac{\beta^2\lambda_{[v,1]}^2}{2}}{B^{(k)}p(1-p)+\frac{\beta\lambda_{[v,1]}}{3}}\right)\nonumber\\
    &\qquad+r^{(k)}\cdot\exp\left(-\lambda_{[v,1]}\cdot\left((\alpha+\beta)\log(\alpha+\beta)+1-\alpha-\beta \right)\right)\Bigg) .\nonumber
\end{align*}
When $\gamma^{(k)}(s)=\lambda^{2,k}(s)/\lambda^{2,k}$, we have
\begin{align*}
   &\quad\E_{\Fii^{(k)}}\brk{\pow^{(k)}(\pi_2^{(k)\star}|\Fii^{(k)}) - \pow(\hat\pi_2^{(k)}|\Fii^{(k)})}\nonumber\\
    &\le \lambda_{[0,v]} \cdot \Bigg(\ell_{\rm over}^{(k)}\cdot\exp\left(\frac{-\frac{\alpha^2\lambda_{[v,1]}^2}{2}}{B^{(k)}\lambda_{[v,1]} p(1-p)+\frac{\alpha\lambda_{[v,1]}}{3}}\right)+r^{(k)}\cdot\exp\left(\frac{-\frac{\beta^2\lambda_{[v,1]}^2}{2}}{B^{(k)}p(1-p)\lambda_{[v,1]} +\frac{\beta\lambda_{[v,1]}}{3}}\right)\nonumber\\
    &\qquad+r^{(k)}\cdot\exp\left(-\lambda_{[v,1]}\cdot\left((\alpha+\beta)\log(\alpha+\beta)+1-\alpha-\beta \right)\right)\Bigg) .\nonumber\\
    &= \lambda_{[0,v]} \cdot \Bigg(\ell_{\rm over}^{(k)}\cdot\exp\left(\frac{-\frac{\alpha^2\lambda_{[v,1]}}{2}}{B^{(k)} p(1-p)+\frac{\alpha}{3}}\right)+r^{(k)}\cdot\exp\left(\frac{-\frac{\beta^2\lambda_{[v,1]}}{2}}{B^{(k)}p(1-p) +\frac{\beta}{3}}\right)\nonumber\\
    &\qquad+r^{(k)}\cdot\exp\left(-\lambda_{[0,v]}\cdot\left((\alpha+\beta)\log(\alpha+\beta)+1-\alpha-\beta \right)\right)\Bigg) .\nonumber
\end{align*}

 By taking $\alpha=\beta$, we conclude our proof.

\begin{remark}[Tighter bound under proportional arrivals]
If the density of Type I customers' arrival time is proportional to the walk-in rate, i.e., $\gamma^{(k)}(s)\propto \lambda^{(2,k)}(s)$, the bound in Theorem \ref{prop:HO_gap} can be tightened to
\begin{align*}
    \begin{aligned}
         \lambda^{(2,k)}_{[0,v]}\cdot \left((\ell_{\rm over}^{(k)}+r^{(k)})\cdot\exp\left(\frac{-\frac{\alpha^2\lambda^{(2,k)}_{[v,1]}}{2}}{\frac{{B^{(k)}}q^{(1,k)}(1-q^{(1,k)})}{\lambda^{(2,k)}}
        +\frac{\alpha}{3}}\right)+ r^{(k)}\cdot\exp\left(-\lambda^{(2,k)}_{[v,1]}\cdot\left(2\alpha\log(2\alpha)+1-2\alpha \right)\right)\right).
  \end{aligned}
    \end{align*}
This tighter bound follows from a more refined concentration inequality when the arrival processes of Type I and Type II customers are synchronized.
\end{remark}

\subsection{Omitted proofs in Appendix \ref{appendix::Stage II}}\label{appendix::omit stage II}

\noindent\textbf{Proof of Lemma \ref{lemma::cond binomial}.}
\begin{proof}
For any nonnegative integers $b_1$, $b_2$ and $b_3$ such that $b_1+b_2+b_3\le B^{(k)}$, given that $B^{(k)}_{1,u}=b_1$ and $B^{(k)}_{2,u}=b_2$, since the customers' check-in decisions are independent of their arriving time, we have 
   \begin{align*}
         &\quad p(B_{3,u}^{(k)}=b_3|B^{(k)}_{1,u}=b_1,B^{(k)}_{2,u}=b_2)\\&= p(B_{3,u}^{(k)}=b_3|B^{(k)}_{1,u}+B^{(k)}_{2,u}=b_1+b_2)\\
         &= p(B_{3,u}^{(k)}=b_3|B^{(k)}-b_1+b_2 \text{ customers arrive after time } u)\\
         &= \binom{B^{(k)}-b_1-b_2}{b_3}p^{b_3}(1-p)^{B^{(k)}-b_1-b_2-b_3}.
   \end{align*}
%     \begin{align*}
%         &\quad p(B_{3,u}^{(k)}=b_3|B^{(k)}_{1,u}=b_1,B^{(k)}_{2,u}=b_2)\\&= p(B_{3,u}^{(k)}=b_3|B^{(k)}_{1,u}+B^{(k)}_{2,u}=b_1+b_2)\\
%         &=\frac{p(B^{(k)}_{1,u}+B^{(k)}_{2,u}=b_1+b_2,B_{3,u}^{(k)}=b_3)}{p(B^{(k)}_{1,u}+B^{(k)}_{2,u}=b_1+b_2)}\\
% &=\frac{p(B^{(k)}_{1,u}+B^{(k)}_{2,u}+B_{3,u}^{(k)}=b_1+b_2+b_3)p(B^{(k)}_{1,u}+B^{(k)}_{2,u}=b_1+b_2|B^{(k)}_{1,u}+B^{(k)}_{2,u}+B_{3,u}^{(k)}=b_1+b_2+b_3)}{p(B^{(k)}_{1,u}+B^{(k)}_{2,u}=b_1+b_2)}\\
%         &=\frac{C_{B}^{b_1+b_2+b_3}p^{b_1+b_2+b_3}(1-p)^{B-b_1-b_2-b_3}C_{b_1+b_2+b_3}^{b_1+b_2}(u/U)^{b_1+b_2}(1-u/U)^{b_3}}{C_{B}^{b_1+b_2}(up/U)^{b_1+b_2}(1-up/U)^{B-b_1-b_2}}\\
%         &=C_{B-b_1-b_2}^{b_3}\frac{\brk{(1-u/U)p}^{b_3}(1-p)^{B-b_1-b_2-b_3}}{(1-up/U)^{B-b_1-b_2}}\\
%         &=C_{B-b_1-b_2}^{b_3}p^{b_3}(1-p)^{B-b_1-b_2-b_3}.
%     \end{align*}
%     Here in the fourth equality, we invoke the following three facts:
%     \begin{itemize}
%         \item $p(B^{(k)}_{1,u}+B^{(k)}_{2,u}+B_{3,u}^{(k)}\sim {\rm Binomial}(B,p)$.
%         \item $p(B^{(k)}_{1,u}+B^{(k)}_{2,u}|B^{(k)}_{1,u}+B^{(k)}_{2,u}+B_{3,u}^{(k)}\sim {\rm Binomial}(B^{(k)}_{1,u}+B^{(k)}_{2,u}+B_{3,u}^{(k)},u/U)$.
%         \item $p(B^{(k)}_{1,u}+B^{(k)}_{2,u}\sim {\rm Binomial}(B,up/U)$.
%     \end{itemize}
    Thus, we have $p(B_{3,u}^{(k)}|B^{(k)}_{1,u},B^{(k)}_{2,u})\sim {\rm Binomial}(B^{(k)}-B^{(k)}_{1,u}-B^{(k)}_{2,u},q^{(1,k)})$. The proof is complete.
\end{proof}

\section{Proofs for the Nested Extension}\label{appendix:nested}

We prove the nested multi-class extension (Theorem \ref{thm:nested_regret}) following the same three-part approach as the single-class case: Stage I concentration, decoupling into single-day problems, and Stage II sensitivity analysis. The key difference is that we must handle $m$ nested protection levels simultaneously, requiring union bounds across all levels and aggregate estimators that pool demand across classes.

We define the aggregate set of classes $S_j = \{1, \dots, j\}$. Let the aggregate variables be denoted by the superscript $(\le j)$.

\subsection{Proof of Stage I Regret}

\begin{proof}
We first show that the \textbf{N-DASS-I} policy ensures that the aggregate number of accepted reservations for any hierarchy level $j$ does not exceed the estimated capacity $\hat{C}_j^{(k)}$ with high probability.

\subsubsection{Concentration of Aggregate Demand}

Consider the aggregate accepted reservations at time $t$ for set $S_j$, denoted $B_t^{(\le j, k)} = \sum_{i=1}^j B_t^{(i,k)}$. The final confirmed count from this set is $B^{(\le j, k)}$.

Conditioned on the state at time $t$, the number of reservations from class $i$ that will explicitly confirm (not cancel) is a sum of independent Bernoulli trials. Therefore, the aggregate future confirmation is a sum of independent, but not necessarily identical, Bernoulli trials.

Let $Z^{(\le j, k)}$ be the realized number of non-cancelled reservations from the pool $S_j$. We have:
\[
\mathbb{E}[Z^{(\le j, k)} \mid \mathcal{F}_t] = \sum_{i=1}^j p^{(i,k)}(t) B_t^{(i,k)} = \mu_t^{(\le j, k)}.
\]
The variance accounts for independent cancellation events across all $j$ classes:
\[
\text{Var}(Z^{(\le j, k)} \mid \mathcal{F}_t) = \sum_{i=1}^j B_t^{(i,k)} p^{(i,k)}(t) (1 - p^{(i,k)}(t)) = \sigma_t^{2(\le j, k)}.
\]

We invoke the one-sided Bernstein inequality \eqref{equ::bounoulli tail}. For any level $j \in [m]$:
\begin{align}
    \Pr\left( Z^{(\le j, k)} > \mu_t^{(\le j, k)} + \frac{\iota (1-\bar{p}_t^{(\le j, k)})}{3} + \sqrt{2\iota \sigma_t^{2(\le j, k)} + \left(\frac{\iota (1-\bar{p}_t^{(\le j, k)})}{3}\right)^2 } \right) \le \exp(-\iota). \label{eq:nested_concentration}
\end{align}

By the definition of the threshold $\hat{B}_t^{(\le j, k)}$ in \eqref{eq:nested_B}, the RHS of the inequality inside the probability is exactly $\hat{B}_t^{(\le j, k)}$.

The algorithm accepts a reservation only if $\hat{B}_t^{(\le j, k)} < \hat{C}_j^{(k)}$. Thus, conditioned on the algorithm's stopping time for class $j$ (the last time a class $i \le j$ was accepted), we have:
\begin{equation} \label{eq:nested_overbooking_prob}
    \Pr\left( B^{(\le j, k)} > \hat{C}_j^{(k)} \right) \le \exp(-\iota).
\end{equation}

Applying a union bound over all $j \in [m]$, the probability that the capacity constraint is violated for \textit{any} nested protection level is bounded by $m \exp(-\iota)$.

\subsubsection{Idling Loss Analysis}

We now address the idling loss. The proof follows the single-class analysis (Appendix \ref{appendix::idling}), with union bounds extending across all $m$ nesting levels.

Suppose the algorithm rejects a class $j$ customer. This implies that for some level $\ell \in \{j, \dots, m\}$, the safety buffer was full: $\hat{B}_t^{(\le \ell, k)} \ge \hat{C}_\ell^{(k)}$.

Under the \textbf{Multi-class Busy Season} Assumption \ref{assump:nested_busy}, the aggregate walk-in rate $\lambda^{(2, \le \ell, k)}$ is sufficiently large. Specifically:
\[
\lambda^{(2, \le \ell, k)} \ge \frac{\delta C_\ell^{(k)}}{q_{\min}} + c_1 \iota + c_2 \sqrt{C_\ell^{(k)} \iota}.
\]

Following the derivation in \eqref{equ::Requirements on lambda}, this ensures aggregate walk-ins from classes $1 \dots \ell$ fill the gap between the conservative cap $\hat{C}_\ell^{(k)}$ and the physical level $C_\ell^{(k)}$ with probability $1 - O(\exp(-\iota))$.

Therefore, with high probability, either:
\begin{enumerate}
    \item No reservations were rejected, or
    \item If reservations were rejected due to a bottleneck at level $\ell$, the walk-ins from level $\ell$ (and higher) fully utilized the capacity $C_\ell^{(k)}$.
\end{enumerate}

The total Stage I regret is the sum over all classes and days:
\begin{align}
    \mathbb{E}[\text{Stage I Regret}] &\le \sum_{k=1}^T \left( (1 - P(E_{\text{good}})) \cdot \sum_{j=1}^m (\mathcal{L}_j^{(k)} + r_j^{(k)}) \right) \nonumber \\
    &\le T \cdot m \cdot \exp(-\iota) \cdot \text{Cost}_{\max} + C_0.
\end{align}
\end{proof}

\subsection{Proof of Decoupling (Generalization of Lemma \ref{lemma::decoupling})}

We must show that the global offline optimal policy $\pi^\star$ can be approximated by the sequence of single-day offline policies $\hat{\pi}_2^{(k)\star}$ subject to the nested capacities $\tilde{C}_j^{(k)}$.

\subsubsection{Geometric Duration Case}

Recall that under the geometric duration assumption with parameter $q$, the departure process is memoryless. The state of the system at the start of Day $k$ is fully described by the total number of occupied rooms $O_{k-1}$.

In the nested setting, the specific class mix of the $O_{k-1}$ occupied rooms does not affect the capacity dynamics of Day $k$, provided that all classes share the same duration parameter $q$ (or $d$).

The nested capacities $\tilde{C}_j^{(k)}$ defined in \eqref{equ::allocated capacity} (generalized for each $j$) represent the residual capacity available for class set $S_j$ after accounting for the carry-over from previous days.

Since the \textbf{N-DASS-I} policy ensures $B^{(\le j, k)} \le \hat{C}_j^{(k)}$ with high probability, and the busy season assumption ensures the ``safe'' capacity is utilized, the single-day optimizer $\hat{\pi}_2^{(k)\star}$ which solves:
\[
\max \sum_{i=1}^m r_i^{(k)} (z^{(1,i,k)} + z^{(2,i,k)}) \quad \text{s.t.} \quad \sum_{i=1}^j (z^{(1,i,k)} + z^{(2,i,k)}) \le \tilde{C}_j^{(k)} \quad \forall j \in [m]
\]
is equivalent to the global optimizer acting on the residual capacity. The error probability is bounded by the failure of the concentration bounds, i.e., $O(m T \exp(-\iota))$.

\subsubsection{Constant Duration Case}

For constant duration $d$, the logic remains identical to the single-class case. The constraint is simply that we cannot accept more than $C/d$ new customers (aggregate across all classes) per day. The nested structure applies to the selection of \textit{which} customers constitute that $C/d$ quota. The loss $C_0$ accounts for the startup phase.

\subsection{Proof of Stage II Regret (Theorem \ref{thm:nested_regret})}

We analyze the regret for the \textbf{N-DASS-II} policy against the single-day offline benchmark derived above.

Let $\mathcal{P}^{(k)}$ be the revenue of a policy on Day $k$. Decomposing the regret as in the single-class case:
\[
\mathcal{P}(\pi^{\star}_k) - \mathcal{P}(\hat{\pi}_2) = \sum_{j=1}^m \sum_{\tau \in \text{Arrivals}_j} \left( \text{Loss due to wrong decision for class } j \text{ at } \tau \right).
\]

A wrong decision for class $j$ at time $u$ occurs if the estimator $\hat{N}_u^{(\le \ell, k)}$ misleads the algorithm regarding the constraint $\tilde{C}_\ell^{(k)}$ for some $\ell \ge j$.

\subsubsection{Wrong Acceptance}

The policy wrongly accepts a class $j$ walk-in if $\forall \ell \ge j, \hat{N}_u^{(\le \ell)} < \tilde{C}_\ell$, but in reality, the realized demand violates a constraint.

Let $E_{u, ac}^{(j)}$ be the event of wrong acceptance for class $j$. This implies that for some critical nesting level $\ell^* \ge j$, the aggregate demand was underestimated.

A wrong acceptance occurs when our estimator underestimates total demand. We bound this probability by summing over all $m-j+1$ protection levels that class $j$ affects:
\begin{align}
    \Pr(E_{u, ac}^{(j)}) &\le \sum_{\ell=j}^m \Pr\left( \text{Underestimation of } S_\ell \text{ demand} \right) \nonumber \\
    &\le \sum_{\ell=j}^m \Pr\left( Z^{(\le \ell)} - \mathbb{E}[Z^{(\le \ell)}] \ge \alpha_\ell \int_u^1 \lambda^{(2, \le \ell)}(s) ds \right).
\end{align}

Using Bernstein's inequality \eqref{equ::bounoulli tail} for the reservations and Poisson bounds \eqref{equ::poisson tail lower}--\eqref{equ::poisson proportion upper} for the walk-ins (similar to the analysis leading to \eqref{equ::ac bound}), each term is bounded by:
\[
\exp\left( -c \cdot \frac{(\alpha_\ell \lambda^{(2, \le \ell)}_{[u,1]})^2}{\text{Var}(Z^{(\le \ell)}) + \alpha_\ell \lambda^{(2, \le \ell)}_{[u,1]}} \right).
\]

Under Assumption \ref{assump:nested_busy}, $\lambda^{(2, \le \ell)}$ scales with $\iota$. Thus, the probability decays exponentially with $\iota$.

\subsubsection{Wrong Rejection}

The policy wrongly rejects a class $j$ walk-in if $\exists \ell \ge j$ such that $\hat{N}_u^{(\le \ell)} \ge \tilde{C}_\ell$, but actual capacity existed.

Similar to \eqref{equ::rj bound}, this corresponds to an overestimation of demand:
\begin{align}
    \Pr(E_{u, rj}^{(j)}) &\le \sum_{\ell=j}^m \Pr\left( Z^{(\le \ell)} - \mathbb{E}[Z^{(\le \ell)}] + \text{WalkInError} \le -(1-\alpha_\ell) \int_u^1 \lambda^{(2, \le \ell)}(s) ds \right).
\end{align}

This is bounded by $\sum_{\ell=j}^m \exp\left( -c' \lambda^{(2, \le \ell)}_{[u,1]} \right)$ for some constant $c'>0$.

\subsubsection{Aggregating the Regret}

The total expected regret for Stage II is the integral of these error probabilities over the arrival rate of walk-ins. Let $\text{Cost}_j = \max(\ell_{\text{over}, j}^{(k)}, r_j^{(k)})$ be the maximum per-customer cost for class $j$. We have:
\begin{align}
    \mathbb{E}[\mathcal{R}_2^{(k)}] &= \sum_{j=1}^m \int_0^v \lambda^{(2,j,k)}(u) \left( \ell_{\text{over}, j}^{(k)} \Pr(E_{u, ac}^{(j)}) + r_j^{(k)} \Pr(E_{u, rj}^{(j)}) \right) du \nonumber \\
    &\le \sum_{j=1}^m \lambda^{(2,j,k)}_{[0,v]} \cdot \text{Cost}_j \cdot m \cdot \exp(-\iota).
\end{align}

Since $\iota = \Theta(\log(CTm))$, the term $m \exp(-\iota)$ ensures the regret is $O(1/T)$ per day (scaling appropriately to be constant over $T$).

Specifically, if we choose $\iota \ge \log(mCT)$, the total regret sums to $O(1)$.

\subsubsection{Sensitivity to Confirmation Timing ($v$)}

The error probabilities depend on $\lambda^{(2, \le \ell)}_{[v,1]}$, the volume of walk-ins \textit{after} the confirmation call.

For the error to be small, we require:
\[
(1-v) \min_{\ell} \lambda^{(2, \le \ell)} \gtrsim \iota,
\]
where $\iota \ge \log(mCT)$.

This generalizes Eq. \eqref{equ::require Stage II call timing}. Since $\lambda^{(2, \le \ell)}$ is increasing in $\ell$, the bottleneck is usually the lowest aggregate volume that has a binding constraint.

Because the regret decays exponentially with $(1-v)\lambda$, we retain the result that confirmation can be delayed until $1-v \approx O\left( \frac{\log(mCT)}{\min_\ell \lambda^{(2,\le \ell)}} \right)$.

\end{APPENDICES}

\end{document}